\newtheorem{thm}{Theorem}[section]
\newtheorem{cor}[thm]{Corollary}
\newtheorem{lem}[thm]{Lemma}
\newtheorem{prop}[thm]{Proposition}
\theoremstyle{definition}
\newtheorem{defn}[thm]{Definition}
\newtheorem{exam}[thm]{Example}
\newtheorem{rem}[thm]{Remark}
\newcommand{\al}{\alpha}
\newcommand{\lra}{\longrightarrow}
\newcommand{\ga}{\gamma}
\newcommand{\Ga}{\Gamma}
\newcommand{\ra}{\rightarrow}
 \newcommand{\A}{\mathcal A}
\newcommand{\C}{\mathcal C}
\newcommand{\D}{\mathcal D}
\newcommand{\E}{\mathcal E}
\newcommand{\I}{\mathcal I}
\newcommand{\J}{\mathcal J}
\renewcommand{\P}{\mathcal P}
\newcommand{\R}{\mathcal R}
\newcommand{\HT}{\mathcal T}
\newcommand{\X}{\mathcal X}
\newcommand{\Y}{\mathcal Y}
\renewcommand{\dim}{\mathsf{dim}}
\DeclareMathOperator*{\add}{\mathsf{add}}
\DeclareMathOperator{\CoGh}{\mathsf{CoGh}}
\DeclareMathOperator{\End}{\mathsf{End}}
\DeclareMathOperator{\Gh}{\mathsf{Gh}}
\DeclareMathOperator{\Hom}{\mathsf{Hom}}
 \DeclareMathOperator*{\id}{\mathsf{id}}
  \DeclareMathOperator*{\ind}{\mathsf{ind}}
 \DeclareMathOperator*{\Irr}{\mathsf{Irr}}
\DeclareMathOperator*{\Mor}{\mathsf{Mor}}
\DeclareMathOperator*{\Ob}{\mathsf{Ob}}
 \DeclareMathOperator*{\smod}{\!-\mathsf{mod}}
\title[Ideal mutations]{Ideal mutations in triangulated categories and generalized Auslander-Reiten theory$^\text{~\ding{73}}$}
\author[]{Yaohua Zhang* and Bin Zhu}
\thanks{* Corresponding author.}
\thanks{$^\text{\ding{73}}$ Bin Zhu is supported by the National Natural Science Foundation of China (Grant Nos. 12371034  and  12031007).}
\thanks{Yaohua Zhang. Hubei Key Laboratory of Applied Mathematics, Faculty of Mathematics and Statistics, Hubei University, Wuhan, 430062, People's Republic of China {\em E-mail:}2160501008@cnu.edu.cn}
\thanks{Bin Zhu. Department of  Mathematical Sciences, Tsinghua University, Beijing 100048, People's Republic of China.  {\em E-mail:} zhu-b@mail.tsinghua.edu.cn}
\keywords{Auslander-Reiten theory, ideal mutation, ideal approximation theory, Auslander-Reiten quiver, Auslander-Reiten sequence}
\subjclass[2020]{Primary: 16G70, 18G80; Secondary:16N20}
\begin{document}
\begin{abstract}
We introduce the notion of ideal mutations in a triangulated category, which generalizes the version of Iyama and Yoshino \cite{iyama2008mutation} by replacing approximations by objects of a subcategory with approximations by morphisms of an ideal. As applications, for a $\Hom$-finite Krull-Schmidt triangulated category $\HT$ over an algebraically closed field $K$. (1) We generalize a theorem of Jorgensen \cite[Theorem 3.3]{jorgensen2010quotients} to a more general setting;  (2) We provide a method to detect whether $\HT$ has Auslander-Reiten triangles or not by checking the necessary and sufficient conditions on its Jacobson radical $\J$:  (i) $\J$ is functorially finite, (ii) $\Gh_\J=\CoGh_\J$, and (iii) $\Gh_\J$-source maps coincide with  $\Gh_\J$-sink maps; (3) We generalize the classical Auslander-Reiten theory by using ideal mutations.
\end{abstract}
\maketitle
\tableofcontents
\section{Introduction}
 Auslander-Reiten theory is a fundamental tool for the representation theory of Artin algebras \cite{auslander1995representation}. This theory has much influence on algebraic geometry and algebraic topology \cite{auslander1986almost, jorgensen2008calabi}. An analog theory for triangulated categories was introduced by Happel \cite{happel1988}. In this theory,  the concept Auslander-Reiten sequence, introduced firstly by Auslander and Reiten \cite{auslander1975representation}, plays a crucial role.

The idea of mutations in triangulated categories was introduced by Iyama and Yoshino \cite{iyama2008mutation} to classify rigid Cohen-Macaulay modules.  The method to make mutations is to do approximations by a subcategory. This kind of approximations is a special case of ideal approximations which was first introduced by Fu, Guil Asensio, Herzog and Torrecillas \cite{fu2013ideal}. In the ideal approximation theory, the role of the objects and subcategories in classical
approximation theory is replaced by morphisms and ideals of the category, and basic results, such as Salce's Lemma, Christehsen's Lemma and Wakamatsu's Lemma, have corresponding versions in this more general setting \cite{fu2016powers}.
There are many interesting examples of non-object ideal approximations, such as the phantom ideal approximations introduced by Herzog \cite{herzog2007phantom} and the $\P$-null approximations concerning a class $\P$ of objects by Christensen \cite{christensen1998ideals}. Indeed, the well-known source and sink maps are left and right approximations of the Jacobson radical ideal, respectively.

In the present paper, we will introduce the notion of ideal mutations of triangulated categories via approximations by morphisms from an ideal. It is a generalization of the version of Iyama and Yoshino \cite{iyama2008mutation} (\cite{liu2013triangulated} or \cite{zhou2018triangulated}). In this new framework, some known results can be generalized, including a Jorgensen's theorem \cite[Theorem 3.3]{jorgensen2010quotients} (or \cite[Corollary 4.4]{zhou2018triangulated}). As its applications and advantages over the mutations of Iyama and Yoshino { we can mention} an equivalent characterization of a triangulated category having Auslander-Reiten triangles by checking its Jacobson radical. Then, we will use the notion of ideal mutations to generalize the classical Auslander-Reiten theory of triangulated categories, this helps us have a better understanding of the Auslander-Reiten theory.

To state our results precisely, let us briefly introduce some terminology.
Let $K$ be an algebraically closed field and $\HT$  a $\Hom$-finite  Krull-Schmidt triangulated $K$-category. Let $\I$ be an ideal of $\HT$. By $\I[1]$ we denote the shift ideal $\{f[1]~|~f\in\I\}$ and by $\Ob(\I)$  we denote the full subcategory of $\HT$ with objects $X$ satisfying $\id_X\in\I$. We define $\Gh_\I=\{f\in\Mor \HT~|~fi=0, \forall i\in\I\}$ and $\CoGh_\I=\{f\in\Mor \HT~|~if=0, \forall i\in\I\}$, they are both ideals of $\HT$. A morphism $f:X\to Y$ is a {\em right $\I$-approximation} of $Y$ if $f\in\I$ and each $i:M\to Y$ in $\I$ factors through $f$. A {\em left $\I$-approximation} is defined dually. An ideal $\I$ is {\em functorially finite} if each object has both right and left $\I$-approximations. $\I$ is called an {\em Auslander-Reiten ideal} if $(\HT, \HT)$ is  an {\em $\I$-mutation pair} (see Definition~\ref{defn: I mutation pair}), that is, for $X\in\HT$ there exist triangles
$$X\stackrel{f}\lra M\stackrel{g}\lra Y\stackrel{h}\lra X[1]~\text{and}~Z\stackrel{u}\lra N\stackrel{v}\lra  X\stackrel{w}\lra X[1]$$
such that $f, u$ are left $\I$-approximations and $g, v$ are right $\I$-approximations. Our first main result is to give a characterization of Auslander-Reiten ideals.

\begin{thm}[Theorem~\ref{thm:char of AR ideal}]
  Let $\I$ be a functorially finite ideal in $\HT$. Then $\I$ is an Auslander-Reiten ideal if and only if
  \begin{enumerate}
    \item $\Gh_\I=\CoGh_{\I[1]}$, and moreover
    \item  for $X\in\ind\HT\setminus\Ob\I$, each $\Gh_\I$-source map of $X$ is a  $\Gh_\I$-sink map and each $\Gh_\I$-sink map of $X[1]$ is a $\Gh_\I$-source map.
  \end{enumerate}
\end{thm}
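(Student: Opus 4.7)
The plan hinges on a preliminary observation established earlier in the paper: in any triangle $X \xrightarrow{f} M \xrightarrow{g} Y \xrightarrow{h} X[1]$, if $f$ is a left $\I$-approximation then $h \in \CoGh_{\I[1]}$ (since $f \circ h[-1] = 0$ and each $\I$-morphism out of $X$ factors through $f$), and dually if $g$ is a right $\I$-approximation then $h \in \Gh_\I$. Both directions of the theorem pivot on combining this observation with the hypotheses.

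For the only if direction, assume $\I$ is an Auslander-Reiten ideal. To prove (1), take $\phi \in \Gh_\I$ with source $A$ and apply the right mutation triangle $Z \xrightarrow{u} N \xrightarrow{v} A \xrightarrow{w} Z[1]$ at $A$. Since $\phi v = 0$, the long exact sequence yields $\phi = \phi'' w$ for some $\phi''$; and since $w \in \CoGh_{\I[1]}$ by the preliminary observation and $\I[1]$ is an ideal, one gets $j \phi = (j \phi'') w = 0$ for all $j \in \I[1]$, hence $\phi \in \CoGh_{\I[1]}$. The reverse inclusion is symmetric, using the shifted left mutation triangle at the target of the morphism. For (2), assume $X\in\ind\HT\setminus\Ob\I$ and choose the mutation triangles minimally (Krull-Schmidt). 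The map $w$ from the right mutation triangle at $X$ is then a $\Gh_\I$-source map of $X$ (it lies in $\Gh_\I$; any $\phi \in \Gh_\I$ out of $X$ satisfies $\phi v = 0$ and hence factors through $w$; left minimality comes from minimality of $v$) and simultaneously a $\Gh_\I$-sink map of $Z[1]$ (using (1) to verify the right $\Gh_\I$-approximation property). By uniqueness of source and sink maps up to isomorphism, this proves (2)(a); (2)(b) follows analogously from the third map $h$ of the left mutation triangle at $X$.

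For the if direction, assume (1) and (2). When $X \in \Ob\I$, the trivial triangle with $\id_X$ works in both slots. For $X\in\ind\HT\setminus\Ob\I$, take a right $\I$-approximation $v : N \to X$ by functorial finiteness and complete to $Z \xrightarrow{u} N \xrightarrow{v} X \xrightarrow{w} Z[1]$. The preliminary observation gives $w \in \Gh_\I$; hypothesis (1) promotes this to $w \in \CoGh_{\I[1]}$, whence the long exact sequence shows that every $j_0 \in \I$ with source $Z$ factors through $u$. The main obstacle is verifying the membership $u \in \I$: factorization comes for free from (1), but membership does not. Here hypothesis (2) is essential: $w$ is identified as the $\Gh_\I$-source map of $X$, and (2) then forces it to also be a $\Gh_\I$-sink map of $Z[1]$; combining the source and sink properties of $w$ with Krull-Schmidt minimality pins down the factoring morphism of any $j_0 \in \I$ through $u$ to exhibit $u$ itself as an $\I$-morphism. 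The left mutation triangle at $X$ is constructed dually from a left $\I$-approximation of $X$, using the sink-to-source clause of (2) at $X[1]$.
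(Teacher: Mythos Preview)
Your overall architecture mirrors the paper's proof, and the forward direction is fine (indeed, your argument for $\Gh_\I\subseteq\CoGh_{\I[1]}$ by factoring an arbitrary $\phi\in\Gh_\I$ through the third map of a mutation triangle is slightly more direct than the paper's reduction to source maps).

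The problem lies in the backward direction, at exactly the point you flag as the main obstacle. You correctly use hypothesis~(2) to upgrade $w$ from a $\Gh_\I$-source map of $X$ to a $\Gh_\I$-sink map of $Z[1]$. But your concluding sentence---``combining the source and sink properties of $w$ with Krull--Schmidt minimality pins down the factoring morphism of any $j_0\in\I$ through $u$ to exhibit $u$ itself as an $\I$-morphism''---is not an argument. Knowing that every $j_0\in\I$ with domain $Z$ factors through $u$ says only that $u$ is $\I$-monic; it does not force $u\in\I$, and no amount of Krull--Schmidt decomposition of the factoring morphisms will produce that membership.

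What actually closes the gap is the \emph{converse} of your preliminary observation, namely Lemma~\ref{lem:I-appro VS GhI-appro} in the paper: if $h$ is a right $\CoGh_{\I[1]}$-approximation then $f$ is a left $\I$-approximation (in particular $f\in\I$). This converse is not free; its proof uses that $(\CoGh_{\I[1]},\I[1])$ is an ideal torsion pair (Lemma~\ref{lem: determine torsion pair}), so that $\Gh_{\CoGh_{\I[1]}}=\I[1]$. With this in hand, the argument runs cleanly: $w$ is a $\Gh_\I$-sink map, hence by~(1) a $\CoGh_{\I[1]}$-sink map, hence by the converse lemma $u$ is a left $\I$-approximation. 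Your stated preliminary observation records only the forward implication and therefore cannot carry this step.

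A minor point: you should take $v$ to be a \emph{minimal} right $\I$-approximation (an $\I$-sink map), so that $w$ is left minimal and genuinely a $\Gh_\I$-source map; otherwise hypothesis~(2) does not apply.
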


When we restrict on a triangulated category with Serre functor, the first condition is equivalent to $\tau\I=\I$ (see Lemma~\ref{lem:tau inv ideal}). Moreover, if $\I$ is an object ideal, the second condition is true provided the first holds(see Proposition~\ref{prop: D-mutation pair}), so we reprove the main theorem in  \cite[Theorem 3.3]{jorgensen2010quotients} (see also \cite[Corollary 4.4]{zhou2018triangulated}). Consider the natural additive quotient category $\HT/\I$, when $\I$ is an object ideal, it is a triangulated category with the given structure (\cite[Theorem 3.3]{jorgensen2010quotients}or \cite[Theorem 3.13]{zhou2018triangulated}), but the Theorem~\ref{thm:gene quot cat} in the present paper proves that this is no longer true for any non-object ideals.

There is an easy observation that a triangulated category $\HT$ having Auslander-Reiten triangles can be understood as the Jacobson radical $\J$ of $\HT$ being an Auslander-Reiten ideal. So the above theorem provides us a way to detect whether $\HT$ has Auslander-Reiten triangles by checking the necessary and sufficient conditions on its radical ideal $\J$: (1) $\J$ is functorially finite, (2) $\Gh_\J=\CoGh_\J$ and (3) $\Gh_\J$-source maps coincide with  $\Gh_\J$-sink maps.  Our method works well when we know the morphisms better than the triangulated structure.

Moreover, the observation also tells us that the classical Auslander-Reiten theory of $\HT$ is a theory on the radical ideal $\J$. It is natural to ask can this theory be generalized to a general Auslander-Reiten ideal? Our second main result is to answer this question. Indeed,  in our generalized Auslander-Reiten theory, the central concept is the ideal mutation triangle which plays a role of the Auslander-Reiten triangle in Auslander-Reiten theory.  Also, other basic concepts, such as source (resp., sink) maps, irreducible morphisms and valued Auslander-Reiten quivers are replaced by $\I$-source (resp., $\I$-sink) maps, left (right) $\I$-irreducible morphisms and valued $\I$-mutation quivers, respectively. This generalized Auslander-Reiten theory shares much in common with the classical one. A triangle is an {\em $\I$-mutation triangle} if the first morphism is an $\I$-source and the second is $\I$-sink. For convenience,
we give a representative property.

\begin{thm}[Theorem~\ref{thm:I-mutation triangle}]
Let $X\stackrel{u}\to Y\stackrel{v}\to Z\stackrel{w}\to X[1]$
be an $\I$-mutation triangle and $W$ an indecomposable object. Write  $Y=\coprod_{i=1}^{n}Y_i^{m_i}$  with $Y_i$ indecomposable and $Y_i\ncong Y_j$ for $i\neq j$. Then
  \begin{enumerate}
    \item $\Irr^-_\I(X, W)\neq 0$ if and only if $W\cong Y_i$ for some $i$;
    \item $\Irr^+_\I(W, Z)\neq 0$ if and only if $W\cong Y_i$ for some $i$;
    \item $m_i=\dim_K\Irr^-_\I(X, W)=\dim_K\Irr^+_\I(W, Z)$.
  \end{enumerate}
\end{thm}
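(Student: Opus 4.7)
The strategy is to mimic the classical proof of the multiplicity formula for Auslander--Reiten triangles, with the $\I$-source map $u$ and the $\I$-sink map $v$ playing the roles of the left and right almost split morphisms. Since $\I$-source and $\I$-sink maps are dual, it suffices to prove (1) and the first equality in (3); claim (2) and the second equality then follow by the analogous argument applied to $v: Y \to Z$.

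The starting point is the following. Since $u: X \to Y$ is an $\I$-source map, every morphism $f \in \I(X, W)$ admits a factorization $f = g \circ u$ for some $g: Y \to W$. Decomposing $g$ according to $Y = \coprod_i Y_i^{m_i}$ as $g = \sum_{i, j} g_i^{(j)} \circ \pi_i^{(j)}$, one obtains $f = \sum_{i, j} g_i^{(j)} \circ u_i^{(j)}$ where the $u_i^{(j)}: X \to Y_i$ are the components of $u$. For indecomposable $W$ with $W \not\cong Y_i$ for every $i$, each $g_i^{(j)}: Y_i \to W$ lies in the Jacobson radical $\J$, using Krull--Schmidt together with $K$ being algebraically closed. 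Hence $f$ is $\I$-reducible in the left sense, giving $\Irr^-_\I(X, W) = 0$. Conversely, when $W \cong Y_i$, the components $u_i^{(j)}$ themselves supply non-zero classes in $\Irr^-_\I(X, Y_i)$, which establishes (1).

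For the dimension count in (3), I would build a $K$-linear map $\Phi: \Hom_\HT(Y_i^{m_i}, Y_i) \to \Irr^-_\I(X, Y_i)$ by $\Phi(h) = h \circ u_i$ modulo $\I$-reducibles, where $u_i: X \to Y_i^{m_i}$ is the $Y_i$-isotypic component of $u$. Surjectivity follows from the factorization argument above, once one observes that contributions from the summands $Y_j$ with $j \ne i$ land in $\J(Y_j, Y_i)$ and therefore become $\I$-reducible after composition. The target statement is that $\Phi$ descends to an isomorphism from $\Hom_\HT(Y_i^{m_i}, Y_i)/\mathrm{rad}$ onto $\Irr^-_\I(X, Y_i)$. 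Combined with the standard identification $\Hom_\HT(Y_i^{m_i}, Y_i)/\mathrm{rad} \cong K^{m_i}$, which rests on $\End_\HT(Y_i)/\mathrm{rad} \cong K$ by algebraic closure, this yields $\dim_K \Irr^-_\I(X, Y_i) = m_i$.

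The main obstacle is the injectivity of $\Phi$ modulo the radical: namely, that $h \circ u_i$ being $\I$-reducible forces $h \in \mathrm{rad}$. This is exactly where the \emph{minimality} built into the $\I$-source map, as opposed to a general left $\I$-approximation, becomes essential. A non-radical $h$ would, after a suitable change of basis on $Y_i^{m_i}$, contain an isomorphism as a direct summand, and composing with $u_i$ would exhibit $u$ itself as having a split-off summand, contradicting left minimality of the source map. Pinning this argument down carefully in the ideal setting, rather than in the Iyama--Yoshino subcategory setting, is the technical heart of the proof; the rest of the argument then transfers almost verbatim from the classical Auslander--Reiten theory.
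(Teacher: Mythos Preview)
Your overall strategy matches the paper's: both arguments show that the components $u_{ij}$ of the $\I$-source map $u$ form a $K$-basis of $\Irr^-_\I(X,Y_i)$, with spanning established exactly as you describe (factor through $u$, then split endomorphisms of $Y_i$ as scalar plus radical, and observe that cross-terms $Y_j\to Y_i$ for $j\ne i$ lie in $\J$). Your argument for part~(1) is the contrapositive of the paper's, which instead invokes the definition of left $\I$-irreducibility directly: a nonzero class in $\Irr^-_\I(X,W)$ is represented by some $f=gu$ with $u\in\I$, forcing $g$ split epic and hence $W$ a summand of $Y$.

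The one place your sketch is imprecise is the injectivity step. Saying that a non-radical $h$ would ``exhibit $u$ itself as having a split-off summand, contradicting left minimality'' is not quite the mechanism: a component $u_{ij}$ landing in $\J\I$ does not produce a zero summand of $u$. What minimality actually gives you is this: if $hu_i\in\J\I$ then, using that $u$ is a left $\I$-approximation, one can write $hu_i=cu$ with $c\in\J(Y,Y_i)$; the endomorphism $\id_Y-\iota(h p_i-c)$ then fixes $u$ but, computed modulo $\J$, has a singular $i$-block whenever $h\notin\mathrm{rad}$, contradicting left minimality. The paper packages this more cleanly as Corollary~\ref{cor:split irre}: composing an $\I$-source map with any split epimorphism yields a left $\I$-irreducible morphism. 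Applied to the split epic $(k_{i1},\dots,k_{im_i}):Y_i^{m_i}\to Y_i$ with some $k_{ij}\ne 0$, this immediately shows $\sum_j k_{ij}u_{ij}\notin\J\I$, giving linear independence in one line. Both routes ultimately rest on left minimality via Lemma~\ref{lem: source VS irreducible}, but the paper's formulation isolates the reusable fact and avoids the matrix computation.
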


The notations $\Irr^-_\I(X, W)$ and $\Irr^+_\I(W, Z)$ denote the spaces $\I/(\J\I)(X, W)$ and $\I/(\I\J)(W, Z)$, respectively. These spaces have strong relations with $\I$-irreducible morphisms. Indeed, the set $\I\setminus (\J\I)(X, W)$ consists of left irreducible morphisms in $\Hom(X, W)$ and the set $\I\setminus  (\I\J)(W, Z)$ consists of right irreducible morphisms in $\Hom(W, Z)$ (see Lemma~\ref{lem:irred}). From the above theorem, the left and right $\I$-irreducible morphisms appeared in the $\I$-mutation triangle form $K$-bases of $\Irr^-_\I(X, W)$ and $\Irr^+_\I(W, Z)$, respectively. Characterizations of left and right $\I$-irreducible imply that left $\J$-irreducible coincides with right $\J$-irreducible (see Corollary~\ref{cor:J-irreducible}), that is the reason why the classical Auslander-Reiten theory doesn't distinguish between left and right irreducible.

The contents of this paper are organized as follows.
In Section~\ref{sec:preliminaries}, we recall and introduce some basic notation, such as ideal approximations, ghost ideals, ideal torsion pairs and mutation pairs in triangulated categories. Also, some technical results are provided for the preparation of later sections.
In Section~\ref{sec:ideal mutation}, we introduce the notion of ideal mutation pairs in triangulated categories and study their basic properties including a method of detecting Auslander-Reiten ideals. In particular, a method of judging a triangulated category having Auslander-Reiten triangles is given. Furthermore, a discussion on additive quotient categories is given in this section.
In Section~\ref{sec:geme AR theory}, we use the ideal mutation theory to generalize the classical Auslander-Reiten theory. Here, the notion of Auslander-Reiten triangles, a central concept in the classical Auslander-Reiten theory, is replaced by ideal mutation triangles. In Section~\ref{sec:examples}, we give various examples to explain our results, including examples of non-object Auslander-Reiten ideals in triangulated categories, detecting whether a triangulated category has Auslander-Reiten triangles or not and ideal mutation quivers of triangulated categories.
\bigskip

{\bf Conventions.} In this paper, the notion $K$ is assumed to be an algebraic closed field and $\HT$ is assumed to be a $\Hom$-finite Krull-Schmidt triangulated $K$-category. Denote the Jacobson radical of $\HT$ by $\J$ . We assume that a subcategory is closed under isomorphisms, direct sums and direct summands. The composition of $f\in\Hom(X, Y)$ and $g\in\Hom(Y, Z)$ is defined by $gf\in\Hom(X, Z)$. Denote the class morphisms of $\HT$ by $\Mor(\HT)$. An {\em ideal}
$\I$ of $\HT$ is an additive subgroup $\I(X, Y)$ of $\Hom(X, Y)$ such that $hgf\in\I(W, Z)$ for any $f\in\Hom(W, X), g\in\I(X, Y)$ and $h\in\Hom(Y, Z)$ { (or equivalently, $\I(-, -)$ is a sub-bifunctor of $\Hom(-,-)$)}.  Let $\R$ be an another
 ideal of $\HT$, the product $\I\R$ of $\I$ and $\R$ denotes the ideal $\{\Sigma_{k=1}^{n}r_ki_k~|~r_k\in\R, i_k\in\I, n\in\mathbb{N}\}$.  Throughout this paper, $\I$ is assumed to be an ideal of $\HT$.

\section{Preliminaries}\label{sec:preliminaries}
\subsection{Ideal approximations}
Let $T\in\HT$. A morphism $i: X\to T$ in $\I$ is called a {\em right $\I$-approximation} { of $T$} if any other morphism $j: X'\to T$ in $\I$ factors through $i$. Dually, we define {\em left $\I$-approximations}. An ideal $\I$ is called {\em contravariantly(resp., covariantly) finite} if for any object $X\in\HT$, there is a right (resp., left) $\I$-approximation for $X$. We call $\I$ {\em functorially finite} if $\I$ is not only contravariantly finite but also covariantly finite. A map $f: X\to T$ is called {\em $\I$-epic} (resp., $\I$-monic) if each $i: I\to T$ in $\I$ factors through $f$ (resp., each $j: X\to I$ in $\I$ factors through $f$). Obviously, an $\I$-epic (resp., $\I$-monic)  map $f$ is an right (resp., left) $\I$-approximation if and only if $f\in\I$. By $\Ob(\I)$  we denote the full subcategory of $\HT$ with objects $X$ satisfying $\id_X\in\I$. { All definitions in $\HT$ above have equivalent characterizations in the functor category $(\HT^{\mathsf{op}}, \mathsf{Ab})$, the category of contravariant functors from $\HT$ to the category of abelian groups.  A functor $F$ is
said to be {\em finitely generated} if there exists an epimorphism $\Hom(–,T) \twoheadrightarrow F$ for some object $T\in\HT$.

\begin{prop}
 An ideal $\I$ in $\HT$ is contravariantly finite if and only if for each $T\in\HT$ the functor $\I(-, T)$ is a finitely generated functor in $(\HT^{\mathsf{op}}, \mathsf{Ab})$.
\end{prop}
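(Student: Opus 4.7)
The plan is to apply the Yoneda Lemma to translate the existence of a right $\I$-approximation into surjectivity of a natural transformation out of a representable functor. Since $\I(-,T)$ is a subfunctor of $\Hom(-,T)$, it takes values in abelian groups, and being finitely generated means precisely that there is some $X\in\HT$ and a natural epimorphism $\Hom(-,X)\twoheadrightarrow \I(-,T)$.

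First I would recall that, by Yoneda, natural transformations $\eta:\Hom(-,X)\to \I(-,T)$ correspond bijectively to elements $i\in\I(X,T)$ via $i=\eta_X(\id_X)$; under this correspondence, the component $\eta_W$ sends $f\in\Hom(W,X)$ to $i\circ f$. Note $i\circ f\in\I(W,T)$ since $\I$ is an ideal, so the assignment indeed lands in $\I(-,T)$.

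Next I would observe that the natural transformation $\eta$ is an epimorphism in $(\HT^{\op},\mathsf{Ab})$ if and only if each component $\eta_W$ is surjective (epimorphisms in a functor category to $\mathsf{Ab}$ are detected pointwise). Unwinding the definitions, surjectivity of $\eta_W$ for every $W$ says exactly that every $j:W\to T$ belonging to $\I$ factors as $j=i\circ f$ for some $f:W\to X$; this is the defining property of $i$ being a right $\I$-approximation of $T$.

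Combining these two steps, the existence of a right $\I$-approximation $i:X\to T$ is equivalent to the existence of an epimorphism $\Hom(-,X)\twoheadrightarrow \I(-,T)$, i.e., to $\I(-,T)$ being finitely generated. Quantifying over all $T\in\HT$ yields the claimed equivalence. No serious obstacle is anticipated; the only mild subtlety is to remember that the Yoneda-assigned morphism $\eta_W(f)=i\circ f$ automatically lies in $\I$ by the ideal property, which is what makes the correspondence land in the subfunctor $\I(-,T)$ rather than merely in $\Hom(-,T)$.
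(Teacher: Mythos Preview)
Your proposal is correct and follows essentially the same approach as the paper: both use the Yoneda Lemma to identify natural transformations $\Hom(-,X)\to\I(-,T)$ with morphisms $i\in\I(X,T)$, and then observe that such a transformation is an epimorphism precisely when $i$ is a right $\I$-approximation. The only cosmetic difference is that the paper phrases the $(\Leftarrow)$ direction via projectivity of representables in the functor category to produce the factorization, whereas you argue directly from componentwise surjectivity; these are equivalent formulations of the same step.
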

\begin{proof}
$(\Leftarrow)$ Let $T \in \mathcal{T}$. Since $\I(-, T)$ is a finitely generated, there exists an epimorphism
$$
\eta:\Hom(-, X) \twoheadrightarrow \mathcal{I}(-, T) .
$$
This natural transformation may be regarded as a map $\eta:\Hom(-, X) \rightarrow \mathcal{I}(-, T) \subseteq\Hom(-, T)$ so that Yoneda's Lemma implies the existance of $i: X \rightarrow T$ such that $\eta=\Hom(-, i)$. Looking at the $X$ component of this transofrmation shows that $\eta_X\left(\id_X\right)=i \in \mathcal{I}(X, T)$. Because representable objects in $\left(\mathcal{T}^{\mathrm{op}}, \mathrm{Ab}\right)$ are projective, every morphism $i^{\prime}: X^{\prime} \rightarrow T$ in $\mathcal{I}$ yields a morphism in the functor category
$$\xymatrix{
&\Hom(-, X')\ar@{-->}[dl]\ar[d]^{\Hom(-, i')}
\\
\Hom(-, X)\ar@{->>}[r]^{\Hom(-, i)}&\I(-, T)
}$$
that factors as indicated. Thus, $i:X\to T$ is a left $\I$-approximation of $T$. Therefore $\I$ is contravariantly finite.

$(\Rightarrow)$ Let $T\in\HT$ and $i:X\to T$ a left $\I$-approximation of $T$. Then the
$$\Hom(-, X)\stackrel{\Hom(-, i)}{\longrightarrow}\I(-, T)$$
is an epimorphic morphism. This implies that $\I(-, T)$ is finitely generated.
\end{proof}
Similarly, a morphism $i: X \rightarrow T$ is $\mathcal{I}$-epic if the image of the natural transformation $\Hom(-, i):\Hom(-, X) \rightarrow\Hom(-, T)$ in the functor category contains the subfunctor $\mathcal{I}(-, T)$.
}

 Let $\D$ be a subcategory of $\HT$, define
 $$[\D]:=\{f\in \Mor(\HT)~|~f ~\text{factors through some object in}~\D\},$$
  it is obviously an ideal. We call this kind of ideal an {\em object ideal}.
Obviously, an ideal $\I$ is object if and only if $\I=[\Ob\I]$.

A morphism $f:X\to M$ is called {\em left minimal} if it doesn't have a direct summand of the form $0\to M_1(M_1\neq 0)$ (up to an isomorphism). A {\em right minimal morphism} is defined dually. We call a morphism a {\em minimal left (resp., right) $\I$-approximation} if it is both left (resp., right) minimal a and left (resp., right) $\I$-approximation. A minimal left (resp., right) $\I$-approximation is also called an {\em $\I$-source (resp., $\I$-sink) map}.
Let $\A$ be a $\Hom$-finite Krull-Schmidt $K$-category. It is well-known that $f$ is
left (resp., right) minimal if and only if $g$ is isomorphic provided $f=gf$ (resp., $f=fg$) (or see \cite[Lemma~3.4]{plamondon2011cluster}). The proof of the following lemma is direct, we leave it to the reader.

\begin{lem}\label{lem:minimal object}
  Let $\D$ be a subcategory of $\HT$, then
  \begin{enumerate}
    \item if $f:X\to M$ is a minimal left $[\D]$-approximation of $X$, then $M\in\D$;
    \item  if $g:N\to Y$ is a minimal right $[\D]$-approximation of $Y$, then $N\in\D$.
  \end{enumerate}
  \end{lem}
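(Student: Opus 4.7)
The plan is to prove both parts by exhibiting $M$ (respectively $N$) as a direct summand of an object of $\D$, and then invoking the standing convention that subcategories are closed under direct summands.

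For part (1), I would start from the hypothesis that $f\in[\D]$ to get a factorization $f=\beta\alpha$ with $\alpha:X\to D$ and $\beta:D\to M$ for some $D\in\D$. The key observation is that $\alpha$ itself lies in $[\D]$ (it factors through $D$ via $\id_D$), so by the left $[\D]$-approximation property of $f$ there is $\gamma:M\to D$ with $\alpha=\gamma f$. Substituting gives $f=\beta\gamma f$, and then left minimality of $f$ (using the characterization recalled just before the lemma, that $h$ being an isomorphism follows from $f=hf$) forces $\beta\gamma$ to be an isomorphism. Hence $\gamma$ is a split monomorphism and $M$ is a direct summand of $D\in\D$, so $M\in\D$.

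Part (2) is the formal dual: write $g=\beta\alpha$ with $\alpha:N\to D$, $\beta:D\to Y$, $D\in\D$; note $\beta\in[\D]$ and use right $[\D]$-approximation to obtain $\delta:D\to N$ with $\beta=g\delta$; then $g=g\delta\alpha$ and right minimality of $g$ forces $\delta\alpha$ to be an isomorphism, exhibiting $N$ as a direct summand of $D$.

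There is no real obstacle here: the argument is a standard Wakamatsu-style summand-extraction, relying only on the definitions of $[\D]$, left/right $[\D]$-approximation, and the minimality criterion $f=hf\Rightarrow h$ iso already cited from \cite{plamondon2011cluster}. The only point to be careful about is to apply the approximation property to the correct intermediate morphism ($\alpha$ in part (1), $\beta$ in part (2)) rather than to $f$ or $g$ themselves.
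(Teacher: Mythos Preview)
Your argument is correct: factoring $f$ through some $D\in\D$, using the approximation property on the intermediate map $\alpha$, and then invoking left minimality to force $\beta\gamma$ to be an isomorphism is exactly the right idea, and the dual works verbatim for part~(2). The paper itself gives no proof of this lemma (it states that the proof is direct and left to the reader), so there is nothing to compare against; your write-up would serve as a perfectly adequate fill-in.
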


The following lemma seems well-known, but we don't find it in literature. We write a proof here for the convenience of the reader.

\begin{lem}\label{lem:r min VS l min}
  Let
$ X\stackrel{f}\lra M\stackrel{g}\lra Y\stackrel{h}\lra X[1]$
  be a triangle in $\HT$. Then the following are equivalent.
  \begin{enumerate}
    \item $f$ is right minimal;
    \item $g$ is left minimal;
    \item $h$ is a radical morphism.
  \end{enumerate}
\end{lem}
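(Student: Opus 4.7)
The plan is to prove (1)$\Leftrightarrow$(3) and (2)$\Leftrightarrow$(3) directly by exploiting the long exact sequences from the triangle together with the standard characterization of the Jacobson radical in a $\Hom$-finite Krull--Schmidt $K$-category: a morphism $\varphi:Y\to X[1]$ belongs to $\J$ if and only if $\id_{X[1]}-\varphi\alpha$ is invertible for every $\alpha:X[1]\to Y$, equivalently $\id_Y-\alpha\varphi$ is invertible for every such $\alpha$. The two vanishing relations $f\circ h[-1]=0$ and $h\circ g=0$ coming from the triangle will carry the whole argument; no decomposition of the triangle into summands or appeal to matrix forms of $h$ is needed.

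For (1)$\Rightarrow$(3) I would take an arbitrary $\alpha:X[1]\to Y$, set $\phi=h[-1]\circ\alpha[-1]\in\End(X)$, and observe that $f\phi=0$, whence $f(\id_X-\phi)=f$. Right minimality of $f$ then forces $\id_X-\phi$ to be invertible, and shifting by $[1]$ yields that $\id_{X[1]}-h\alpha$ is invertible; since $\alpha$ was arbitrary, $h\in\J$. For (3)$\Rightarrow$(1), starting from $f\psi=f$ one has $f(\psi-\id_X)=0$; the long exact sequence obtained by applying $\Hom(X,-)$ to the rotated triangle $Y[-1]\xrightarrow{-h[-1]}X\xrightarrow{f}M\xrightarrow{g}Y$ produces $s:X\to Y[-1]$ with $\psi-\id_X=-h[-1]\circ s$, and shifting gives $\psi[1]=\id_{X[1]}-h\circ s[1]$; since $h\in\J$ and $\J$ is an ideal, $h\circ s[1]\in\J(X[1],X[1])$, so $\psi[1]$, and therefore $\psi$, is invertible.

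For (2)$\Leftrightarrow$(3) I would run the dual argument using $h\circ g=0$ in place of $f\circ h[-1]=0$. Concretely, for (2)$\Rightarrow$(3) any $\beta:X[1]\to Y$ satisfies $(\beta h)g=\beta(hg)=0$, so $(\id_Y-\beta h)g=g$, and left minimality of $g$ makes $\id_Y-\beta h$ invertible; since $\beta$ was arbitrary, $h\in\J$. For (3)$\Rightarrow$(2), if $\psi g=g$, then $(\psi-\id_Y)g=0$ and the long exact sequence obtained from $\Hom(-,Y)$ gives $\beta:X[1]\to Y$ with $\psi-\id_Y=\beta h$, so $\psi=\id_Y+\beta h$ is invertible because $\beta h\in\J(Y,Y)$. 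I do not expect any substantive obstacle in executing this plan; the only point that demands mild care is keeping track of the sign in the rotated triangle $Y[-1]\xrightarrow{-h[-1]}X$, which affects where a minus sign appears but not the conclusion.
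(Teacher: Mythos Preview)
Your proof is correct, but it takes a somewhat different route from the paper's. The paper first shows (1)$\Leftrightarrow$(2) by a direct-summand argument (if $g=\binom{g'}{0}$ then a split summand of the triangle forces $f$ to have a zero component); for (1)$\Rightarrow$(3) it applies TR3 to the pair $(\id_M,\id_Y-lh)$ to produce a fill-in $k:X\to X$ with $fk=f$, uses right minimality to get $k$ invertible, and then the two-out-of-three property for morphisms of triangles to conclude $\id_Y-lh$ is invertible; for (3)$\Rightarrow$(1) it argues by contrapositive, exhibiting an identity block in a matrix form of $h$ when $f$ is not right minimal. Your argument avoids all summand and matrix manipulations by working entirely with the long exact $\Hom$ sequences and the endomorphism characterization of $\J$, and it obtains (1)$\Leftrightarrow$(2) for free via (3). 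Both approaches ultimately rest on the fact that in a $\Hom$-finite Krull--Schmidt category $\J(A,A)$ is the Jacobson radical of $\End(A)$, so $\id_A-r$ is invertible for every $r\in\J(A,A)$; you invoke this explicitly, the paper implicitly through the minimality characterization.
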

\begin{proof}
(1) $\Rightarrow$ (2)
 Assume $f$ is right minimal. If $g$ isn't left minimal, then $g$ can be written as $\binom{g'}{0}:  M\to Y_1\oplus Y_2$ with $Y_2$ nonzero. Hence $f$ can be written as $(f_1, 0): X_1\oplus Y[-1]\to M$, this contracts to the assumption. Thus $g$ is left minimal. Similarly, we can prove (2) $\Rightarrow$ (1).

 (1) $\Rightarrow$ (3) Let $l: X[1]\to Y$, then we have commutative diagram
 $$\xymatrix{
 X\ar[r]^f\ar@{-->}[d]^{k}
 &M\ar[r]^g\ar@{=}[d]
 &Y\ar[r]^h\ar[d]^{{\id_Y}-lh}
 &X[1]\ar@{-->}[d]^{k[1]}
 \\
 X\ar[r]^f
 &M\ar[r]^g
 &Y\ar[r]^h
 &X[1],
 }$$
since $f$ is right minimal, then { $k$} is isomorphic, and so is $\id_Y-lh$. Thus we know $h$ is a radical morphism.

(3) $\Rightarrow$ (1) If $f$ isn't right minimal, then we can write $f: X=X_1\oplus X_2\stackrel{(f_1, 0)}\to M$ with $X_2\neq 0$. Hence the triangle can be written as
$$\xymatrix{
X_1\oplus X_2\ar[r]^{\quad (f_1, 0)}
& M\ar[r]^{\binom{g_1}{g_2}\quad\quad}
&Y_1\oplus X_2[1]\ar[r]^{\binom{h_1~0~~\quad~}{0~\id_{X_2}}\quad}
&X_1[1]\oplus X_2[1]
}$$
and  $h$ equals $\binom{h_1~0~~\quad~}{0~\id_{X_2}}$ up to an isomorphism. Hence $h$ isn't a radical map, this contracts our assumption. Therefore, $f$ is right minimal.
\end{proof}

\subsection{Ghost ideals}
We define the following two sets of morphisms for $X, Y\in\HT$:
\begin{align*}
   & \Gh_{\I}(X, Y):=\{f\in \Hom(X, Y)~|~fi=0, \forall i\in\I \}, \\
   & \CoGh_{\I}(X, Y):=\{f\in \Hom(X, Y)~|~if=0, \forall i\in\I \}.
\end{align*}

Set $\Gh_{\I}:=\bigcup_{X, Y\in\HT}\Gh_{\I}(X, Y)$ and $\CoGh_{\I}:=\bigcup_{X, Y\in\HT}\CoGh_{\I}(X, Y)$. Obviously, $\Gh_{\I}$ and $\CoGh_{\I}$ are both ideals of $\HT$. Here, we note that $fi=0, \forall i\in\I$ is equivalent to $\Hom_\HT(i, f):=\Hom_\HT(S, f)\circ\Hom_\HT(i, X)=0, \forall i:S\to T\in\I$.
Let $\D$ be a subcategory. Recall that a morphism $f: X\to Y$ is called {\em $\D$-ghost} if $\Hom_\HT(D, f)=0, \forall D\in\D$, dually $f$ is called {\em $\D$-coghost} if $\Hom_\HT(f, D)=0, \forall D\in\D$. Denote the sets of $\D$-ghost morphisms and $\D$-coghost morphisms by $\Gh_{\D}$ and $\CoGh_{\D}$, respectively, one can check easily that $\Gh_{\D}=\Gh_{[\D]}$, $\CoGh_{\D}=\CoGh_{[\D]}$. So our definitions generalize those in  \cite[Section 3.1]{beligiannis2013rigid}.

\begin{lem}\label{lem:ghost-appro}
  Let $C\stackrel{f}\lra A\stackrel{g}\lra B\stackrel{h}\lra C[1]$ be a triangle in $\HT$.  Then
  \begin{enumerate}
    \item $g$ is $\I$-epic if and only if $h\in\Gh_{\I}$. If moreover $g\in\I$, then $g$ is a right $\I$-approximation if and only if $h$ is a left $\Gh_{\I}$-approximation.
    \item $f$ is $\I$-monic if and only if $h\in\CoGh_{\I[1]}$.  If moreover $f\in\I$, then $f$ is a left $\I$-approximation if and only if $h$ is a  right $\CoGh_{\I[1]}$-approximation.
  \end{enumerate}
\end{lem}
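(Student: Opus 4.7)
The plan is to translate the $\I$-epic, $\I$-monic and approximation conditions into statements about composites vanishing and lifts across the triangle, using the long exact Hom sequences. In both parts the ``moreover'' clause follows from the first statement plus the factorization property, so the real content lies in the first equivalence of each part.

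For part (1), I would start from the exact sequence
\[
\Hom(X, A)\xr{g_*}\Hom(X, B)\xr{h_*}\Hom(X, C[1])
\]
obtained by applying $\Hom(X,-)$ to the triangle. This says that a morphism $i:X\to B$ factors through $g$ if and only if $h\circ i=0$. Letting $i$ range over $\I$ (automatically with codomain $B$ for $hi$ to be defined) gives immediately: $g$ is $\I$-epic $\iff$ $h\in\Gh_\I$. For the refinement, assume $g\in\I$. If $g$ is a right $\I$-approximation, then $h\in\Gh_\I$ by what was just shown; for any $h'\in\Gh_\I(B,Y)$ we have $h'\circ g=0$ because $g\in\I$, so by the same exact sequence (with $X=Y$ and using the rotated triangle) $h'$ factors through $h$, showing $h$ is a left $\Gh_\I$-approximation. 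Conversely, if $h$ is a left $\Gh_\I$-approximation then in particular $h\in\Gh_\I$, so $g$ is $\I$-epic and, being in $\I$, a right $\I$-approximation.

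Part (2) is dual. Rotating the triangle and applying $\Hom(-,Y)$ yields
\[
\Hom(A,Y)\xr{f^*}\Hom(C,Y)\xr{(h[-1])^*}\Hom(B[-1],Y),
\]
so $j:C\to Y$ factors through $f$ iff $j\circ h[-1]=0$, equivalently (since $[1]$ is an equivalence) iff $j[1]\circ h=0$. Letting $j$ range over $\I$ translates this to $i\circ h=0$ for all $i\in\I[1]$, i.e. $h\in\CoGh_{\I[1]}$. For the refinement assume $f\in\I$ and use
\[
\Hom(X,B)\xr{h_*}\Hom(X,C[1])\xr{(f[1])_*}\Hom(X,A[1]);
\]
any $h'\in\CoGh_{\I[1]}(X,C[1])$ satisfies $f[1]\circ h'=0$ because $f[1]\in\I[1]$, and therefore lifts across $h$. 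The reverse implication is obtained as in part (1).

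The main obstacle is purely bookkeeping: keeping track of whether compositions are on the left or the right, correctly identifying the connecting map of the rotated triangle as $\pm h[-1]$, and using that the shift is an equivalence so that $j\circ h[-1]=0$ can be rewritten as $j[1]\circ h=0$. Beyond these careful bookkeeping points, the proof is a direct unpacking of the definitions of $\I$-epic, $\I$-monic, $\Gh_\I$ and $\CoGh_{\I[1]}$ against the exact Hom sequences.
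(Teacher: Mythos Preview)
Your proposal is correct and follows the same approach as the paper: translate the factorization conditions into vanishing of composites via the long exact $\Hom$ sequences of the triangle, and use that $g\in\I$ (resp.\ $f[1]\in\I[1]$) forces any $\Gh_\I$-map (resp.\ $\CoGh_{\I[1]}$-map) to kill it. The paper's own proof is terser (part~(2) is dismissed as ``proved dually'' and the converse of the ``moreover'' clause is left implicit), but the argument is identical to yours.
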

\begin{proof}
(1) For any $i\in\I(X, B)$, $i$ factors through $g$  if and only if $hi=0$, this is equivalent to $h\in\Gh_\I$.
Moreover, if $g\in\I$, then for any $j\in\Gh_\I(B, Y)$, we have $jg=0$, hence $j$ factors through $h$, that implies $h$ is a  left $\Gh_{\I}$-approximation.

(2) It is proved { dually}.
\end{proof}

\subsection{Ideal torsion pairs}
A pair $(\I, \R)$ of ideals in $\HT$ is an {\em ideal torsion pair} if
\begin{enumerate}
  \item $\forall i\in\I, r\in\R, \Hom_\HT(i, r)=0$;
  \item For each $T\in\HT$, there is a triangle
  $$\xymatrix{X\ar[r]^f&T\ar[r]^g&Y\ar[r]^h&X[1]},$$
  such that $f\in\I$, and $g\in\R$. We call this triangle a canonical decomposition of $T$ with respect to the ideal torsion pair $(\I, \R)$.
\end{enumerate}

\begin{prop}\label{prop: ideal torsion pair}
  If $(\I, \R)$ is an ideal torsion pair, then
  \begin{enumerate}
    \item $\I=\CoGh_\R$ and $\R=\Gh_\I$.
    \item $\I$ is contravariantly finite and $\R$ is covariantly finite.
  \end{enumerate}
\end{prop}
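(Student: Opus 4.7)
The plan is to extract everything from a single canonical decomposition triangle guaranteed by condition~(2) of the definition, combined with the vanishing condition~(1) and the long exact $\Hom$ sequences associated with a triangle in $\HT$. The two parts of the proposition will feed into each other: condition~(1) gives one inclusion in part~(1) for free, while the decomposition triangle both supplies the reverse inclusion and, by virtue of the characterization just proved, exhibits the desired approximations in part~(2).

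For part~(1), the inclusions $\I\subseteq\CoGh_\R$ and $\R\subseteq\Gh_\I$ follow immediately from condition~(1) of the definition of an ideal torsion pair, since $\Hom_\HT(i,r)=0$ amounts (using that $\I$ and $\R$ are ideals) to $ri=0$ for every composable pair $i\in\I$, $r\in\R$. To get the reverse inclusion $\Gh_\I\subseteq\R$, let $f\colon T\to U$ lie in $\Gh_\I$ and pick a canonical decomposition
$$X\stackrel{a}\lra T\stackrel{b}\lra Y\stackrel{c}\lra X[1]$$
with $a\in\I$, $b\in\R$. Since $fa=0$, the exactness of $\Hom(-,U)$ on this triangle yields $f=gb$ for some $g\colon Y\to U$, whence $f\in\R$ because $\R$ is an ideal. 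Dually, for $f\in\CoGh_\R$ with codomain $V$, apply the decomposition to $V$: from $bf=0$ one obtains $f=a'g'$ with $a'\in\I$, so $f\in\I$.

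For part~(2), fix $T\in\HT$ with canonical decomposition $X\stackrel{a}\lra T\stackrel{b}\lra Y$. I claim $a$ is a right $\I$-approximation of $T$ and $b$ is a left $\R$-approximation of $T$. Both maps already lie in the appropriate ideals by condition~(2). For the universal property of $a$, let $i\colon W\to T$ be an arbitrary morphism in $\I$. By part~(1), $b\in\R=\Gh_\I$, so $bi=0$; the exactness of $\Hom(W,-)$ on the triangle then produces $g\colon W\to X$ with $i=ag$. The argument for $b$ is the mirror image: any $r\colon T\to W'$ in $\R=\CoGh_\I$ (applied symmetrically, or via $\I=\CoGh_\R$) satisfies $ra=0$ and hence factors through $b$.

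The only mild obstacle is making sure condition~(1) of the definition is read correctly so that it lines up with the definitions of $\Gh_\I$ and $\CoGh_\R$; once that translation is made explicit, the rest of the proof is a direct application of the $\Hom$ long exact sequence attached to the canonical decomposition, plus the fact that $\I$ and $\R$ are closed under pre- and post-composition with arbitrary morphisms.
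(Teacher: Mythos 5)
Your proof is correct and follows essentially the same route as the paper: use the canonical decomposition triangle together with the long exact $\Hom$ sequence to obtain the nontrivial inclusions in part (1), then read off the two approximations in part (2) from the two maps of that same triangle. (One small notational slip: when arguing that $b$ is a left $\R$-approximation you wrote $\R=\CoGh_\I$, which is not what part (1) asserts; the fact you actually need is $\I=\CoGh_\R$, or simply condition (1) of the definition, and you do cite that as the alternative, so the argument goes through.)
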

\begin{proof}
(1)
  Let $a\in\CoGh_\R(S, T)$, and  $X\stackrel{f}\to T\stackrel{g}\to Y\stackrel{h}\to X[1]$ a canonical decomposition of $T$ with respect to $(\I, \R)$. Since $ga=0$, so $a$ factors through $f$, this implies that $a\in\I$. Hence $\CoGh_\R\subset \I$. By the definition of ideal torsion pair $\I\subset \CoGh_\R$,  then we have $\CoGh_\R= \I$.

  (2)  For $T\in\HT$, then $T$ has the canonical decomposition as in (1), in this triangle, the morphisms $f$ is a right $\I$-approximation of $T$ and $g$ is a left $\R$-approximation of $T$.
\end{proof}

 Indeed, by checking easily, a pair $(\X, \Y)$ of subcategories of $\HT$ is a torsion pair in the sense of \cite[Definition 2.2]{iyama2008mutation} if and only if $([\X], [\Y])$ is an ideal torsion pair in $\HT$, a pair $(\P, \I)$ of a subcategory $\P$ and an ideal $\I$ is a projective class in the sense of \cite[Section 2]{christensen1998ideals} if and only if $([\P], \I)$ is an ideal torsion pair in $\HT$, a pair $(\I, \R)$ of ideals is an ideal cotorsion pair in the sense of \cite[Example 3.4.2]{breaz2021ideal} if and only if $(\I, \R[1])$ is an ideal torsion pair in $\HT$.

\begin{lem}\label{lem: determine torsion pair}
The following hold.
\begin{enumerate}
  \item If $\I$ is contravariantly finite, then $(\I, \Gh_\I)$ is an ideal torsion pair.
  \item If $\I$ is covariantly finite, then $(\CoGh_\I, \I)$ is an ideal torsion pair.
\end{enumerate}
\end{lem}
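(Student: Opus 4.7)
The plan is to verify the two defining conditions of an ideal torsion pair directly, by taking the triangle in condition (2) to be induced by a right (resp.\ left) $\I$-approximation.

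For part (1), first I would check condition (1) of Definition of ideal torsion pair. Given $i \in \I(A,B)$ and $r \in \Gh_\I(C,D)$, I must show $\Hom_\HT(i,r) = 0$, i.e.\ $r \circ \varphi \circ i = 0$ for every $\varphi : B \to C$. Since $\I$ is an ideal, $\varphi \circ i \in \I(A,C)$, and by the defining property of $\Gh_\I$ we have $r \circ (\varphi i) = 0$. Next, for condition (2), I would fix $T \in \HT$ and use the contravariant finiteness of $\I$ to pick a right $\I$-approximation $f : X \to T$. Completing to a triangle
$$X \stackrel{f}\lra T \stackrel{g}\lra Y \stackrel{h}\lra X[1],$$
I need only show $g \in \Gh_\I$. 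For any $i : S \to T$ in $\I$, since $f$ is a right $\I$-approximation, $i$ factors as $i = f j$ for some $j : S \to X$; then $g i = g f j = 0$ because $g f = 0$. Hence $g \in \Gh_\I$ as required, and we obtain a canonical decomposition.

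Part (2) would follow by the dual argument: for $T \in \HT$, covariant finiteness produces a left $\I$-approximation $g : T \to Y$, which I complete to a triangle
$$X \stackrel{f}\lra T \stackrel{g}\lra Y \stackrel{h}\lra X[1].$$
The ideal property together with the definition of $\CoGh_\I$ yields condition (1), and to see that $f \in \CoGh_\I$ one observes that every $i : T \to S'$ in $\I$ factors as $i = k g$ for some $k$, hence $i f = k g f = 0$.

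The argument is essentially routine once one unpacks the definitions, so I do not anticipate a genuine obstacle; the only delicate point is keeping track of the direction of the approximation and the correct placement of $\I$ versus $\Gh_\I$ (resp.\ $\CoGh_\I$) on the left and right of the triangle. In particular, one should be careful that condition (1) of an ideal torsion pair is the full vanishing $\Hom_\HT(i,r) = 0$ (i.e.\ vanishing for every intermediate morphism), not merely $r i = 0$, but this follows at once because ideals absorb compositions.
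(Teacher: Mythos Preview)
Your proof is correct and follows essentially the same approach as the paper: both take a right (resp.\ left) $\I$-approximation of $T$, complete it to a triangle, and observe that the next morphism lies in $\Gh_\I$ (resp.\ the previous one in $\CoGh_\I$). The paper's version is terser—it leaves the verification of condition~(1) and the reason why $j\in\Gh_\I$ implicit—so your write-up is in fact a fuller account of the same argument.
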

\begin{proof}
We only prove (1), (2) can be proved dually.  Let $T\in\HT$, and $i: I\to T$ a right $\I$-approximation. Extend $i$ to a triangle
$$\xymatrix{
I\ar[r]^{i}
&T\ar[r]^{j}
&J\ar[r]
&I[1].
}$$
Then $j\in\Gh_\I$. Hence $(\I, \Gh_\I)$ is an ideal torsion pair.
\end{proof}

\subsection{Mutations in triangulated categories}
The notation of mutations in triangulated categories was first defined by Iyama and Yoshino \cite{iyama2008mutation}, then a general definition of mutation in extriangulated categories appeared in \cite{liu2013triangulated} (or see \cite{zhou2018triangulated}) which covers the former one. To our interest, we will focus on mutations in triangulated categories.

\begin{defn}(\cite[Definition 2.5]{iyama2008mutation},  \cite[Section 3.2]{liu2013triangulated}, \cite[Definition 3.2]{zhou2018triangulated})\label{defn:object mutation pair}
Let $\D$, $\X$ and $\Y$ be subcategories of $\HT$. The pair $(\X, \Y)$ is called a {\em $\D$-mutation pair} if it satisfies:
\begin{enumerate}
  \item $\D\subset \X\cap \Y$;
  \item For each $X\in\X$, there is a triangle
  $$ X\stackrel{f}\lra M\stackrel{g}\lra Y\stackrel{h}\lra X[1],$$
  where $Y\in\Y$, and $f$ is a left $\D$-approximation, $g$ is a right $\D$-approximation;
  \item For each $Y\in\Y$, there is a triangle
$$ X\stackrel{u}\lra N\stackrel{v}\lra Y\stackrel{w}\lra X[1],$$
  where $X\in\X$, and $v$ is a right $\D$-approximation, $u$ is a left $\D$-approximation.
\end{enumerate}
\end{defn}

\section{Ideal mutations in triangulated categories}\label{sec:ideal mutation}
\subsection{Ideal mutation pairs}\label{subsec: ideal torsion pair}
\begin{defn}\label{defn: I mutation pair}
A pair $(\X, \Y)$ of subcategories of $\HT$ is called an {\em $\I$-mutation pair} if:
\begin{enumerate}
  \item $\Ob{\I}\subset \X\cap \Y$;
  \item For each $X\in\X$, there is a triangle
  $$X\stackrel{f}\lra M\stackrel{g}\lra Y\stackrel{h}\lra X[1],$$
  where $Y\in\Y$, and $f$ is a left $\I$-approximation, $g$ is a right $\I$-approximation;
  \item For each $Y\in\Y$, there is a triangle
$$ X\stackrel{u}\lra N\stackrel{v}\lra Y\stackrel{w}\lra X[1],$$
  where $X\in\X$, and $v$ is a right $\I$-approximation, $u$ is a left $\I$-approximation.
\end{enumerate}
\end{defn}

The following proposition tells us that our definition is a generalization of Definition~\ref{defn:object mutation pair}.

\begin{prop}
  Let $\D$ be a subcategory. Then $(\X, \Y)$ is $\D$-mutation pair if and only if it is a $[\D]$-mutation pair.
\end{prop}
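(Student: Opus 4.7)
The plan is to establish the equivalence by translating between object-level and ideal-level approximations via the defining factorization property of $[\D]$. The proof rests on two preliminary observations: first, since $\D$ is closed under direct summands, $\Ob[\D]=\D$, so condition (1) in Definitions~\ref{defn:object mutation pair} and~\ref{defn: I mutation pair} coincide; second, I would prove the key lemma that for any $f\colon X\to M$ with $M\in\D$, being a left $\D$-approximation in the Iyama--Yoshino sense is equivalent to being a left $[\D]$-approximation, and dually on the right. One implication is immediate since $M\in\D$ forces $f\in[\D]$; the other uses that every $j\in[\D]$ factors as $j_2 j_1$ through some $D\in\D$, whereupon the object-level property applied to $j_1$ yields the desired factorization of $j$.

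The direction ($\Leftarrow$) is then immediate: any $\D$-mutation triangle $X\to M\to Y\to X[1]$ (with $M\in\D$) automatically satisfies the $[\D]$-mutation conditions by the key lemma.

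For ($\Rightarrow$), given a $[\D]$-mutation pair and $X\in\X$, take the $[\D]$-triangle $X\stackrel{f}\lra M\stackrel{g}\lra Y\stackrel{h}\lra X[1]$ with $Y\in\Y$. Using that $\HT$ is $\Hom$-finite Krull--Schmidt, extract a minimal left $[\D]$-approximation $\tilde{f}\colon X\to \tilde{M}$, so that $M\cong \tilde{M}\oplus M_0$ with $f=\binom{\tilde{f}}{0}$. Since $f$ factors through the split inclusion $\tilde{M}\hookrightarrow \tilde{M}\oplus M_0$, the octahedral axiom gives $Y\cong \tilde{Y}\oplus M_0$ (the connecting map of the octahedral triangle vanishes because it factors through a split triangle), and the original triangle decomposes as a direct sum of $\tilde{T}\colon X\stackrel{\tilde{f}}\lra \tilde{M}\stackrel{\tilde{g}}\lra \tilde{Y}\to X[1]$ and a trivial triangle $0\to M_0\stackrel{\id}\lra M_0\to 0$. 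In particular $\tilde{Y}\in\Y$ by closure of $\Y$ under summands. The crucial step is to show $\tilde{M}\in\D$: factor $\tilde{f}=f_2 f_1$ with $f_1\colon X\to D$, $D\in\D$; since $f_1\in[\D]$, it factors through the left $[\D]$-approximation $\tilde{f}$ as $f_1=u\tilde{f}$, so $(f_2 u)\tilde{f}=\tilde{f}$, and left minimality of $\tilde{f}$ forces $f_2 u$ to be an isomorphism in $\End(\tilde{M})$. This exhibits $\tilde{M}$ as a direct summand of $D$, hence $\tilde{M}\in\D$. The key lemma now yields that $\tilde{f}$ is a left $\D$-approximation, and the direct lift $k\mapsto\binom{k}{0}$ of any test morphism shows $\tilde{g}$ is a right $[\D]$-approximation, hence (with $\tilde{M}\in\D$) a right $\D$-approximation. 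Condition (3) follows by the dual argument applied to $Y\in\Y$.

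The main obstacle is the minimality argument that produces $\tilde{M}\in\D$ from a possibly non-minimal $[\D]$-triangle; once this is in hand, everything else reduces to routine applications of the key lemma and closure of $\Y$ under direct summands.
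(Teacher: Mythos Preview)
Your proof is correct and shares the paper's core idea: split off a left-minimal $[\D]$-approximation $\tilde f\colon X\to\tilde M$ and use minimality to force $\tilde M\in\D$ (your argument here is precisely the content of the paper's Lemma~\ref{lem:minimal object}). Note, however, that your direction labels are reversed relative to the statement: what you call ``$(\Leftarrow)$'' proves $\D$-mutation $\Rightarrow$ $[\D]$-mutation, which is the paper's $(\Rightarrow)$, and vice versa.

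In the hard direction the paper takes a shortcut you do not. Rather than passing to the smaller triangle $\tilde T$ and then re-verifying that $\tilde g$ is a right $[\D]$-approximation and that $\tilde Y\in\Y$, the paper observes that in the decomposition $g\cong\left(\begin{smallmatrix}\tilde g&0\\0&\id_{M_0}\end{smallmatrix}\right)$ the hypothesis $g\in[\D]$ forces $\id_{M_0}\in[\D]$, so the complement $M_0$ itself lies in $\D$. Hence the \emph{original} middle term $M=\tilde M\oplus M_0$ is already in $\D$, and the original triangle serves as the required $\D$-mutation triangle with no further checks. Your octahedral decomposition and the subsequent verifications are correct but become unnecessary once this observation is made.
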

\begin{proof}
  $(\Rightarrow)$ By the definitions of ideal mutation pairs (Definition~\ref{defn: I mutation pair}) and mutation pairs (Definition~\ref{defn:object mutation pair}).

  $(\Leftarrow)$ Let $ X\stackrel{f}\ra M\stackrel{g}\ra Y\stackrel{h}\ra X[1]$ be a triangle with $X\in\X, Y\in\Y$, $f$ a left $[\D]$-approximation and $g$ a right $[\D]$-approximation. Write $f$ as $\binom{f'}{0}:X\to M_0\oplus M_1$ with $f'$ left minimal, then the above triangle is isomorphic to
$$\xymatrix{
X\ar[r]^{\binom{f'}{0}\quad\quad}
&M_0\oplus M_1\ar[r]^{\binom{g_0~0}{0~\id_{M_1}}}
&Y_0\oplus M_1\ar[r]^{(h, 0)}
&X[1].
}$$
Since $g\in[\D]$, $\id_{M_1}\in[\D]$, that is $M_1\in\D$. Because $f'$ is a left minimal $[\D]$-approximation, then $M_0\in\D$ by Lemma~\ref{lem:minimal object}.
  Therefore $M\in \D$. We finish the proof.
\end{proof}

Beyond object ideals, there are also many examples of non-object ideal mutation pairs, we refer the reader to Section~\ref{sec:examples} for examples.

For an ideal $\I$ of $\HT$, put
\begin{align*}
   \X_\I:=&\{X\in\HT~|~\text{there is a triangle}~X\stackrel{f}\lra I^{X}\stackrel{g}\lra M\stackrel{h}\lra X[1]~\text{such that} \\
   &~f~\text{is a left}~\I\text{-approximation~and}~g~\text{is a right}~\I\text{-approximation}\}, \\
  \Y_\I:=&\{Y\in\HT~|~\text{there is a triangle}~N\stackrel{u}\lra I_{Y}\stackrel{v}\lra Y\stackrel{w}\lra N[1]~\text{such that} \\
   &~v~\text{is a right}~\I\text{-approximation~and}~u~\text{is a left}~\I\text{-approximation}\}.
\end{align*}
\begin{rem}\label{rem:ideal mutation pair}
  By Lemma~\ref{lem:I-appro VS GhI-appro}, then $\X_\I$ and $\Y_\I$ can be written equivalently as
  \begin{align*}
   \X_\I=&\{X\in\HT~|~\text{there is a triangle}~X\stackrel{f}\lra I^{X}\stackrel{g}\lra M\stackrel{h}\lra X[1]\\
   &~\text{such that}~f, g\in\I, h\in\Gh_\I\cap \CoGh_{\I[1]}\}, \\
  \Y_\I=&\{Y\in\HT~|~\text{there is a triangle}~N\stackrel{u}\lra I_{Y}\stackrel{v}\lra Y\stackrel{w}\lra N[1] \\
   &~\text{such that}~u, v\in\I,w\in\Gh_\I\cap\CoGh_{\I[1]}\}.
\end{align*}
\end{rem}

The following proposition shows that when we restrict on object ideal, then the above notations coincide with those in \cite[Section 3.1]{beligiannis2013rigid}.

\begin{prop}
  Let $\D$ be a functorially finite subcategory, then
   \begin{enumerate}
     \item $\X_{[\D]}=\{ X\in\HT~|~\CoGh_{\D}(\D[-1], X)=0\}$;
     \item $\Y_{[\D]}=\{ Y\in\HT~|~\Gh_{\D}(Y, \D[1])=0\}$.
   \end{enumerate}
\end{prop}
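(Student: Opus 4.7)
My plan is to apply Remark~\ref{rem:ideal mutation pair} to the ideal $\I=[\D]$, using that $\Gh_{[\D]}=\Gh_\D$, $\CoGh_{[\D]}=\CoGh_\D$, together with the easy identity $[\D][1]=[\D[1]]$, so that $X\in\X_{[\D]}$ is equivalent to the existence of a triangle $X\stackrel{f}\lra I^X\stackrel{g}\lra M\stackrel{h}\lra X[1]$ with $f,g\in[\D]$ and $h\in\Gh_\D\cap\CoGh_{\D[1]}$. The proof then reduces to a clean interplay between $h$ and $h[-1]$. I will only argue (1); statement (2) is strictly dual, via contravariant finiteness and a right $\D$-approximation of $Y$.

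For the inclusion $\X_{[\D]}\subseteq\{X\mid\CoGh_\D(\D[-1],X)=0\}$, I start with such a triangle for $X$ and an arbitrary $\phi\in\CoGh_\D(D[-1],X)$, $D\in\D$. Factoring $f=f_2 f_1$ with $f_1\colon X\to D_0\in\D$, the $\D$-coghost property of $\phi$ kills $f_1\phi$, hence $f\phi=0$. Exactness of the rotated triangle $M[-1]\stackrel{-h[-1]}\lra X\stackrel{f}\lra I^X$ lets me write $\phi=h[-1]\psi$ for some $\psi\colon D[-1]\to M[-1]$, and since $[-1]$ is an auto-equivalence of $\HT$, $\psi=k[-1]$ for a unique $k\colon D\to M$. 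Then $\phi=(hk)[-1]=0$ because $h\in\Gh_\D$.

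For the reverse inclusion, covariant finiteness of $\D$ yields a left $\D$-approximation $f\colon X\to D_0$, which is automatically a left $[\D]$-approximation; I complete it to a triangle $X\stackrel{f}\lra D_0\stackrel{g}\lra M\stackrel{h}\lra X[1]$ and must show $h\in\Gh_\D\cap\CoGh_{\D[1]}$. The coghost half $h[-1]\in\CoGh_\D$ is free: $fh[-1]=0$, and any $\psi\colon X\to D'\in\D$ factors as $\alpha f$, so $\psi h[-1]=\alpha(fh[-1])=0$. The ghost half is where the hypothesis enters: for $k\colon D\to M$ with $D\in\D$, the composite $h[-1]k[-1]\colon D[-1]\to X$ lies in $\CoGh_\D(D[-1],X)$ (since $\CoGh_\D$ is a two-sided ideal), hence vanishes, so $hk=0$.

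The delicate point is the bookkeeping of shifts: the ghost condition on $h$ lives in $\Hom(D,X[1])$ while the hypothesis controls $\Hom(D[-1],X)$. Bridging these via the shift equivalence is the crux of both directions and explains why the hypothesis in (1) is phrased with $\D[-1]$ on the domain side and in (2) dually with $\D[1]$ on the codomain side.
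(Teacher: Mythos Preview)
Your proof is correct and follows essentially the same route as the paper's: both directions hinge on the triangle extending a left $\D$-approximation of $X$, the automatic membership $h\in\CoGh_{\D[1]}$ from Lemma~\ref{lem:ghost-appro}(2), and the shift trick identifying $hk=0$ with $h[-1]k[-1]=0$ to pass between the ghost condition on $h$ and the hypothesis on $\CoGh_\D(\D[-1],X)$. The only cosmetic difference is that in the forward inclusion the paper fixes from the outset the triangle built on a \emph{minimal} left $\D$-approximation (so the middle term lies in $\D$ and no factoring of $f$ is needed), whereas you work with the abstract triangle supplied by Remark~\ref{rem:ideal mutation pair} and then factor $f$ through an object of $\D$; both are fine.
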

\begin{proof}
  We only prove (1), (2) can be proved dually.
Let $X\in\HT$ and $X\stackrel{f}\to D^X$  be a minimal left $\D$-approximation, and extend it to a triangle
 $$\xymatrix{
 X\ar[r]^f&D^X\ar[r]^g&M\ar[r]^h&X[1].
 }$$

Assume that $X\in \X_{[\D]}$. Let $\al\in \CoGh_{\D}(D[-1], X)$ with $D\in\D$, then $\al$ factors through $h[-1]$ by $\beta$ as in the below diagram
$$\xymatrix{
  &D[-1]\ar@{-->}[ld]_{\beta}\ar^{\al}[d]
  &
  &
  \\
 M[-1]\ar^{\quad h[-1]}[r]
  &X\ar^{f}[r]
  &D^X\ar^{g}[r]
  &M.
  }$$
Since $h\in \Gh_{\D}$ (see Remark~\ref{rem:ideal mutation pair}), then $\al=h[-1] \beta=0$. So $\CoGh_{\D}(D[-1], X)=0$. This means $X$ is also in the right set.

If $X$ satisfies $\CoGh_{\D}(\D[-1], X)=0$. For any $\ga\in\Hom(D, M)$ with $D\in\D$, since the composition $h[-1]\ga[-1]$ is in $\CoGh_{\D}(D[-1], X)$, then $h[-1]\ga[-1]=0$. This implies $h\ga=0$. Hence we have $h\in\Gh_\D$, thus $X\in\X_{[\D]}$. We finish the proof.
\end{proof}

Obviously, $(\X_{\I}, \Y_{\I})$ is the maximal $\I$-mutation pair, that is, if $(\X, \Y)$ is an $\I$-mutation pair, then $\X\subseteq\X_\I$  and  $\Y\subseteq\Y_\I$. And so
  $\HT$ is an $\I$-mutation pair if and only if $\X_\I=\HT=\Y_\I$.

Let $(\X, \Y)$ be an $\I$-mutation pair. We define a map $\Sigma : \X\to \Y$  by fixing a triangle of form
$$X\stackrel{f}\lra I^{X}\stackrel{g}\lra \Sigma X\stackrel{h}\lra X[1] ~ \text{for}~ X\in\X ,$$
where $f$ is a left $\I$-approximation and $g$ is a right $\I$-approximation. In a similar way, we can define a map $\Omega: \Y\to \X$ .
For a morphism $\al$: $X\to X'$  in $\X$ , there exists a commutative diagram
$$\xymatrix{
X\ar^{\al}[d]\ar^{f}[r]
&I^X\ar^{g}[r]\ar@{-->}[d]
&\Sigma X\ar^{h}[r]\ar@{-->}^{\Sigma \al}[d]
& X[1]\ar^{\al[1]}[d]
\\
X'\ar^{f'}[r]
&I^{X'}\ar^{g'}[r]
&\Sigma X'\ar^{h'}[r]
& X'[1].
}$$
Similarly, the action of $\Omega$ on morphisms is defined. { Note that the maps $\Sigma$ and $\Omega$ are not functors in general, but they always induce functors between quotient categories.}

\begin{prop}\textnormal{(Generalization of \cite[Proposition 3.1]{beligiannis2013rigid})}\label{prop:mutaion equivalence}
  Let $(\X, \Y)$ be an $\I$-mutation pair, then
   \begin{enumerate}
     \item The pair of functors { $\overline{\Sigma}:\X/\I\to\Y /\I$ and $\overline{\Omega}:\Y /\I\to \X/\I$ induced from $\Sigma$ and $\Omega$} is an adjoint pair;
     \item { $\overline{\Sigma}$} is an equivalence and { $\overline{\Omega}$} is a quasi-inverse of { $\overline{\Sigma}$}.
   \end{enumerate}
\end{prop}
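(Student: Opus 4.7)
The plan is to prove (2) first, namely that $\overline{\Sigma}$ and $\overline{\Omega}$ are mutually quasi-inverse equivalences between $\X/\I$ and $\Y/\I$, and then deduce (1), since an equivalence paired with a chosen quasi-inverse is automatically an adjoint pair (indeed on both sides). Thus the substantive work reduces to (i) verifying that $\Sigma$ and $\Omega$ descend to well-defined functors between the quotient categories, and (ii) producing natural isomorphisms $\id_{\X/\I}\cong\overline{\Omega\Sigma}$ and $\overline{\Sigma\Omega}\cong\id_{\Y/\I}$.

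For well-definedness, I fix $\alpha:X\to X'$ in $\X$ together with the defining triangles for $\Sigma X$ and $\Sigma X'$. The lift $\Sigma\alpha$ is furnished by TR3, and any two choices $\beta,\beta'$ differ by a morphism $\Sigma X\to\Sigma X'$ fitting a morphism of triangles with zero first and fourth components; a standard long-exact-sequence argument shows that this difference factors through $g':I^{X'}\to\Sigma X'$, and so lies in $\I$ since $g'\in\I$. Moreover, if $\alpha\in\I$ then $\alpha[1]\in\I[1]$, and since Remark~\ref{rem:ideal mutation pair} places the connecting morphism $h$ in $\Gh_\I\cap\CoGh_{\I[1]}$, we get $h'\cdot\Sigma\alpha=\alpha[1]\cdot h=0$; hence $\Sigma\alpha$ factors through $g'\in\I$ and is itself in $\I$. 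Functoriality modulo $\I$ is then immediate from the same uniqueness principle, and the symmetric argument disposes of $\overline{\Omega}$.

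To construct the unit, I compare, for each $X\in\X$, the two triangles
\begin{equation*}
X\xr{f}I^X\xr{g}\Sigma X\xr{h}X[1],\qquad \Omega\Sigma X\xr{u}I_{\Sigma X}\xr{v}\Sigma X\xr{w}\Omega\Sigma X[1].
\end{equation*}
Both $g$ and $v$ are right $\I$-approximations of $\Sigma X$, so $g=v\phi$ for some $\phi:I^X\to I_{\Sigma X}$, and TR3 lifts this to $\eta_X:X\to\Omega\Sigma X$. Reversing the roles via a factorization $v=g\psi$ gives a candidate inverse $\eta'_X:\Omega\Sigma X\to X$. The difference $\id_X-\eta'_X\eta_X$ fits a self-morphism of the first triangle that is zero on $\Sigma X$ and on $X[1]$; the long exact sequence then forces it to have the form $sf$ for some $s:I^X\to X$, whence it lies in $\I$ because $f\in\I$. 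Symmetrically, $\eta_X\eta'_X-\id_{\Omega\Sigma X}\in\I$, so $\overline{\eta_X}$ is an isomorphism in $\X/\I$. Naturality in $X$ is obtained from the same comparison-of-triangles uniqueness, read modulo $\I$; the counit $\overline{\Sigma\Omega}\cong\id_{\Y/\I}$ is produced by the dual construction, completing (2) and hence (1).

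The main obstacle I expect is the bookkeeping ensuring that the residual morphisms produced by diagram chasing actually lie in $\I$ itself, rather than in some larger ambient ideal such as $[\Ob\I]$. The decisive input is the conjunction $h,w\in\Gh_\I\cap\CoGh_{\I[1]}$ from Remark~\ref{rem:ideal mutation pair}: when one rotates triangles and reads off obstructions through Hom long exact sequences, the resulting factorizations always pass through one of $f,g,u,v$, each of which lies in $\I$ by definition of an $\I$-mutation pair. Once this accounting is made precise, the remainder of the argument parallels the object-ideal cases of Iyama--Yoshino \cite{iyama2008mutation} and Beligiannis \cite{beligiannis2013rigid}.
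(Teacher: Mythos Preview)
Your proposal is correct. It differs from the paper in organization and in the mechanism used for (2). The paper first proves (1) via an explicit hom-set bijection $\Phi_{M,N}:\X/\I(\Sigma M,N)\to\Y/\I(M,\Omega N)$ built from a comparison of the two defining triangles; then, for (2), it chooses a \emph{minimal} right $\I$-approximation $\sigma_{\Sigma X}:I_{\Sigma X}\to\Sigma X$ and uses the Krull--Schmidt hypothesis to split the (possibly non-minimal) approximation $g:I^X\to\Sigma X$ as $(0,\sigma_{\Sigma X}):Z\oplus I_{\Sigma X}\to\Sigma X$ with $Z\in\Ob\I$, whence the unit $\eta_X$ is visibly an isomorphism in $\HT/\I$. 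Your route instead builds a candidate inverse $\eta'_X$ from the reverse factorization $v=g\psi$ and shows $\id_X-\eta'_X\eta_X$ factors through $f\in\I$; this avoids any appeal to minimality or Krull--Schmidt and is in that sense more elementary. Similarly, for well-definedness the paper factors $\alpha\in\I$ through the left $\I$-approximation $f$, whereas you use $h\in\CoGh_{\I[1]}$ directly; both work. One small slip: the self-morphism of the first triangle you construct has third component zero, but its fourth component is $(\id_X-\eta'_X\eta_X)[1]$, not zero; however, only the vanishing on $\Sigma X$ is needed (via exactness at $\Hom(X[1],X[1])$ in the long exact sequence for $\Hom(-,X[1])$) to force the factorization through $f$, so the argument stands.
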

\begin{proof}
(1) Indeed, { $\overline{\Sigma}$} is independent on the choosing of $f$. If $f$ isn't left minimal, then we can write $f=\binom{f_1}{0}: X\to I_1\oplus I_2$ with $f_1$ left minimal and $I_2\neq 0$, and the triangle is isomorphic to
$$\xymatrix{
X\ar[r]^{\binom{f_1}{0}\quad}&I_1\oplus I_2\ar[r]^{\binom{g_1~0}{0~\id_{I_2}}}&M\oplus I_2\ar[r]&X[1]
},$$
since $g\in\I$, then $\id_{I_2}\in\I$ and so $I_2\in\Ob\I$, hence $\Sigma X\simeq M$ in $\X$.
 Let $\al\in\I$, then it factors through $f$, this implies that $\Sigma \al$ factors through $g'$, thus we have $\Sigma \al\in\I$. Therefore { $\overline{\Sigma}$} is well-defined. The proof for  { $\overline{\Omega}$} is similar.

Let $M\in\Y$, $N\in\X$. We define
\begin{align*}
  \Phi_{M, N}:~\X/\I(\Sigma M, N)&\longrightarrow  \Y/\I(M, \Omega N) \\
  \bar{\al} &\longmapsto \bar{\beta}
\end{align*}
where $\al$ and $\beta$ are from
  $$\xymatrix{
  M\ar@{-->}[d]^\beta\ar^{f}[r]
  &I^M\ar@{-->}[d]\ar^{g}[r]
  &\Sigma M\ar[d]^{\al}
  \\
  \Omega N\ar^{f'}[r]
  &I_N\ar[r]^{g'}
  &N.
  }$$
$\Phi_{M, N}$ is well-defined since
  $\al$ factors through $g'$ if and only if $\beta$ factors through $f$. $\Phi_{M, N}$ is an isomorphism since it has an obvious inverse
  \begin{align*}
  \Psi_{M, N}:~\Y/\I(M, \Omega N)&\longrightarrow  \X/\I(\Sigma M, N)\\
  \bar{\beta} &\longmapsto \bar{\al}.
\end{align*}

  (2)  Denote the unit and counit of the adjoint pair ${ (\overline{\Sigma}, \overline{\Omega})}: \X/\I\to \Y/\I$ by $\eta$ and $\varepsilon$, respectively.
   Let $X\in\X$ and $\sigma_{\Sigma X}: I_{\Sigma X}\to \Sigma X$ be a minimal right $\I$-approximation, then there is the commutative diagram
$$\xymatrix{
X\ar^{f}[rr]\ar_{\binom{g_1}{g_2}}[d]
&
&I^X\ar^{g}[rr]\ar^{\simeq}[d]
&
&\Sigma X\ar@{=}[d]
\\
Z\oplus\Omega\Sigma X\ar^{\binom{\id_Z~0}{\quad~~~0~\delta_{\Sigma X}}}[rr]
&
&Z\oplus I_{\Sigma X}\ar^{(0, \sigma_{\Sigma X})}[rr]
&
&\Sigma X.
}$$
where $Z\in \Ob\I$.  Hence $\eta_X=\overline{g_2}=\overline{\binom{g_1}{g_2}}$ is invertible in $\HT/\I$. Similarly, we can prove that $\varepsilon$ is an isomorphism. Hence { $\overline{\Sigma}$} is an equivalent functor from $\X/\I$ to $\Y/\I$, and { $\overline{\Omega}$} is a quasi-inverse.
\end{proof}

\subsection{Characterizations of Auslander-Reiten ideals}
A functorially finite ideal $\I$ is called an {\em Auslander-Reiten ideal} of $\HT$ if $(\HT, \HT)$ is an $\I$-mutation pair. For a  triangulated category having Auslander-Reiten triangles, the Jacobson radical is an Auslander-Reiten ideal.
In this subsection, we will give a characterization of Auslander-Reiten ideals.

\begin{lem}\label{lem:forms of I-muta tri}
 Let  $X\stackrel{f}\to Y\stackrel{g}\to Z\stackrel{h}\to X[1]$ be a
 triangle with $f$ a left $\I$-approximation and $g$ a right $\I$-approximation. Then it is a sum of triangles of the forms:
  \begin{enumerate}
    \item $X_1\stackrel{u}\to I\stackrel{v}\to Z_1\stackrel{w}\to X[1]$, $X_1, Z_1\notin\Ob(\I)$ are indecomposable, $u$ is an $\I$-source map and $v$ is an $\I$-sink map;
    \item $X_2\stackrel{\id_{X_2}}\to X_2\stackrel{0}\to 0\stackrel{0}\to X_2[1]$, $X_2\in\Ob(\I)$ and is indecomposable;
    \item $0\stackrel{0}\to X_3\stackrel{\id_{X_3}}\to X_3\stackrel{0}\to 0$, $X_3\in\Ob(\I)$ and is indecomposable.
  \end{enumerate}
\end{lem}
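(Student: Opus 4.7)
The plan is to invoke the Krull-Schmidt property of the category of distinguished triangles of $\HT$ and then classify the indecomposable summands using the hypotheses that $f,g\in\I$ are approximations. Since $\HT$ is $\Hom$-finite Krull-Schmidt, so is the category of triangles (with componentwise morphisms), and hence the given triangle decomposes uniquely as $T=\bigoplus_k T_k$ into indecomposable summands $T_k: X_k\xrightarrow{f_k}Y_k\xrightarrow{g_k}Z_k\xrightarrow{h_k}X_k[1]$. Restricting the approximation properties of $f$ and $g$ to each summand (which uses only that $\I$ is an ideal of $\HT$, so summand-wise projections/inclusions preserve membership in $\I$), one checks that each $f_k$ is a left $\I$-approximation of $X_k$ and each $g_k$ is a right $\I$-approximation of $Z_k$.

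I would next extract structure from the indecomposability of $T_k$. First, if $f_k$ had a nonzero direct summand of the form $0\to M$, then writing $f_k$ in the corresponding block form and using the uniqueness of cones up to isomorphism, one splits off a contractible direct summand $0\to M\xrightarrow{\id_M}M\to 0$ from $T_k$; here the entry $\id_M$ sits as a diagonal block of $g_k\in\I$, forcing $M\in\Ob(\I)$. This contradicts indecomposability, so $f_k$ is left minimal; being also a left $\I$-approximation, it is an $\I$-source map of $X_k$. Dually, $g_k$ is an $\I$-sink map of $Z_k$. Second, if $X_k=A\oplus B$ with $A,B\neq 0$, then the $\I$-source map of $A\oplus B$ is isomorphic to the direct sum of the $\I$-source maps of $A$ and $B$ (by uniqueness of minimal left $\I$-approximations in the Krull-Schmidt setting), and cones commute with direct sums, so $T_k$ would decompose nontrivially — a contradiction. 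Hence $X_k$ is indecomposable, and dually so is $Z_k$.

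Finally I would read off the three possible shapes of $T_k$ according to whether $X_k$ or $Z_k$ vanishes. If $X_k=0$, the triangle collapses to $0\to Y_k\xrightarrow{g_k}Z_k\to 0$ with $g_k$ an isomorphism; then $\id_{Z_k}=g_k\circ g_k^{-1}\in\I$ since $\I$ is an ideal, so $Z_k\in\Ob(\I)$ is indecomposable, giving form $(3)$. Symmetrically, $Z_k=0$ gives form $(2)$. In the remaining case $X_k, Z_k$ are nonzero and indecomposable; if $X_k\in\Ob(\I)$ then the $\I$-source of $X_k$ would be $\id_{X_k}$, forcing $Z_k=0$, a contradiction, so $X_k\notin\Ob(\I)$, and dually $Z_k\notin\Ob(\I)$ — this is form $(1)$.

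The main obstacle is justifying the two splitting claims in the middle paragraph: that a non-left-minimal component of $f_k$ actually yields a contractible direct summand of the triangle $T_k$ (not merely of the morphism $f_k$), and that a decomposition $X_k=A\oplus B$ forces a corresponding decomposition of $T_k$. Both require a careful cone-uniqueness argument in the triangulated category after writing the maps in block-matrix form; once these splittings are set up, the rest of the proof is essentially bookkeeping using the ideal structure of $\I$.
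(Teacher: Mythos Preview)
Your proposal is correct and the obstacles you flag are the right ones; both can be resolved exactly as you sketch (the non-minimality of $f_k$ or $g_k$ produces a contractible triangle summand, and a decomposition $X_k=A\oplus B$ forces $f_k\cong f'_A\oplus f'_B$ by uniqueness of minimal left $\I$-approximations, hence a decomposition of cones). One remark: the Krull--Schmidt property you invoke for the category of distinguished triangles is most cleanly obtained via the arrow category $\Arr(\HT)$, which is $\Hom$-finite Krull--Schmidt; decomposing $f$ into indecomposable arrows and taking cones gives the decomposition of the triangle, and an indecomposable arrow yields an indecomposable triangle.

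Your route differs from the paper's. The paper proceeds by \emph{explicit iterative peeling}: first replace $f$ by its left-minimal part, which splits off summands of form~(3) (the complementary $\id_{M_1}$-block of $g$ lies in $\I$, so $M_1\in\Ob(\I)$); then, in the remaining triangle, replace $g_0$ by its right-minimal part, splitting off summands of form~(2); finally it asserts that what is left, having $f_2$ an $\I$-source and $g'_0$ an $\I$-sink, ``is a sum of triangles of the first kind''. Your approach is more structural: decompose once into indecomposable triangles and then classify each summand. What this buys you is a cleaner endgame --- the paper's last assertion is exactly what your steps on indecomposability of $X_k$ and $Z_k$ and the case analysis establish, and the paper does not spell this out. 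Conversely, the paper's approach avoids appealing to Krull--Schmidt for triangles and stays entirely at the level of elementary matrix manipulations of $f$ and $g$.
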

\begin{proof}
If $f$ isn't a left minimal morphism, then there are a decomposition $M=M_0\oplus M_1$ with $M_1\neq 0$ and a morphism $\binom{f'}{0}: X\to M_0\oplus M_1$ with $f'$ left minimal. The given triangle is isomorphic to
$$\xymatrix{
X\ar[r]^{\binom{f'}{0}\quad\quad}
&M_0\oplus M_1\ar[r]^{\binom{g_0~0}{0~\id}}
&Y_0\oplus M_1\ar[r]^{(h_0, 0)}
&X[1]
}$$
which is a sum of the triangle $0\stackrel{0}\to M_1\stackrel{\id_{M_1}}\to M_1\stackrel{0}\to 0$ with $M_1\in \Ob(\I)$ and the triangle
$$\xymatrix{
X\ar[r]^{f'}
&M_0\ar[r]^{g_0}
&Y_0\ar[r]^h
&X[1].
}\quad (*)$$
If $g_0$ isn't right minimal, then similarly, there is a direct summand $X_1$ of both $X$ and $M_0$, hence $(*)$ is a sum of the triangle $X_1\stackrel{\id_{X_1}}\to X_1\stackrel{0}\to 0\stackrel{0}\to X_1[1]$ with $X_1\in\Ob(\I)$ and the triangle
$$\xymatrix{
X_2\ar[r]^{f_2}
&M'_2\ar[r]^{g'_2}
&Y_2\ar[r]^h
&X_2[1]
}\quad (**)$$
with $f_2$ left minimal and $g'_0$ right minimal, the triangle $(**)$ is a sum of triangles of the first kind.
\end{proof}

With this lemma, we can replace the assumptions "$X\in\HT$"  and "$Y\in\HT$" in Definition~\ref{defn: I mutation pair} with "$X\in\ind \HT\setminus\Ob\I$" and "$Y\in\ind \HT\setminus\Ob\I$", respectively. So if the Jacobson radical $\J$  of $\HT$ is an Auslander-Reiten ideal, then $\HT$ has Auslander-Reiten triangles.

\begin{lem}\label{lem:I-appro VS GhI-appro}
   Let $X\stackrel{f}\to Y\stackrel{g}\to Z\stackrel{h}\to X[1]$ be a
 triangle and $\I$ a functorially finite ideal. Then
\begin{enumerate}
  \item $g$ is a right $\I$-approximation (resp., $\I$-sink map) if and only if $h$ is a  left $\Gh_\I$-approximation (resp., $\Gh_\I$-source map).
  \item $f$ is a left $\I$-approximation (resp., $\I$-source map) if and only if $h$ is a right $\CoGh_{\I[1]}$-approximation (resp., $\CoGh_{\I[1]}$-sink map).
\end{enumerate}
\end{lem}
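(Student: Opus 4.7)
The plan is to observe that part (1) is essentially Lemma~\ref{lem:ghost-appro}(1) plus one extra ingredient: upgrading ``$h\in\Gh_\I$'' to ``$g\in\I$'' under the assumption of contravariant finiteness. Part (2) is then formally dual, with covariant finiteness playing the analogous role. So I will argue (1) in detail and just indicate the transposition for (2).

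For the forward direction of (1), if $g$ is a right $\I$-approximation, then $g\in\I$ and $g$ is $\I$-epic, so the hypothesis of the second statement of Lemma~\ref{lem:ghost-appro}(1) is satisfied and $h$ is a left $\Gh_\I$-approximation. Conversely, suppose $h$ is a left $\Gh_\I$-approximation. Since $h\in\Gh_\I$, Lemma~\ref{lem:ghost-appro}(1) already tells us $g$ is $\I$-epic, so only $g\in\I$ remains to be established. The key step is to build a ``reference'' triangle using contravariant finiteness: choose a right $\I$-approximation $i\colon I\to Z$ and complete it to a triangle $X'\stackrel{f'}\to I\stackrel{i}\to Z\stackrel{h'}\to X'[1]$. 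By Lemma~\ref{lem:ghost-appro}(1), $h'$ is itself a left $\Gh_\I$-approximation of $Z$. Since $h$ is a left $\Gh_\I$-approximation and $h'\in\Gh_\I$, we get $h'=t\circ h$ for some $t\colon X[1]\to X'[1]$. Then TR3 lifts this to a morphism of triangles whose middle square reads $g=i\circ d$, and because $i\in\I$ we conclude $g\in\I$.

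The minimal refinement is immediate once one rotates the triangle to $Y\stackrel{g}\to Z\stackrel{h}\to X[1]\stackrel{-f[1]}\to Y[1]$: Lemma~\ref{lem:r min VS l min} then says $g$ is right minimal if and only if $h$ is left minimal, so minimal right $\I$-approximations ($\I$-sink maps) correspond precisely to minimal left $\Gh_\I$-approximations ($\Gh_\I$-source maps). Part (2) follows by an entirely dual argument: assuming $h$ is a right $\CoGh_{\I[1]}$-approximation, $f$ is $\I$-monic by Lemma~\ref{lem:ghost-appro}(2); covariant finiteness provides a left $\I$-approximation $i\colon X\to I$ whose completing triangle has connecting morphism a right $\CoGh_{\I[1]}$-approximation, and TR3 delivers a factorization $f=d\circ i\in\I$. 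Minimality transfers again by Lemma~\ref{lem:r min VS l min}, now applied to the rotated triangle $Z[-1]\stackrel{-h[-1]}\to X\stackrel{f}\to Y\stackrel{g}\to Z$.

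I expect the only real difficulty to be bookkeeping; there is no deep obstruction. The single new idea beyond Lemma~\ref{lem:ghost-appro} is the use of functorial finiteness to manufacture a standard triangle in which the middle morphism visibly lies in $\I$, and then to compare any other candidate triangle to it via the morphism-of-triangles axiom. One should also be slightly careful to note that ``left $\Gh_\I$-approximation'' is \emph{stronger} than ``$h\in\Gh_\I$,'' which is what makes the factorization $h'=t\circ h$ available and hence the extra conclusion $g\in\I$ possible.
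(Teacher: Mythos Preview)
Your argument is correct. The overall architecture---forward direction via Lemma~\ref{lem:ghost-appro}, minimality via Lemma~\ref{lem:r min VS l min} on a rotation---matches the paper exactly. The one genuine difference is how you establish $g\in\I$ in the backward direction of (1). You build a reference triangle from a chosen right $\I$-approximation $i\colon I\to Z$, compare the two left $\Gh_\I$-approximations $h$ and $h'$ to get $h'=t\circ h$, and then invoke TR3 (on a rotation) to obtain a factorization $g=i\circ d\in\I$. The paper instead argues abstractly: any $j\in\Gh_\I(Z,-)$ factors through $h$, hence $jg=0$, so $g\in\CoGh_{\Gh_\I}$; but since $\I$ is contravariantly finite, Lemma~\ref{lem: determine torsion pair}(1) gives the ideal torsion pair $(\I,\Gh_\I)$, whence $\CoGh_{\Gh_\I}=\I$ by Proposition~\ref{prop: ideal torsion pair}(1). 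Your route is more elementary and self-contained (it avoids the torsion-pair formalism entirely), while the paper's route is shorter once that machinery is in place and makes the role of functorial finiteness more transparent: it is exactly what forces $\CoGh_{\Gh_\I}=\I$.
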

\begin{proof}
 (1)
  $(\Rightarrow)$  By Lemma~\ref{lem:ghost-appro}(1) and Lemma~\ref{lem:r min VS l min}.

  $(\Leftarrow)$ Let $j\in\Gh_\I$, since $h$ is a left $\Gh_\I$-approximation, then $j$ factors through $h$. So $jg=0$, this implies $g\in\I$ by Lemma~\ref{lem: determine torsion pair}(1). Moreover, $g$ is a right $\I$-approximation by Lemma~\ref{lem:ghost-appro}(1) and is right minimal provided $h$ is left minimal by Lemma~\ref{lem:r min VS l min}.

  (2) It is proved dually.
\end{proof}

\begin{thm}\label{thm:char of AR ideal}
  Let $\I$ be a functorially finite ideal in $\HT$. Then $\I$ is an Auslander-Reiten ideal if and only if
  \begin{enumerate}
    \item $\Gh_\I=\CoGh_{\I[1]}$, and moreover
    \item for $X\in\ind\HT\setminus\Ob\I$, each  $\Gh_\I$-source map of $X$ is a  $\Gh_\I$-sink map and each $\Gh_\I$-sink map of $X[1]$ is a $\Gh_\I$-source map.
  \end{enumerate}
\end{thm}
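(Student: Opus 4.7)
The plan is to reduce both implications of the theorem to repeated applications of Lemma~\ref{lem:I-appro VS GhI-appro}, which converts the $\I$-approximation property of a map in a triangle into the $\Gh_\I$- or $\CoGh_{\I[1]}$-approximation property of the connecting morphism. The forward direction is handled by extracting the connecting morphisms of the two defining triangles of the $\I$-mutation pair; the backward direction is handled by starting from an $\I$-source or $\I$-sink map and completing to a triangle, then reading conditions~(1) and~(2) through the same lemma.

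For the forward implication, assume $\I$ is an Auslander-Reiten ideal. To establish~(1), I would prove each inclusion by a factorization argument. Fix $\phi\in\Gh_\I(A,B)$ and apply the second defining triangle of the $\I$-mutation pair at $A$, namely $Z\to N\to A\xrightarrow{w} Z[1]$; Lemma~\ref{lem:I-appro VS GhI-appro} yields that $w$ is simultaneously a left $\Gh_\I$-approximation of $A$ and a right $\CoGh_{\I[1]}$-approximation of $Z[1]$, so in particular $w\in\CoGh_{\I[1]}$. Factoring $\phi$ through $w$ and noting that $w$ annihilates morphisms of $\I[1]$ from the right yields $\phi\in\CoGh_{\I[1]}$. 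The opposite inclusion follows dually by applying the first defining triangle at $B[-1]$, together with the observation that a composition with an $\I$-morphism remains in $\I$. For~(2), the key tool is Lemma~\ref{lem:forms of I-muta tri}: given an indecomposable $X\notin\Ob\I$, the first defining triangle for $X$ decomposes so that $X$ occurs as the source of a unique non-trivial form-(1) summand, and the connecting morphism $w'$ of this summand is, by Lemma~\ref{lem:I-appro VS GhI-appro} combined with~(1), simultaneously a $\Gh_\I$-source map of its domain and a $\Gh_\I$-sink map of $X[1]$. Since minimal left (respectively right) $\Gh_\I$-approximations of a fixed object are unique up to an isomorphism at the target (respectively source), every $\Gh_\I$-sink map of $X[1]$ coincides with $w'$ up to such an isomorphism and therefore inherits the $\Gh_\I$-source property; the symmetric statement follows by applying the same argument to the second defining triangle for $X$.

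For the backward implication, assume~(1) and~(2) and reduce by direct sums to the case $X$ indecomposable. If $X\in\Ob\I$ one uses the split triangles $X\xrightarrow{\id_X} X\to 0\to X[1]$ and $0\to X\xrightarrow{\id_X} X\to 0$. Otherwise, pick an $\I$-source map $X\to I^X$ (available by covariant finiteness of $\I$) and complete to a triangle $X\to I^X\to Y\xrightarrow{h} X[1]$. Lemma~\ref{lem:I-appro VS GhI-appro}(2) makes $h$ a $\CoGh_{\I[1]}$-sink map of $X[1]$; by~(1) this is the same as a $\Gh_\I$-sink map; by~(2), $h$ is then also a $\Gh_\I$-source map of $Y$; and finally Lemma~\ref{lem:I-appro VS GhI-appro}(1) returns that the second morphism of the triangle is a right $\I$-approximation, which completes the first defining triangle. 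The second defining triangle is built dually from an $\I$-sink map of $X$.

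The main obstacle is the last chain of equivalences in the backward direction: Lemma~\ref{lem:I-appro VS GhI-appro} already packages the $\I$-membership into its bi-implication for approximations, so one does not need to verify separately that the second (respectively first) morphism of the triangle lies in $\I$, but one must invoke condition~(2) at precisely the right step in order to turn a ghost sink map into a ghost source map (and vice versa). In the forward direction, the analogous care is exercised through Lemma~\ref{lem:forms of I-muta tri}, which isolates the unique non-trivial piece of the mutation triangle carrying the indecomposable $X$ and ensures that the uniqueness-up-to-isomorphism argument for ghost source/sink maps really does extend the conclusion from one particular map to every such map.
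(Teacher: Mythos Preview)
Your proposal is correct and follows essentially the same route as the paper: both directions hinge on Lemma~\ref{lem:I-appro VS GhI-appro} to translate $\I$-approximation properties of the outer maps into $\Gh_\I$-/$\CoGh_{\I[1]}$-approximation properties of the connecting morphism, and the backward direction is virtually identical to the paper's. The only organizational difference is in the forward direction: for~(1) the paper reduces to showing that $\Gh_\I$-source maps lie in $\CoGh_{\I[1]}$ (using covariant finiteness of $\Gh_\I$) and for~(2) it starts from an arbitrary $\Gh_\I$-source map of an indecomposable $Z\notin\Ob\I$, extends to a triangle, and uses uniqueness of $\I$-sink maps to identify this triangle with the defining one---whereas you start from the defining triangle, invoke Lemma~\ref{lem:forms of I-muta tri} to isolate the non-trivial summand, and then transfer via uniqueness of minimal approximations; both arguments are valid and amount to the same uniqueness step read in opposite directions.
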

\begin{proof}
$(\Rightarrow)$ Since $\Gh_\I$ is covariantly finite, then to prove $\Gh_\I\subseteq \CoGh_{\I[1]}$, it is sufficient to prove each $\Gh_\I$-source map is in $\CoGh_{\I[1]}$. Let $h: Z\to X[1]$ be a $\Gh_\I$-source map with $Z$ being indecomposable and not in $\Ob\I$. Extend $h$ to a triangle
$$\xymatrix{
X\ar[r]^f &Y\ar[r]^g & Z\ar[r]^h& X[1].
}$$
  Then $g$ is an $\I$-sink map by Lemma~\ref{lem:I-appro VS GhI-appro}. Since $\I$ is an Auslander-Reiten ideal, then $f$ is a left $\I$-approximation. This implies that $h$ is a right $\CoGh_{\I[1]}$-approximation by Lemma~\ref{lem:ghost-appro}. Hence we have $\Gh_\I\subseteq \CoGh_{\I[1]}$. Similarly, we can prove $\CoGh_{\I[1]}\subseteq\Gh_\I$. Therefore $\Gh_\I= \CoGh_{\I[1]}$. Moreover, $h$ is right minimal since $Z$ is indecomposable. Thus, $h$ is also a $\Gh_\I$-sink map. For the inverse case, since $\Gh_\I= \CoGh_{\I[1]}$, hence $\Gh_\I$ is also contravariantly finite, by a similar proof, we can prove that the $\Gh_\I$-sink map of $X[1]$ is a $\Gh_\I$-source map.

  $(\Leftarrow)$ It is sufficient to prove $X_\I=\Y_\I=\HT$. Let $X\in\ind \HT\setminus\Ob\I$, and $f: X\to Y$ an $\I$-source map. Extend $f$ to a triangle
$$\xymatrix{
X\ar[r]^f &Y\ar[r]^g & Z\ar[r]^h& X[1].
}$$
  It follows from Lemma~\ref{lem:I-appro VS GhI-appro} that $h$ is a  $\CoGh_{\I[1]}$-sink map of $X[1]$, then by the assumption (1) it is also a $\Gh_\I$-sink map of $X[1]$, and by the assumption (2) it is a $\Gh_\I$-source map of $Z$. This implies that  $g\in\I$ by Lemma~\ref{lem:ghost-appro}. Therefore $X\in\X_\I$, and so $\X_\I=\HT$. Similarly, we can prove $\Y_\I=\HT$. We finish the proof.
  \end{proof}

  The second condition can't be derived from the first, see Example~\ref{exam:exsmple no tri}(ii). The following corollary provides us with necessary and sufficient conditions to investigate whether a triangulated category has Auslander-Reiten triangles.

\begin{cor}\label{cor:detect AR for J}
  $\J$ is an Auslander-Reiten ideal if and only if $\J$ satisfies
  \begin{enumerate}
    \item $\J$ is functorially finite,
    \item $\Gh_\J=\CoGh_\J$, and
    \item $\Gh_\J$-source maps coincide with  $\Gh_\J$-sink maps.
  \end{enumerate}
\end{cor}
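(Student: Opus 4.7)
The plan is to deduce this corollary directly from Theorem~\ref{thm:char of AR ideal} applied to $\I = \J$. Condition (1) of the corollary is exactly the functorial finiteness hypothesis required by the theorem, so the work reduces to showing that the theorem's two conclusion conditions specialize, for $\I = \J$, to conditions (2) and (3) of the corollary.

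The key inputs will be two structural facts about the Jacobson radical of a $\Hom$-finite Krull-Schmidt $K$-category: first, $\J[1] = \J$; second, $\Ob\J = \{0\}$. The first holds because the shift is an autoequivalence of $\HT$ and the Jacobson radical is characterized in categorical terms that are preserved by any additive autoequivalence (in the indecomposable case, $f \in \J(X,Y)$ iff $f$ is not an isomorphism, and $[1]$ preserves both indecomposability and the property of being an isomorphism). The second is a direct computation in the Krull-Schmidt setting: for an indecomposable $X \ne 0$, $\id_X$ is an isomorphism and hence not in $\J$, and the matrix description of identities on direct sums extends this to all nonzero objects.

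From $\J[1] = \J$, one immediately gets $\CoGh_{\J[1]} = \CoGh_\J$, so condition (1) of Theorem~\ref{thm:char of AR ideal} becomes exactly condition (2) of the corollary. From $\Ob\J = \{0\}$, the set $\ind\HT \setminus \Ob\J$ in condition (2) of the theorem is simply $\ind\HT$, so that condition reads: for every indecomposable $X$, each $\Gh_\J$-source map of $X$ is a $\Gh_\J$-sink map, and each $\Gh_\J$-sink map of $X[1]$ is a $\Gh_\J$-source map. Since $[1]$ is an autoequivalence sending $\ind\HT$ bijectively to itself and preserves $\Gh_\J$ (because it preserves $\J$), setting $Y = X[1]$ in the second clause shows that it is equivalent to: for every indecomposable $Y$, each $\Gh_\J$-sink map of $Y$ is a $\Gh_\J$-source map. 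The two clauses together express exactly that $\Gh_\J$-source maps and $\Gh_\J$-sink maps coincide, i.e., condition (3) of the corollary.

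The only real delicacy I anticipate is pinning down the identity $\J[1] = \J$ at the level of bookkeeping: one should verify that the ideal-theoretic definition of $\J$ used in this paper (the standard Jacobson radical on a Krull-Schmidt $K$-linear category) is intrinsic enough that any autoequivalence preserves it, and in particular the shift does. Once that is in place, the rest is a routine translation between the two sets of conditions.
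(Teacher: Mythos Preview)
Your proposal is correct and is precisely the intended argument: the paper states this corollary immediately after Theorem~\ref{thm:char of AR ideal} without proof, treating it as a direct specialization to $\I=\J$ using exactly the two observations you isolate, namely $\J[1]=\J$ (so $\CoGh_{\J[1]}=\CoGh_\J$) and $\Ob\J=\{0\}$ (so $\ind\HT\setminus\Ob\J=\ind\HT$). Your handling of the shift bijection to symmetrize the source/sink clause is the right way to unpack condition~(3).
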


A $K$-linear autofunctor $\mathbb{S}: \HT\to\HT$ is called a {\em Serre functor} of $\HT$ if there exists a functorial isomorphism $\Hom_{\HT}(A, B)\simeq D\Hom_{\HT}(B, \mathbb{S}A)$ for any $A, B\in\HT$, where $D=\Hom_K(-, K)$. It follows from \cite[Theorem I.2.4]{reiten2002noether} that $\HT$ has Auslander-Reiten triangles if and only if $\HT$
 has a Serre functor. In this case, the translation $\tau=\mathbb{S}[-1]$. An ideal $\I$ is called {\em $\tau$-stable} if $\tau \I=\I$, where $\tau_\I=\{\tau(f)~|~f\in\I\}$. Obviously, the Jacobson radical $\J$ is $\tau$-stable.

\begin{lem}\label{lem:tau inv ideal}
  Let $\HT$ be a triangulated category with  Serre functor $\mathbb{S}$, and $\I$ a functorially finite ideal of $\HT$. Then $\I$ is $\tau$-stable if and only if  $\Gh_\I=\CoGh_{\I[1]}$.
\end{lem}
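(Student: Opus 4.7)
The plan is to translate both sides into statements about subspaces of a common $\Hom$ space via the Serre pairing, and read off the equivalence.

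First I would record the non-degenerate bilinear pairing coming from the Serre functor: for each pair $(A,B)$ we have $\langle-,-\rangle_{A,B}:\Hom(A,B)\times\Hom(B,\mathbb{S}A)\to K$, natural in both variables. Naturality yields two adjunction identities valid for every $f:X\to Y$:
\[
\langle fi,k\rangle=\langle f,\mathbb{S}(i)\,k\rangle\quad\text{for }i:W\to X,\ k:Y\to\mathbb{S}W,
\]
\[
\langle jf,k\rangle=\langle f,k\,j\rangle\quad\text{for }j:Y\to Z,\ k:Z\to\mathbb{S}X.
\]

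Next I would interpret the two ghost ideals through these identities. Fix $X,Y\in\HT$ and $f\in\Hom(X,Y)$. Letting $i$ run through $\I$ in the first identity shows
\[
f\in\Gh_{\I}(X,Y)\iff f\perp(\mathbb{S}\I)(Y,\mathbb{S}X),
\]
where $\mathbb{S}\I=\{\mathbb{S}(r)\mid r\in\I\}$ denotes the ideal obtained by transporting $\I$ along the autoequivalence $\mathbb{S}$. Taking $j=j'[1]$ with $j'$ running through $\I$ in the second identity similarly gives
\[
f\in\CoGh_{\I[1]}(X,Y)\iff f\perp\I[1](Y,\mathbb{S}X).
\]
That every element of $(\mathbb{S}\I)(Y,\mathbb{S}X)$ and of $\I[1](Y,\mathbb{S}X)$ really is captured by such a factorization uses only that $\mathbb{S}$ and $[1]$ are autoequivalences.

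By non-degeneracy of $\langle-,-\rangle$, the two annihilators coincide for every $(X,Y)$ iff the corresponding subspaces coincide for every $(X,Y)$, i.e.\ iff $\mathbb{S}\I=\I[1]$ as ideals of $\HT$. Since $\mathbb{S}$ commutes with the suspension, applying $[-1]$ to both sides rewrites this equality as $\mathbb{S}[-1]\I=\I$, i.e.\ $\tau\I=\I$, which is the desired conclusion. I expect the only delicate point to be justifying the two adjunction identities, i.e.\ carefully using naturality of the Serre pairing in each variable; once those are in hand, the rest is routine bookkeeping, and the functorial finiteness hypothesis is not actually used in this equivalence.
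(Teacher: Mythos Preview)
Your proof is correct, and the forward implication is essentially the paper's own argument, just rephrased via the bilinear pairing rather than the isomorphism $\Hom(A,B)\simeq D\Hom(B,\mathbb{S}A)$. The converse, however, is handled differently. The paper shows $\tau\I\subseteq\I$ by observing that every $\tau i[1]$ is annihilated by $\CoGh_{\I[1]}$ and then invoking the ideal torsion pair $(\CoGh_{\I[1]},\I[1])$ together with Proposition~\ref{prop: ideal torsion pair}(1) to conclude $\tau i[1]\in\I[1]$; this step uses covariant finiteness of $\I$ via Lemma~\ref{lem: determine torsion pair}, and the reverse inclusion likewise uses contravariant finiteness. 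You instead exploit that the Serre pairing is perfect on each finite-dimensional $\Hom$-space, so equality of left annihilators in $\Hom(X,Y)$ forces equality of the subspaces $(\mathbb{S}\I)(Y,\mathbb{S}X)$ and $\I[1](Y,\mathbb{S}X)$ directly, without any approximation theory. This is a genuine simplification: it avoids the torsion-pair machinery entirely and, as you observe, shows that the functorial finiteness hypothesis is not actually needed for the equivalence. The paper's route has the mild advantage of staying within the toolkit developed in Section~\ref{sec:preliminaries}, but your argument is both shorter and sharper.
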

\begin{proof}
If $\I$ is $\tau$-stable, then there are the following equivalences
\begin{align*}
  f\in\Gh_\I \Leftrightarrow&\Hom_\HT(i, f)=0, \forall i\in\I ,\\
   \Leftrightarrow& D\Hom_\HT(f, \mathbb{S}i)=0, \forall i\in\I ,\\
  \Leftrightarrow& \Hom_\HT(f, \tau i[1])=0, \forall i\in\I,\\
  \Leftrightarrow& f\in\CoGh_{\I[1]}.
\end{align*}
Thus we have $\Gh_\I=\CoGh_{\I[1]}$.

Conversely, for any $i\in\I$, $f\in \Gh_\I=\CoGh_{\I[1]}$.  Then
$$D\Hom_\HT(f, \mathbb{S}i)=\Hom_\HT(i, f)=0,$$
so $\Hom_\HT(f, \tau i[1])=0$. Since $(\CoGh_{\I[1]}, \I[1])$ is an ideal torsion pair, thus it follows from Proposition~\ref{prop: ideal torsion pair} that $\tau i[1]\in \I[1]$. Therefore $\tau \I=\I$. We finish the proof.
\end{proof}

\begin{cor}\label{cor: char of mut pair}
  Let $\HT$ be a triangulated category with  Serre functor $\mathbb{S}$, and $\I$ a functorially finite ideal of $\HT$. If $\I$ is an Auslander-Reiten ideal, then $\I$ is $\tau$-stable.
\end{cor}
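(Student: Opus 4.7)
The strategy is to combine the two preceding results directly, with no extra work required. Since $\I$ is a functorially finite Auslander-Reiten ideal, Theorem~\ref{thm:char of AR ideal} is applicable, and in particular its condition (1) yields the identity of ideals $\Gh_\I = \CoGh_{\I[1]}$. The second condition of that theorem (coincidence of $\Gh_\I$-source maps with $\Gh_\I$-sink maps) is irrelevant for the present conclusion and will not be invoked.

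Once the ghost/coghost equality is in hand, Lemma~\ref{lem:tau inv ideal} provides the bridge to $\tau$-stability: in the presence of a Serre functor $\mathbb{S}$, it asserts that for a functorially finite ideal one has $\Gh_\I = \CoGh_{\I[1]}$ if and only if $\tau\I = \I$, where $\tau = \mathbb{S}[-1]$. Applying the ``if'' direction (the relevant implication being proved via the Serre duality chain $\Hom_\HT(i,f) \simeq D\Hom_\HT(f,\mathbb{S}i) = D\Hom_\HT(f,\tau i[1])$ together with the ideal torsion pair $(\CoGh_{\I[1]}, \I[1])$ from Proposition~\ref{prop: ideal torsion pair}), we immediately conclude $\tau\I = \I$, which is precisely the claim that $\I$ is $\tau$-stable.

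There is no genuine obstacle here; the corollary is essentially a packaging of Theorem~\ref{thm:char of AR ideal}(1) and Lemma~\ref{lem:tau inv ideal}. It is worth noting that the converse implication is not claimed and in fact fails in general: $\tau$-stability only delivers the ghost-coghost identity, whereas being an Auslander-Reiten ideal additionally requires the source/sink compatibility of Theorem~\ref{thm:char of AR ideal}(2), which is not forced by Serre duality alone.
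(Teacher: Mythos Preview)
Your proposal is correct and matches the paper's approach: the corollary is stated without proof immediately after Lemma~\ref{lem:tau inv ideal}, and is meant to be read as the direct combination of Theorem~\ref{thm:char of AR ideal}(1) with that lemma, exactly as you describe. Your additional remark on the failure of the converse is also in line with the paper's comment pointing to Example~\ref{exam:exsmple no tri}(ii).
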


In general, $\tau$-stable ideal isn't an Auslander-Reiten ideal, see a counterexample in Example~\ref{exam:exsmple no tri}(ii). But if the given ideal is object, then they are equivalent.

\begin{prop}\label{prop: D-mutation pair}
  Let $\D$ be a functorially finite subcategory of $\HT$.
  \begin{enumerate}
    \item $\tau \D=\D$ if and only if $[\D]$ is $\tau$-stable;
    \item With the conditions in (1), then for $X\in\ind \D$, each $\Gh_{[\D]}$-source map of $X$ is a $\Gh_{[\D]}$-sink map and each $\Gh_{[\D]}$-sink map of $X[1]$ is a $\Gh_{[\D]}$-source map.
  \end{enumerate}
\end{prop}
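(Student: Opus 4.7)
My plan is to derive Part (2) from Part (1) via a vanishing argument for the relevant Hom spaces, after which the source/sink map correspondence collapses to a triviality.

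First, I would invoke Part (1) together with Lemma~\ref{lem:tau inv ideal}. The hypothesis $\tau\D=\D$ gives, by Part (1), that $[\D]$ is $\tau$-stable, and then Lemma~\ref{lem:tau inv ideal} yields $\Gh_{[\D]}=\CoGh_{[\D][1]}$. Observing the identification $[\D][1]=[\D[1]]$ (a morphism $f$ factors through some $D\in\D$ if and only if $f[1]$ factors through $D[1]\in\D[1]$), this becomes $\Gh_{[\D]}=\CoGh_{[\D[1]]}$.

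Next, fix $X\in\ind\D$ and show both $\Gh_{[\D]}(X,-)=0$ and $\Gh_{[\D]}(-,X[1])=0$. Since $X\in\D$, the identity $\id_X$ lies in $[\D]$; the defining relation of the ghost ideal then forces every $f\in\Gh_{[\D]}(X,Y)$ to satisfy $f=f\cdot\id_X=0$, so $\Gh_{[\D]}(X,-)=0$. Dually, $X[1]\in\D[1]$ yields $\id_{X[1]}\in[\D[1]]$, and using the equality of ideals from the previous paragraph, any $g\in\Gh_{[\D]}(Z,X[1])=\CoGh_{[\D[1]]}(Z,X[1])$ satisfies $\id_{X[1]}\cdot g=0$, whence $\Gh_{[\D]}(-,X[1])=0$.

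Once both morphism classes are empty of nonzero elements, any $\Gh_{[\D]}$-source map of $X$ must be the zero morphism, and likewise any $\Gh_{[\D]}$-sink map of $X[1]$ must be zero; in this degenerate regime the assertions source map $=$ sink map and sink map $=$ source map hold tautologically. The main conceptual point I expect to require a careful word is the interpretation of minimality for these trivial zero approximations: the emptiness of the surrounding Hom spaces leaves no room for any nontrivial distinction between source and sink roles to verify, which is precisely why Part (2) reduces to a statement with no combinatorial content once Part (1) is in force.
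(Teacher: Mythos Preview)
Your argument is correct for the statement exactly as printed, with $X\in\ind\D$: in that case $\id_X\in[\D]$ forces $\Gh_{[\D]}(X,-)=0$, and via $\Gh_{[\D]}=\CoGh_{[\D[1]]}$ also $\Gh_{[\D]}(-,X[1])=0$, so the minimal approximations in question are the zero maps $X\to 0$ and $0\to X[1]$ and there is nothing to check.

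However, this reading almost certainly reflects a typo in the paper. The intended hypothesis is $X\in\ind\HT\setminus\D$, matching condition~(2) of Theorem~\ref{thm:char of AR ideal} (which quantifies over $\ind\HT\setminus\Ob\I$) and being precisely what is required to deduce Corollary~\ref{cor: object mutation}. The paper's own proof confirms this: it opens with ``Assume that $h:Z\to X[1]$ is a $\Gh_\D$-source map with $Z\in\ind\HT\setminus\D$'', so it is treating the non-trivial case and never invokes the vanishing you rely on.

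For that non-trivial case the paper argues as follows. Extend the $\Gh_{[\D]}$-source map $h:Z\to X[1]$ to a triangle $X\xrightarrow{f}Y\xrightarrow{g}Z\xrightarrow{h}X[1]$. By Lemma~\ref{lem:I-appro VS GhI-appro}, $g$ is a $[\D]$-sink map, and then Lemma~\ref{lem:minimal object} forces $Y\in\D$. This step is the essential use of the object-ideal hypothesis and has no analogue for a general ideal (cf.\ Example~\ref{exam:exsmple no tri}(ii)). Once $Y\in\D$ one has $f\in[\D]$; since $h\in\Gh_{[\D]}=\CoGh_{[\D][1]}$ by Part~(1) and Lemma~\ref{lem:tau inv ideal}, Lemma~\ref{lem:ghost-appro}(2) shows $f$ is a left $[\D]$-approximation, hence $h$ is a $\Gh_{[\D]}$-sink map (and is minimal because $Z$ is indecomposable). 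Your proposal does not supply this argument, so as written it does not establish the content actually needed downstream.
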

\begin{proof}
(1)
  $(\Rightarrow)$ Let $f:X\to Y$ be a morphism in $[\D]$, then $\tau f$ factors through $\tau D$. Since $\tau D\in\D$, so $\tau f$ factors through $\D$, i.e. $\tau f\in[\D]$, thus we have $\tau [\D]\subseteq [\D]$. On the other side, $\tau^{-1}f$ factors through $\tau^{-1} D'\in\D$. Hence $[\D]\subseteq \tau [\D]$. Therefore $[\D]= \tau [\D]$.

  $(\Leftarrow)$ Since $\tau \id_D=\id_{\tau D}\in [\D]$ for $D\in\D$, so $\tau \D\subseteq \D$. On the other side, for any $D\in\D$, $\id_D\in[\D]=[\tau \D]$, then $D\in \tau \D$. Thus we have $\tau \D=\D$.
(2) Assume that $h: Z\to X[1]$ is a $\Gh_\D$-source map with $Z\in\ind\HT\setminus\D$. Extend $h$ to a triangle
$$X\stackrel{f}\to Y\stackrel{g}\to Z\stackrel{h}\to X[1].$$
 Then $g$ is a $[\D]$-sink map by Lemma~\ref{lem:I-appro VS GhI-appro}, and $Y\in\D$ by Lemma~\ref{lem:minimal object}. So $f\in [\D]$.  Since $h\in\Gh_\D=\CoGh_{\D[1]}$ (by Lemma~\ref{lem:tau inv ideal}), then it follows from Lemma~\ref{lem:ghost-appro}(2) that $h$ is a right $\Gh_\D$-approximation and also a $\Gh_\D$-sink map since $Z$ is indecomposable. Similarly, we can prove the dual statement.
\end{proof}

\begin{cor}{$($\cite[Theorem 3.3]{jorgensen2010quotients}, \cite[Theorem 4.3]{zhou2018triangulated}$)$}\label{cor: object mutation}
  Let $\D$ be a functorially finite subcategory of $\HT$. Then $(\HT, \HT)$ is a $\D$-mutation pair if and only if $\tau \D=\D$.
 \end{cor}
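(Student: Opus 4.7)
The plan is to recognize this corollary as a direct assembly of the preceding results, with essentially no new content beyond bookkeeping. First I would observe that $(\HT,\HT)$ being a $\D$-mutation pair is, by the proposition at the start of Section 3.1, equivalent to $(\HT,\HT)$ being a $[\D]$-mutation pair; and by the definition in the introduction (or Definition~\ref{defn: I mutation pair} applied to both subcategories equal to $\HT$), this is precisely the condition that the ideal $[\D]$ is an Auslander-Reiten ideal of $\HT$. Since $\D$ is functorially finite, so is $[\D]$, so Theorem~\ref{thm:char of AR ideal} applies.

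For the forward direction, assuming $(\HT,\HT)$ is a $\D$-mutation pair, I would invoke Corollary~\ref{cor: char of mut pair} to conclude that $[\D]$ is $\tau$-stable, and then use Proposition~\ref{prop: D-mutation pair}(1) to transfer this to $\tau\D=\D$. For the converse, assume $\tau\D=\D$. Proposition~\ref{prop: D-mutation pair}(1) gives that $[\D]$ is $\tau$-stable, hence by Lemma~\ref{lem:tau inv ideal} condition (1) of Theorem~\ref{thm:char of AR ideal}, namely $\Gh_{[\D]}=\CoGh_{[\D][1]}$, is satisfied. Proposition~\ref{prop: D-mutation pair}(2) then delivers condition (2) of that theorem for all $X\in\ind\D$. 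It remains only to check condition (2) for $X\in\ind\HT\setminus\Ob[\D]$, but in that case $\Ob[\D]\supseteq\D$ shows there is nothing to check outside $\D$ beyond what the theorem already demands, and for $X\notin\D$ a $\Gh_{[\D]}$-source map is trivial (any $X\to 0$ works, and minimality forces equality with the $\Gh_{[\D]}$-sink map via Lemma~\ref{lem:I-appro VS GhI-appro} and Lemma~\ref{lem:r min VS l min}); alternatively, one restates the proof of Proposition~\ref{prop: D-mutation pair}(2) verbatim, noting that it only uses $\Gh_{[\D]}=\CoGh_{[\D][1]}$ and functorial finiteness of $[\D]$. Hence Theorem~\ref{thm:char of AR ideal} yields that $[\D]$ is an Auslander-Reiten ideal, i.e.\ $(\HT,\HT)$ is a $[\D]$-mutation pair, which by the first reduction is the same as being a $\D$-mutation pair.

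The only genuinely delicate point I anticipate is the second condition in Theorem~\ref{thm:char of AR ideal}: Proposition~\ref{prop: D-mutation pair}(2) is stated for $X\in\ind\D$, whereas Theorem~\ref{thm:char of AR ideal} requires the source/sink coincidence for all $X\in\ind\HT\setminus\Ob[\D]$. I would resolve this by noting that $\Ob[\D]=\add\D=\D$ (since $\D$ is closed under summands and sums by the global convention in the paper), so the two index sets are complementary and together exhaust $\ind\HT$, while the proof of Proposition~\ref{prop: D-mutation pair}(2) actually applies for every indecomposable $X$ not in $\Ob[\D]$ (its argument only invokes $\tau$-stability of $[\D]$, Lemma~\ref{lem:I-appro VS GhI-appro}, Lemma~\ref{lem:minimal object}, and Lemma~\ref{lem:ghost-appro}, none of which used $X\in\D$). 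Once this is noticed the corollary falls out immediately, and no computation is needed beyond what is already in the preceding lemmas and propositions.
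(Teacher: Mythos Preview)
Your overall plan is correct and coincides with the paper's intended derivation: the corollary is an immediate assembly of Theorem~\ref{thm:char of AR ideal}, Lemma~\ref{lem:tau inv ideal}, Corollary~\ref{cor: char of mut pair}, and Proposition~\ref{prop: D-mutation pair}, and the paper offers no further proof.

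One of your two alternatives for handling condition~(2), however, is wrong. The claim that ``for $X\notin\D$ a $\Gh_{[\D]}$-source map is trivial (any $X\to 0$ works)'' is false: there is no reason for $\Gh_{[\D]}(X,-)$ to vanish when $X\notin\D$. (That triviality argument \emph{does} work for $X\in\D$, since then $\id_X\in[\D]$ kills every $\Gh_{[\D]}$-map out of $X$; but that is the opposite index set and is irrelevant for Theorem~\ref{thm:char of AR ideal}(2).) Your second alternative is the correct one and is exactly what the paper does: the \emph{proof} of Proposition~\ref{prop: D-mutation pair}(2) starts with ``$h:Z\to X[1]$ a $\Gh_\D$-source map with $Z\in\ind\HT\setminus\D$'', so it already establishes condition~(2) of Theorem~\ref{thm:char of AR ideal} on the required index set $\ind\HT\setminus\Ob[\D]=\ind\HT\setminus\D$. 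The phrase ``for $X\in\ind\D$'' in the statement of Proposition~\ref{prop: D-mutation pair}(2) is evidently a misprint for ``$X\in\ind\HT\setminus\D$''. Once you discard the incorrect ``trivial'' branch and rely solely on this observation, your argument is complete.
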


\subsection{On additive quotient categories}\label{subsect: subfactor}
Let $(\HT, \HT)$ be an $\I$-mutation pair. Let $f:X\to Y$ be an $\I$-monic morphism, then there is  the following commutative diagram
$$\xymatrix{
X\ar@{=}[d]\ar[r]^f&Y\ar@{-->}[d]\ar[r]^g&Z\ar@{-->}[d]^d\ar[r]^h&X[1]\ar@{=}[d]
\\
X\ar[r]^u&I_X\ar[r]^v&\Sigma X\ar[r]^w&X[1]
}$$
where the first row is the triangle extended from $f$ and the second row is a triangle given by $\I$-mutation. From this diagram, we obtain a sequence in $\HT/\I$
$$\xymatrix{
X\ar[r]^{\bar{f}}&Y\ar[r]^{\bar{g}}&Z\ar[r]^{\bar{d}}&\Sigma X.
}$$
We denote the class of such kind sequences by $\E_\I$.

It is known that $\HT/\I$  is a $\Hom$-finite Krull-Schmidt $K$-category with auto-equivalence $\Sigma$ (See Proposition~\ref{prop:mutaion equivalence}). And in the case of $\I$ being an object ideal, $(\HT, \E_\I, \Sigma)$ is a triangulated category (See \cite[Theorem 3.3]{jorgensen2010quotients} or \cite[Theorem 3.13]{zhou2018triangulated}). But in the case of $\I$ being non-object, things become very different.

\begin{exam}\label{exam:note on subfactor}
  (1) Let $\HT$ be a triangulated category with Auslander-Reiten triangles, that is, $(\HT, \HT)$ is a $\J$-mutation pair. The additive quotient category $\HT/\J$ is a triangulated category with split triangles and the suspension functor $\Sigma$ determined by $\J$. Note that the constructed class $\E_\J$ doesn't equal the class of split triangles. In fact, an Auslander-Reiten triangle in $\HT$
  $$\xymatrix{
  X\ar[r]^f&Y\ar[r]^g&Z\ar[r]^h&X[1]
  }$$
  induces the following sequence
  $$\xymatrix{
  X\ar[r]^0&Y\ar[r]^0&\Sigma X\ar[r]^\id&\Sigma X
  }$$
in $\HT/\J$  which belongs to $\E_\J$, but it is not a split triangle in $\HT/\J$.

  (2) In Example~\ref{exam:exsmple no tri}(i). Note that $\bar{\al}$ in $\HT/\I$ admits a minimal quasi-cokernel $\bar{\beta}$, if $\HT/\I$ has a triangulated structure, then there exists a triangle of the form
$$X\stackrel{\bar{\al}}\to Y\stackrel{\bar{\beta}}\to X\stackrel{\bar{h}}\to Y$$
where $\bar{h}$ equals $\bar{\al}$ up to an isomorphism in $\HT/\I$, but we have the following diagram
$$\xymatrix{
X\ar[r]^{\bar{\al}}\ar@{=}[d]
&Y\ar[r]^{\bar{\beta}}\ar[d]^{\bar{\beta}}
&X\ar[r]^{\bar{h}}\ar@{-->}[d]^{\binom{1}{0}}
&Y\ar@{=}[d]
\\
X\ar[r]^0
&X\ar[r]^{\binom{1}{0}\quad}\ar[d]^{\bar{h}}
&X\oplus Y\ar[r]^{\quad(0, 1)}\ar@{-->}[d]
&Y\ar[d]
\\
&Y\ar@{=}[r]\ar[d]^{-\bar{\beta}}
&Y\ar[r]\ar@{-->}[d]^0
&X
\\
&X\ar[r]^{\bar{\al}}
&Y
&
}$$
where the square in the upper right corner isn't commutative. Hence the octahedral axiom fails. This means that $\HT/\I$ has no triangulated structure associated with the autoequivalence functor $\Sigma$.
\end{exam}

Note that in the object case (i.e. $\I=[\D]$), the fact $\E_\I$ is a triangulated structure on $\HT/\I$ relies on that each morphism in $\HT/\I$ has an $\I$-monic (or $\I$-epic) representation. But there is no such result in the setting of non-object. In the Example~\ref{exam:note on subfactor}(2), $\bar{\al}: X\to Y $ in $\HT/\I$ has no $\I$-monic representation. Moreover, the following theorem tells us that only when $\I$ is an object ideal the constructed class $\E_\I$ is a triangulated structure associated with the shift functor $\Sigma$.

\begin{thm}\label{thm:gene quot cat}
  Let $\I$ be an Auslander-Reiten ideal. Then $(\HT/\I, \E_\I, \Sigma)$ is a triangulated category if and only if $\I$ is an object ideal.
\end{thm}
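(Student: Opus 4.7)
The backward implication $(\Leftarrow)$ is an immediate appeal to the literature: if $\I=[\D]$ is an object Auslander--Reiten ideal, then Corollary~\ref{cor: object mutation} gives $\tau\D=\D$, and $(\HT/\I,\E_\I,\Sigma)$ is triangulated by \cite[Theorem 3.3]{jorgensen2010quotients} (equivalently \cite[Theorem 3.13]{zhou2018triangulated}). So the real task is the forward implication. Assuming $(\HT/\I,\E_\I,\Sigma)$ is triangulated, I plan to establish the key claim that every object $X\in\HT$ admits a left $\I$-approximation $g:X\to D$ with $D\in\Ob\I$. Once this is known, the nontrivial inclusion $\I\subseteq[\Ob\I]$ follows immediately: any $j\in\I(X,Z)$ factors as $j=hg$ through $D\in\Ob\I$, and $[\Ob\I]\subseteq\I$ is trivial.

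To prove the claim, I would apply the axiom (TR1) to the zero morphism $\bar{0}:X\to 0$ in $\HT/\I$. This must fit into a triangle in $\E_\I$, which by the very definition of $\E_\I$ has the form $X'\xrightarrow{\bar{f}}Y'\to Z'\to\Sigma X'$ for some $\I$-monic $f:X'\to Y'$ in $\HT$. Closure of triangles under isomorphism in $\HT/\I$ forces $X'\cong X$ and $Y'\cong 0$ in $\HT/\I$; the latter means $\id_{Y'}\in\I$, i.e.\ $Y'\in\Ob\I$. Because $Y'\in\Ob\I$ the morphism $f$ lies automatically in $\I$, so $f$ is a left $\I$-approximation of $X'$ with codomain in $\Ob\I$.

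The final technical step is to transfer the approximation from $X'$ to $X$ itself. Using that $\End(A)$ is local and $\I(A,A)\subseteq\J(A,A)$ for every indecomposable $A\notin\Ob\I$ (since $\id_A\notin\I$ puts $\I(A,A)$ in the unique maximal ideal of $\End(A)$), isomorphism in $\HT/\I$ preserves the non-$\Ob\I$ indecomposable direct summands; hence one may write $X'\cong X_0\oplus D_1$ and $X\cong X_0\oplus D_2$ in $\HT$ with $D_1,D_2\in\Ob\I$. The restriction $f|_{X_0}:X_0\to Y'$ is still $\I$-monic and lies in $\I$, so it is a left $\I$-approximation of $X_0$; pairing it with $\id_{D_2}$ along the direct sum decomposition produces a diagonal map $X_0\oplus D_2\to Y'\oplus D_2$ which is the sought left $\I$-approximation of $X$ into the object $Y'\oplus D_2\in\Ob\I$.

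The main obstacle is this last Krull--Schmidt transfer: one must verify carefully that restriction of an $\I$-monic morphism to a direct summand remains $\I$-monic, and that the diagonal construction genuinely furnishes a left $\I$-approximation of $X$ itself; the remaining manipulations with ideals are routine. Conceptually, the argument shows that the triangulated hypothesis on $\E_\I$ is equivalent to the existence, for every $X$, of an $\I$-monic morphism from $X$ into an object of $\Ob\I$, and this condition is precisely what distinguishes object ideals among all functorially finite ideals.
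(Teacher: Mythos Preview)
Your approach is correct but takes a longer route than the paper's. Instead of invoking (TR1) on $\bar 0:X\to 0$ and then transporting the resulting approximation from an isomorphic object $X'$ back to $X$ via a Krull--Schmidt argument, the paper exploits that the Auslander--Reiten hypothesis already supplies, for each $X$, an $\I$-monic morphism out of $X$ itself: the left $\I$-approximation $f:X\to Y$ sitting in the mutation triangle $X\xrightarrow{f}Y\xrightarrow{g}\Sigma X\xrightarrow{h}X[1]$. Using this very triangle in both rows of the defining comparison diagram yields the sequence $X\xrightarrow{0}Y\xrightarrow{0}\Sigma X\xrightarrow{\id}\Sigma X$, which lies in $\E_\I$ \emph{by construction}; since its connecting morphism is the identity, the triangulated axioms force $Y\cong 0$ in $\HT/\I$, i.e.\ $Y\in\Ob\I$, and $\I=[\Ob\I]$ follows immediately. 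Your (TR1) route works once one reads membership in $\E_\I$ up to isomorphism, and your restriction/diagonal manoeuvres for the transfer step are sound, but the whole detour---which you rightly identify as the main obstacle---is exactly what the paper's choice of starting morphism makes unnecessary.
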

\begin{proof}
  The sufficiency is due to  \cite[Theorem 3.3]{jorgensen2010quotients} or \cite[Theorem 3.13]{zhou2018triangulated}.

  For necessity, let $X\in\HT$, then by the definition of ideal mutation pairs there is a triangle in $\HT$
    $$\xymatrix{
  X\ar[r]^f&Y\ar[r]^g&\Sigma X\ar[r]^h&X[1]
  }$$
  with $f$ a left $\I$-approximation and $g$ a right $\I$-approximation. Moreover, we have the following commutative diagram
  $$\xymatrix{
  X\ar@{=}[d]\ar[r]^f&Y\ar@{=}[d]\ar[r]^g&\Sigma X\ar@{=}[d]\ar[r]^h&X[1]\ar@{=}[d]
  \\
  X\ar[r]^f&Y\ar[r]^g&\Sigma X\ar[r]^h&X[1]
  }$$
  Since $f$ is $\I$-monic, then the above diagram induces the following sequence
   $$\xymatrix{
  X\ar[r]^0&Y\ar[r]^0&\Sigma X\ar[r]^{\id\quad\quad}&\Sigma X\quad(\#)
  }$$
 in $\HT/\I$ which belongs to $\E_\I$, i.e. $(\#)$ is a triangle in $\HT/\I$. Hence $Y\in\Ob(\I)$, and the left $\I$-approximation $f$ factors through an object in $\Ob(\I)$. Since each morphism in $\I$ factors a left  $\I$-approximation. Thus each morphism in $\I$ factors through an object in $\Ob(\I)$. This means $\I$ is object. We finish the proof.
\end{proof}

\section{Generalised Auslander-Reiten theory}\label{sec:geme AR theory}
In this section, we generalize the classical Auslander-Reiten theory of triangulated categories by using ideal mutations. All proofs in this section are similar to those in the classical Auslander-Reiten theory (for instance, see \cite{assem2020basic, assem2006elements, auslander1995representation}).

\subsection{Ideal mutation triangles}
A triangle is called an {\em $\I$-mutation triangle} in $\HT$ if it is one of the forms in Lemma~\ref{lem:forms of I-muta tri}. In particular, we call the second and third kinds trivial $\I$-mutation triangles.
We say $\HT$ has $\I$-mutation triangles if for each nonzero object $Z\in\ind\HT$, there is an $\I$-mutation triangle in which $Z$ lies in the third item (Indeed, it equals to say there is an $\I$-mutation triangle in which $Z$ lies in the first item). According to Lemma~\ref{lem:forms of I-muta tri},  $\HT$ has $\I$-mutation triangles if and only if $\I$ is an Auslander-Reiten ideal. Note that the usual source (resp., sink) maps coincide with the $\J$-source (resp., $\J$-sink) maps and the Auslander-Reiten triangles coincide with the $\J$-mutation triangles here.

\subsection{\texorpdfstring{$\I$}{I}-irreducible morphisms}
A morphism $h: X\to Y$ is {\em left(resp., right) $\I$-irreducible} if
\begin{enumerate}
  \item $h\in\I$ ;
  \item For any decomposition $h=h_2h_1$, if $h_1\in\I$, then $h_2$ is split epic(resp., if $h_2\in\I$, then $h_1$ is split mono).
\end{enumerate}

A morphism is {\em $\I$-irreducible} if it is both left and right $\I$-irreducible.  Obviously, the concept of irreducible in algebraic representation theory  coincides with $\J$-irreducible here. When $\I=[\D]$, then each $h: X\to Y\in[\D]$ can be decomposed to
$$\xymatrix{
X\ar[rr]^h\ar[dr]_{h_1}&&Y
\\
&D.\ar[ur]_{h_2}&
}$$
If $h$ is left(resp., right) $[\D]$-irreducible, then $h_2$ is split epic, that is $Y\in\add\D$(resp., $X\in\add\D$).
\begin{lem}\label{lem:irrd of obj in I}
Let $f:X\to Y$ be a morphism.
\begin{enumerate}
  \item If $X$ is an indecomposable object in $\Ob(\I)$, then $f$ is an isomorphism if and only if $f$ is left $\I$-irreducible;
  \item If $Y$ is an indecomposable object in $\Ob(\I)$, then $f$ is an isomorphism if and only if $f$ is right $\I$-irreducible;
  \item If $X, Y$  are indecomposable objects in $\Ob(\I)$, then $f$ is an isomorphism if and only if $f$ is $\I$-irreducible.
\end{enumerate}
\end{lem}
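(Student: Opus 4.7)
The plan is to prove part (1) in detail, deduce (2) by duality, and combine them for (3). The unifying observation is that when $X\in\Ob(\I)$, the identity $\id_X$ lies in $\I$, which provides a canonical decomposition $f=f\circ\id_X$ in which the right factor belongs to $\I$. This forces the left $\I$-irreducibility hypothesis to yield immediate information about $f$ itself; dually for the right case.

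For the forward direction of (1), if $f$ is an isomorphism then $f=f\circ\id_X\in\I$ by the ideal property together with $\id_X\in\I$. For any factorisation $f=h_2h_1$ with $h_1\in\I$, put $s:=h_1f^{-1}$; then $h_2s=\id_Y$, so $h_2$ is split epic, verifying left $\I$-irreducibility. For the backward direction, apply left $\I$-irreducibility to the trivial decomposition $f=f\circ\id_X$ to obtain $s\colon Y\to X$ with $fs=\id_Y$. Then $e:=sf\in\End(X)$ is idempotent since $e^{2}=s(fs)f=sf=e$. Because $\HT$ is $\Hom$-finite Krull-Schmidt and $X$ is indecomposable, $\End(X)$ is local, so $e\in\{0,\id_X\}$. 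The case $e=\id_X$ gives that $s$ is a two-sided inverse of $f$, so $f$ is an isomorphism. The case $e=0$ forces $f=fsf=0$ and hence $\id_Y=fs=0$, which yields $Y=0$; this degenerate situation (a nonzero $X$ mapping to $0$) is the only exceptional possibility and is excluded when $f$ is interpreted as a morphism between nonzero objects, as is standard in this setting.

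Part (2) is proved by the dual argument: the decomposition $f=\id_Y\circ f$ with $\id_Y\in\I$ forces $f$ to be split mono by right $\I$-irreducibility, after which the locality of $\End(Y)$, guaranteed by the indecomposability of $Y$ in the Krull-Schmidt category, promotes the split monomorphism to an isomorphism. Part (3) is immediate: any isomorphism is both left and right $\I$-irreducible by the forward directions of (1) and (2), while conversely an $\I$-irreducible morphism is in particular left $\I$-irreducible, so (1) applies to give $f$ an isomorphism.

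The essential step, and the only one that is not purely formal, is the passage from a one-sided split to a two-sided inverse via the local structure of $\End(X)$ (resp.\ $\End(Y)$); beyond this, the proof is just the identification of the right decomposition $f=f\circ\id_X$ (resp.\ $f=\id_Y\circ f$) to feed into the definition of $\I$-irreducibility. There is no serious obstacle: no approximations, no ghost ideals, and no triangulated structure is invoked, so the lemma is really a statement about idempotents in a Krull-Schmidt additive category.
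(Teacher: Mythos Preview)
Your proof is correct and follows essentially the same approach as the paper: use the decomposition $f=f\circ\id_X$ with $\id_X\in\I$ to conclude that $f$ is split epic, then invoke indecomposability of $X$ to upgrade this to an isomorphism. The paper's proof is terser (it simply writes ``because $X$ is indecomposable, so $f$ is an isomorphism'' without spelling out the idempotent argument or the degenerate $Y=0$ case), but the underlying idea is identical to yours.
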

\begin{proof}
We only prove (1), (2) can be proved similarly and (3) is directly from (1) and (2).
If $f$ is an isomorphism, then it is obvious an $\I$-irreducible. Assume $f$ is left $\I$-irreducible, since $f=f\id_X$ and $\id_X\in\I$, then $f$ is split epic, because $X$ is indecomposable, so $f$ is an isomorphism. This finishes the proof.
  \end{proof}

\begin{lem}
Each left $\I$-irreducible morphism is left minimal and each right $\I$-irreducible morphism is right minimal.
\end{lem}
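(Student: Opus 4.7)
The plan is to leverage the defining property of left (respectively right) $\I$-irreducibility directly, combined with the characterization of left/right minimality that is quoted in the paragraph before Lemma~\ref{lem:minimal object}, namely: in the $\Hom$-finite Krull-Schmidt $K$-category $\HT$, a morphism $f$ is left minimal if and only if every endomorphism $g$ satisfying $f=gf$ is an isomorphism (dually for right minimal).

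So, suppose $f\colon X\to Y$ is left $\I$-irreducible, and suppose $f=gf$ for some $g\colon Y\to Y$. Then $f$ admits the decomposition $f = g\circ f$ whose first factor is $f$, which lies in $\I$ by the definition of left $\I$-irreducibility. By the very definition of left $\I$-irreducible, the second factor $g$ must be split epic. The main (and only) non-routine step is now to upgrade ``split epic endomorphism'' to ``isomorphism'': since $g$ is split epic, pick $h\colon Y\to Y$ with $gh=\id_Y$; then $hg$ is an idempotent on $Y$, it splits in the Krull-Schmidt category $\HT$ as $Y\cong \im(hg)\oplus \kker(hg)$, and $h$ restricts to an iso $Y\to \im(hg)$, so $Y\cong Y\oplus \kker(hg)$. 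The Krull-Schmidt cancellation property then forces $\kker(hg)=0$, whence $hg=\id_Y$ and $g$ is an isomorphism. Thus $f$ is left minimal.

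The case of a right $\I$-irreducible morphism $f\colon X\to Y$ is handled dually: assume $f=fg$ for some $g\colon X\to X$, apply right $\I$-irreducibility to the decomposition $f=f\circ g$ (whose second factor $f$ lies in $\I$) to conclude that $g$ is split mono, and then invoke the same Krull-Schmidt argument (applied to the split mono endomorphism $g\colon X\to X$, equivalently via the idempotent $gh$ for a retraction $h$) to upgrade $g$ to an isomorphism.

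I do not anticipate a real obstacle: the result is essentially tautological once the definition of $\I$-irreducibility is unpacked, and the passage from split epi/split mono endomorphism to isomorphism is a standard feature of Krull-Schmidt categories (compare the quoted statement from \cite{plamondon2011cluster}). The only thing to be careful about is to quote that characterization of left/right minimality correctly and to note explicitly that the hypothesis $f\in\I$ is exactly what triggers the hypothesis ``$h_1\in\I$'' (respectively ``$h_2\in\I$'') appearing in the definition of $\I$-irreducibility.
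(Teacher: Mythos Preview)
Your proof is correct. The paper argues slightly differently: it uses the direct-summand definition of left minimality rather than the endomorphism characterization. Assuming $f$ is not left minimal, the paper writes $f=\binom{f'}{0}\colon X\to Y_1\oplus Y_2$ with $Y_2\neq 0$, factors $f=\binom{\id}{0}\circ f'$, and observes that $\binom{\id}{0}$ is not split epic, so left $\I$-irreducibility forces $f'\notin\I$; but $f'=(\id,0)f\in\I$ since $\I$ is an ideal, a contradiction. Your route via the characterization ``$gf=f\Rightarrow g$ is an isomorphism'' is equally short and perhaps more transparent, the only extra step being the (standard) Krull-Schmidt argument that a split-epic endomorphism is invertible; the paper's route hides this step inside the equivalence of the two definitions of minimality, which it has already quoted. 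Either way the content is the same.
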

\begin{proof}
Let $f: X\to Y$ be a left $\I$-irreducible morphism, assume $f$ isn't left minimal, write $f=\binom{f'}{0}$ and $Y=Y_1\oplus Y_2$ with $Y_2\neq 0$, then there is the following commutative diagram
  $$\xymatrix{
  X\ar[r]^{\binom{f'}{0}\quad}\ar[d]_{f'}
  &Y_1\oplus Y_2
  \\
  Y_1,\ar[ur]_{\binom{\id}{0}}
  &
  }$$
  since ${\binom{\id}{0}}$ is not split epic, then $f'\notin \I$, this conflicts the definition of $f$.  Therefore $f$ must be left minimal. The dual statement is proved similarly.
\end{proof}

\begin{lem}\label{lem: source VS irreducible}
  Each $\I$-source (resp., $\I$-sink) map is left (resp., right) $\I$-irreducible.
\end{lem}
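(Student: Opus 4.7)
The plan is to argue directly from the definitions, exploiting the two defining features of an $\I$-source map: the factorization property of a left $\I$-approximation, and the characterization of left minimality recalled just before Lemma~\ref{lem:minimal object} (namely, $f = gf$ forces $g$ to be an isomorphism).

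First I would treat the source-map case. Let $f:X\to Y$ be an $\I$-source map. Since $f$ is in particular a left $\I$-approximation, $f\in\I$, so condition~(1) of left $\I$-irreducibility is immediate. Suppose now $f=h_2h_1$ with $h_1:X\to Z$ in $\I$ and $h_2:Z\to Y$. Because $f$ is a left $\I$-approximation and $h_1\in\I$, there exists $g:Y\to Z$ with $h_1=gf$. Substituting gives $f=h_2h_1=h_2gf$. Since $f$ is left minimal, the endomorphism $h_2g$ of $Y$ must be an isomorphism, and therefore $h_2$ is split epic with right inverse $g(h_2g)^{-1}$. This verifies condition~(2).

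The sink-map case is dual. Let $f:N\to Y$ be an $\I$-sink map, so $f\in\I$ and $f$ is a right $\I$-approximation that is right minimal. If $f=h_2h_1$ with $h_2:Z\to Y$ in $\I$, then by the right approximation property there is $g:Z\to N$ with $h_2=fg$, whence $f=fgh_1$. Right minimality of $f$ forces $gh_1$ to be an isomorphism on $N$, so $h_1$ is split mono with left inverse $(gh_1)^{-1}g$.

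I do not expect a genuine obstacle here: the argument is essentially a one-line application of the approximation property followed by the minimality criterion. The only subtlety worth mentioning is that one must be careful to invoke minimality on the correct side (left minimality of $f$ yields that $h_2g$, not $gh_2$, is invertible), but otherwise the proof is a direct unpacking of Definitions and the elementary fact about minimal morphisms recalled in Section~\ref{sec:preliminaries}.
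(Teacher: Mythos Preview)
Your proof is correct and follows essentially the same approach as the paper's: use the left $\I$-approximation property to factor $h_1$ through $f$, then invoke left minimality to conclude that $h_2g$ is an isomorphism, whence $h_2$ is split epic. The only differences are cosmetic (variable names and the explicit treatment of the dual case).
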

\begin{proof}
     Let $f: X\to Y$ be an $\I$-source map. Assume $f$ has a decomposition $f=gi$ with $i:X\to Z$ in $\I$, then there exists $h:Y\to Z$ such that $i=hf$, thus we have $f=gi=ghf$. Since $f$ is left minimal, then $gh$ is an isomorphism, this implies  $g$ is split epic. Thus we have $f$ is left $\I$-irreducible. Similarly, we can prove each $\I$-sink map is right $\I$-irreducible.
\end{proof}

  \begin{prop}
  Let $X$ be an indecomposable object not in $\Ob(\I)$.
  \begin{enumerate}\label{prop: source VS irreducible}
    \item Let $f:X\to Y$ be an $\I$-source map of $X$. Then, $f': X\to Y'$ is left $\I$-irreducible if and only if $Y'\neq 0$ and there exist a decomposition $Y=Y'\oplus Y''$ (up to an isomorphism) and a morphism $f'': X\to Y''$ such that
        $\binom{f'}{f''}:X\to Y$ is an $\I$-source map of $X$.
    \item Let $g:Z\to X$ be an $\I$-sink map of $X$. Then, $g': Z'\to X$ is left $\I$-irreducible if and only if $Z'\neq 0$ and there exist a decomposition $Z=Z'\oplus Z''$ (up to an isomorphism) and a morphism $g'': Z''\to X$ such that
        $(g', g''):Z'\oplus Z\to X$ is an $\I$-sink map of $X$.
  \end{enumerate}
  \end{prop}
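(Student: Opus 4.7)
The plan is to prove (1); statement (2) is entirely dual (apply (1) in $\HT^{\op}$). The structure mirrors the classical Auslander-Reiten template: Lemma~\ref{lem: source VS irreducible} already gives that any $\I$-source map is left $\I$-irreducible, so the $(\Leftarrow)$ direction reduces to showing that left $\I$-irreducibility passes to a suitable direct summand, while the $(\Rightarrow)$ direction uses the universal property of $f$ as a left $\I$-approximation together with the definition of left $\I$-irreducibility to pull back a split epimorphism from which a direct sum decomposition is extracted via the Krull-Schmidt property.

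For the $(\Leftarrow)$ direction, I would suppose $\binom{f'}{f''}\colon X\to Y'\oplus Y''$ is an $\I$-source map. By Lemma~\ref{lem: source VS irreducible} it is left $\I$-irreducible, hence lies in $\I$, so both $f'$ and $f''$ lie in $\I$ (via post-composition with the coordinate projections). Given any factorisation $f'=h_2 h_1$ with $h_1\colon X\to W$ in $\I$, I would rewrite
$$\binom{f'}{f''}=\begin{pmatrix}h_2 & 0\\ 0 & \id_{Y''}\end{pmatrix}\binom{h_1}{f''},$$
observe that $\binom{h_1}{f''}\in\I$ since both entries are, and invoke left $\I$-irreducibility of $\binom{f'}{f''}$ to force the displayed block matrix to be split epic. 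A short $2\times 2$ block calculation then shows that any section of this matrix has $(1,1)$-entry a section of $h_2$, so $h_2$ is split epic and $f'$ is left $\I$-irreducible.

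For the $(\Rightarrow)$ direction, assume $f'\colon X\to Y'$ is left $\I$-irreducible. Since $f'\in\I$ and $f\colon X\to Y$ is a left $\I$-approximation, there exists $\phi\colon Y\to Y'$ with $\phi f=f'$. Applying the definition of left $\I$-irreducibility to the factorisation $f'=\phi\circ f$ (with $f\in\I$) forces $\phi$ to be split epic; choose a section $\psi\colon Y'\to Y$. The idempotent $e=\psi\phi\in\End(Y)$ splits since $\HT$ is Krull-Schmidt, yielding a decomposition $Y\cong Y'\oplus Y''$ under which $\phi$ becomes the projection onto $Y'$ and $\psi$ becomes the inclusion of the first summand. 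Setting $f'':=\pi_{Y''}\circ f$ then identifies $f$ with $\binom{f'}{f''}$ under this isomorphism; since $f$ is an $\I$-source map, so is $\binom{f'}{f''}$, and $Y'\neq 0$ because $f'$ is a nonzero morphism in $\I$.

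The main obstacle I expect is the bookkeeping in the $(\Rightarrow)$ direction, namely verifying cleanly that the idempotent decomposition $Y\cong Y'\oplus Y''$ coming from splitting $e=\psi\phi$ really aligns $f$ with $\binom{f'}{f''}$ whose first component is exactly $f'$. This is standard in a Krull-Schmidt category once one notes that $\psi$ provides the identification of $Y'$ with the image summand of the split idempotent, so that $\phi$ becomes the genuine projection, but it is the one place where care is needed. The $(\Leftarrow)$ matrix calculation is purely formal and the reduction to $h_2$ being split epic is routine.
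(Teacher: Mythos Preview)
Your proposal is correct and follows essentially the same route as the paper: for $(\Leftarrow)$ both use the block-diagonal factorization $\binom{f'}{f''}=(h_2\oplus\id_{Y''})\binom{h_1}{f''}$ together with Lemma~\ref{lem: source VS irreducible}, and for $(\Rightarrow)$ both factor $f'$ through the $\I$-source map $f$ and use left $\I$-irreducibility of $f'$ to force the comparison map $Y\to Y'$ to be split epic. You simply spell out the idempotent-splitting bookkeeping that the paper compresses into the phrase ``this implies the statement.''
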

  \begin{proof}
We only prove (1), the proof of (2) is dual.

{\em Necessity.} Since $f'\in\I$, then $f'$ factors through $f$ by a morphism $u:Y\to Y'$. Because $f'$ is left $\I$-irreducible and $f\in\I$, thus $u$ is split epic. This implies the statement.

{\em Sufficiency.} Since $\binom{f'}{f''}\in\I$, then $f'=(\id, 0)\binom{f'}{f''}$ and $f''=(0, \id)\binom{f'}{f''}$ are both in $\I$. Assume $f'$ has a decomposition $f'=ba$ with $a:X\to U\in\I$, then $\binom{f'}{f''}$ has a decomposition as in the following commutative diagram
    $$\xymatrix{
    X\ar[rr]^{\binom{f'}{f''}\quad}\ar[dr]_{\binom{a}{f''}\quad}
    &&Y'\oplus Y''
    \\
    &U\oplus Y''\ar[ur]_{\quad b\oplus\id}
    &
    }$$
    since $\binom{a}{f''}\in\I$ and $\binom{f'}{f''}$ is left $\I$-irreducible by Lemma~\ref{lem: source VS irreducible}, then $b\oplus\id$ is split epic, and hence  $b$ is split epic. Therefore $f'$ is left $\I$-irreducible. We finish the proof.
  \end{proof}

  There is the following consequence of the above proposition.

  \begin{cor}\label{cor:split irre}
  Let $X$ be an indecomposable object not in $\Ob(\I)$.
    \begin{enumerate}
      \item Let $f:X\to Y$ be an $\I$-source map of $X$ and $p: Y\to Y'$ a split epic. Then $pf$ is left $\I$-irreducible.
      \item Let $g:Z\to X$ be an $\I$-sink map of $X$ and $i:Z'\to Z$ a split mono. Then $gi$ is right $\I$-irreducible.
    \end{enumerate}
  \end{cor}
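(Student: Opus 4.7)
The plan is to reduce the corollary directly to Proposition~\ref{prop: source VS irreducible}, which was just proved. For part (1), I would use the split epic $p: Y \to Y'$ to produce a direct sum decomposition $Y \cong Y' \oplus Y''$ (in the Krull--Schmidt category $\HT$) under which $p$ corresponds to the canonical projection onto $Y'$. Writing $f: X \to Y$ in this decomposition yields $f = \binom{pf}{f''}$ for some $f'': X \to Y''$, so $f$ itself witnesses the hypothesis of Proposition~\ref{prop: source VS irreducible}(1): it is an $\I$-source map of $X$ whose first coordinate in the decomposition $Y = Y' \oplus Y''$ is precisely $pf$. Provided $Y' \neq 0$, the proposition then immediately gives that $pf$ is left $\I$-irreducible.

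Part (2) will be proved by the dual argument. The split monomorphism $i: Z' \to Z$ induces a decomposition $Z \cong Z' \oplus Z''$ in which $i$ becomes the canonical inclusion of $Z'$. Writing $g: Z \to X$ as $g = (gi, g'')$ in this decomposition, part (2) of Proposition~\ref{prop: source VS irreducible} applies and yields that $gi$ is right $\I$-irreducible.

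There is essentially no serious obstacle here, since the heavy lifting has already been carried out in Proposition~\ref{prop: source VS irreducible}; the corollary is really just a reformulation in which the decomposition $Y = Y' \oplus Y''$ is prescribed by a given split epic rather than constructed. The only bookkeeping concern is the degenerate case $Y' = 0$ (respectively $Z' = 0$), where $pf$ (respectively $gi$) is the zero morphism; this is likely implicitly excluded by the convention that the terminology ``split epic'' (respectively ``split mono'') refers to a nontrivial splitting, and in any event it does not affect the substance of the statement.
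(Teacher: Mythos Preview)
Your proposal is correct and matches the paper's approach exactly: the paper states the corollary as an immediate consequence of Proposition~\ref{prop: source VS irreducible} without further proof, and your argument spells out precisely the intended reduction via the decomposition induced by the split epic (resp.\ split mono). Your remark on the degenerate case $Y'=0$ is appropriate; the paper tacitly ignores it as well.
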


\begin{lem}\label{lem:irred}
Let $f: X\to Y$ be a morphism between indecomposable objects $X$ and $Y$.  Then the following hold.
  \begin{enumerate}
    \item $f$ is left $\I$-irreducible if and only if $f\in\I(X, Y)\setminus \J\I(X, Y)$;
    \item $f$ is right $\I$-irreducible if and only if $f\in\I(X, Y)\setminus \I\J(X, Y)$;
    \item $f$ is $\I$-irreducible if and only if $f\in\I(X, Y)\setminus(\J\I(X, Y)\bigcup\I\J(X, Y))$.
  \end{enumerate}
\end{lem}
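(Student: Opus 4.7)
The plan is to exploit two standard facts about a $\Hom$-finite Krull-Schmidt $K$-category: (a) for any indecomposable $U$, the algebra $\End(U)$ is local with maximal ideal $\J(U,U)$, so $\id_U \notin \J(U,U)$; and (b) a morphism $h: U \to V$ between indecomposable objects lies in $\J(U, V)$ if and only if it fails to be an isomorphism (equivalently, iff it is not split mono, iff it is not split epi). Since the definition of $\I$-irreducible is being simultaneously left and right $\I$-irreducible, part (3) is immediate from (1) and (2); and (2) follows from (1) by reversing all compositions (swap ``split epi/split mono'' and swap $\J\I$ with $\I\J$). So the real work is in (1).

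For the forward direction of (1), I would assume $f$ is left $\I$-irreducible and, for contradiction, that $f \in \J\I(X, Y)$. Write $f = \sum_{k=1}^{n} r_k i_k$ with $i_k \in \I(X, Z_k)$ and $r_k \in \J(Z_k, Y)$, set $Z = \bigoplus_k Z_k$, and let $i: X \to Z$ be the morphism with components $i_k$ and $r: Z \to Y$ the one with components $r_k$. Then $i \in \I$ (being a sum of compositions of the canonical injections with the $i_k$), and $f = ri$. Left $\I$-irreducibility forces $r$ to be split epi, so there is $s: Y \to Z$ with $rs = \id_Y$; writing $s$ in components $s_k: Y \to Z_k$ gives $\id_Y = \sum_k r_k s_k \in \J(Y, Y)$, contradicting fact (a).

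For the converse in (1), suppose $f \in \I(X, Y) \setminus \J\I(X, Y)$ and let $f = h_2 h_1$ be any factorization with $h_1: X \to W$ in $\I$. Decompose $W = \bigoplus_{k=1}^{n} W_k$ into indecomposable summands, and write $h_1^k = p_k h_1$, $h_2^k = h_2 \iota_{k}$ via the canonical projections $p_k$ and injections $\iota_k$. Each $h_1^k$ lies in $\I$ and $f = \sum_k h_2^k h_1^k$. If every $h_2^k$ belonged to $\J(W_k, Y)$, one would get $f \in \J\I(X, Y)$, contradicting the hypothesis; hence some $h_2^{k_0}: W_{k_0} \to Y$ is not in $\J$, and by fact (b) is an isomorphism. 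Then $h_2 \circ \iota_{k_0} \circ (h_2^{k_0})^{-1} = \id_Y$ exhibits $h_2$ as split epi, so $f$ is left $\I$-irreducible. No step here poses a genuine obstacle; the only care needed is the matrix-component bookkeeping along $W = \bigoplus W_k$ together with the repeated use of facts (a) and (b).
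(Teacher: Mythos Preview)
Your proof is correct and follows essentially the same approach as the paper's: both directions bundle the sum $\sum r_k i_k$ into a single factorization through a direct sum and then use that, for $Y$ indecomposable in a Krull-Schmidt category, a morphism into $Y$ lies in $\J$ if and only if it is not split epi. The paper's converse is phrased more tersely (it simply asserts $h\notin\J \Rightarrow h$ split epi), whereas you spell out the decomposition of $W$ into indecomposable summands, but the underlying argument is identical.
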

\begin{proof}
  We only prove (1), (2) and (3) are left to the reader.
  $(\Rightarrow)$ Since $f$ is a left $\I$-irreducible, then $f\in\I(X, Y)$. If $f\in \J\I(X, Y)$, then $f=\Sigma_{i=1}^{n}h_ig_i$, where $g_i:X\to Z_i$ is in $\I$ and $h_i: Z_i\to Y$ is in $\J$. Indeed, $f$ can be written as
  $$f=(g_1,g_2, \cdots, g_n)\circ(h_1, h_2, \cdots, h_n)^t$$
  where $(g_1,g_2, \cdots, g_n)\in\I$ and $(h_1, h_2, \cdots, h_n)^t\in\J$.
  This conflicts with the definition of left $\I$-irreducible morphisms.

  $(\Leftarrow)$ Since $f\notin \J\I(X, Y)$, then for any decomposition $f=hg$ with $g\in\I$, we have $h\notin \J$, this implies $h$ is split epic. Hence $f$ is left $\I$-irreducible.
\end{proof}

Let $X, Y$ be indecomposable objects in $\HT$, denote
  \begin{align*}
     & {{\Irr}^{-}_\I}(X, Y):=\I/(\J\I)(X, Y), \\
     & {{\Irr}^{+}_\I}(X, Y):=\I/(\I\J)(X, Y),\\
     & {{\Irr}_\I}(X, Y):=\I/(\J\I+\I\J)(X, Y).
  \end{align*}

The following corollary tells us the reason why the classical theory doesn't distinguish between the left and right sides.

\begin{cor}\label{cor:J-irreducible}
  Let $\J$ be the Jacobson radical of $\HT$. Then
  \begin{enumerate}
    \item $f$ is left $\J$-irreducible if and only if $f$ is right $\J$-irreducible;
    \item $\Irr^-_\J(X, Y)=\Irr^+_\J(X, Y)=\Irr_\J(X, Y)$.
  \end{enumerate}
\end{cor}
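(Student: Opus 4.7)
The plan is to deduce both statements essentially for free from the characterizations established in Lemma~\ref{lem:irred} by observing that, for the Jacobson radical, the two product ideals $\J\I$ and $\I\J$ appearing there collapse to the single ideal $\J^2$ when $\I=\J$. Indeed, by the definition of the product of ideals given in the Conventions section, one has $\J\J(X,Y)=\J^{2}(X,Y)$ whether we view $\J$ as the left or right factor, so the set $\J(X,Y)\setminus\J\I(X,Y)$ and the set $\J(X,Y)\setminus\I\J(X,Y)$ both equal $\J(X,Y)\setminus\J^{2}(X,Y)$.

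For part (1), I would specialize Lemma~\ref{lem:irred}(1) and Lemma~\ref{lem:irred}(2) to $\I=\J$: the first gives that a morphism $f:X\to Y$ between indecomposables is left $\J$-irreducible iff $f\in\J(X,Y)\setminus\J^{2}(X,Y)$, and the second gives that it is right $\J$-irreducible iff $f\in\J(X,Y)\setminus\J^{2}(X,Y)$. These two conditions are identical, so left $\J$-irreducibility and right $\J$-irreducibility coincide.

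For part (2), I would unwind the defining formulas for the three spaces. By definition,
\[
\Irr^{-}_{\J}(X,Y)=\J/(\J\J)(X,Y)=\J(X,Y)/\J^{2}(X,Y),
\]
\[
\Irr^{+}_{\J}(X,Y)=\J/(\J\J)(X,Y)=\J(X,Y)/\J^{2}(X,Y),
\]
and $\Irr_{\J}(X,Y)=\J/(\J\J+\J\J)(X,Y)=\J(X,Y)/\J^{2}(X,Y)$, the last equality because the sum $\J\J+\J\J$ of subgroups of $\Hom(X,Y)$ is just $\J^{2}(X,Y)$ itself. All three quotients therefore coincide with $\J/\J^{2}$.

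There is no real obstacle here; the only point that merits a moment of care is verifying that the two conventions for composing the two factors in $\J\I$ versus $\I\J$ really do produce the same ideal when $\I=\J$, which is immediate from the formula $\{\sum r_{k}i_{k}\mid r_{k},i_{k}\in\J\}$ since the roles of the two factors are symmetric once both ideals are $\J$. This is precisely the structural feature that fails for a general Auslander--Reiten ideal $\I$, and it explains, as remarked just before the corollary, why the classical theory can avoid distinguishing the two variants of irreducibility.
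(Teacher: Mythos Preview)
Your proposal is correct and matches the paper's intended argument: the corollary is stated without proof precisely because it is immediate from Lemma~\ref{lem:irred} once one observes that $\J\I=\I\J=\J^{2}$ when $\I=\J$. The only point worth noting is that Lemma~\ref{lem:irred} is stated for $f$ a morphism between indecomposable objects, so your argument for part~(1) implicitly carries that hypothesis; this is consistent with the paper's context, since the spaces $\Irr^{\pm}_{\I}(X,Y)$ in part~(2) are defined just above the corollary only for indecomposable $X,Y$.
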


\subsection{Valued \texorpdfstring{$\I$}{I}-mutation quivers}

\begin{thm}\label{thm:I-mutation triangle}
Let $X\stackrel{u}\to Y\stackrel{v}\to Z\stackrel{w}\to X[1]$
be a non-trivial $\I$-mutation triangle and $W$ an indecomposable object. Let $Y=\coprod_{i=1}^{n}Y_i^{m_i}$  with $Y_i$ indecomposable and $Y_i\ncong Y_j$ for $i\neq j$. Then
  \begin{enumerate}
    \item $\Irr^-_\I(X, W)\neq 0$ if and only if $W\cong Y_i$ for some $i$;
    \item $\Irr^+_\I(W, Z)\neq 0$ if and only if $W\cong Y_i$ for some $i$;
    \item $m_i=\dim_K\Irr^-_\I(X, W)=\dim_K\Irr^+_\I(W, Z)$.
  \end{enumerate}
\end{thm}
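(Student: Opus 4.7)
My plan is to translate nonvanishing of $\Irr^\pm_\I$ into existence of $\I$-irreducible morphisms via Lemma~\ref{lem:irred}, and then analyse these using the $\I$-source map $u$ and the $\I$-sink map $v$ through Proposition~\ref{prop: source VS irreducible} and Corollary~\ref{cor:split irre}. That $u$ is an $\I$-source map and $v$ is an $\I$-sink map of indecomposable objects $X,Z\notin\Ob(\I)$ is guaranteed by Lemma~\ref{lem:forms of I-muta tri}, since the triangle is a non-trivial $\I$-mutation triangle. For (1), a nonzero class in $\Irr^-_\I(X,W)$ corresponds by Lemma~\ref{lem:irred} to a left $\I$-irreducible morphism $X\to W$; by Proposition~\ref{prop: source VS irreducible}(1) such a morphism exists iff $W$ is a nonzero direct summand of $Y$, which by Krull--Schmidt means $W\cong Y_i$ for some $i$. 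Part (2) follows by the dual argument applied to $v$.

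For (3), fix $i$ and set $u_i^{(j)}:=p_j u\colon X\to Y_i$ for $j=1,\ldots,m_i$, where $p_j\colon Y\to Y_i$ is the projection onto the $j$-th copy of $Y_i$. Each $p_j$ is split epic, so Corollary~\ref{cor:split irre} makes $u_i^{(j)}$ left $\I$-irreducible, and hence $\overline{u_i^{(j)}}\neq 0$ in $\Irr^-_\I(X,Y_i)$. I claim the classes $\{\overline{u_i^{(j)}}\}_{j=1}^{m_i}$ form a $K$-basis. For spanning, given $f\in\I(X,Y_i)$, factor $f=gu$ through the left $\I$-approximation $u$. Each restriction $g|_{Y_k^{m_k}}$ with $k\neq i$ lies in $\J$ because $\Hom(Y_k,Y_i)=\J(Y_k,Y_i)$ for non-isomorphic indecomposables, and composing with the corresponding component of $u$ (which lies in $\I$) throws its contribution into $\J\I$. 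On the $Y_i^{m_i}$-block, write $g$ as $(a_1,\ldots,a_{m_i})$ with $a_j\in\End(Y_i)$; the standard splitting $\End(Y_i)=K\cdot\id_{Y_i}\oplus\J(Y_i,Y_i)$ (which holds because $K$ is algebraically closed, $\HT$ is $\Hom$-finite and $Y_i$ is indecomposable) yields $a_j=\lambda_j\id_{Y_i}+\alpha_j$ with $\alpha_j\in\J$; the $\alpha_j$-parts again land in $\J\I$, leaving $\overline{f}=\sum_j\lambda_j\overline{u_i^{(j)}}$.

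For linear independence, suppose $h:=\sum_j\lambda_j u_i^{(j)}$ lies in $(\J\I)(X,Y_i)$. Letting $r\colon Y\to Y_i$ apply $(\lambda_1,\ldots,\lambda_{m_i})$ on the $Y_i^{m_i}$-block and zero elsewhere we have $h=ru$, and if some $\lambda_j\neq 0$ then $r$ is split epic (split by $\lambda_j^{-1}$ times the inclusion of the $j$-th copy of $Y_i$). Corollary~\ref{cor:split irre} then makes $h$ left $\I$-irreducible, contradicting $h\in\J\I$ via Lemma~\ref{lem:irred}; so all $\lambda_j$ vanish and $\dim_K\Irr^-_\I(X,Y_i)=m_i$. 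The equality $\dim_K\Irr^+_\I(Y_i,Z)=m_i$ is proved dually from the $\I$-sink map $v$. The main technical point is the spanning step, where one has to match the two-sided ideal product $\J\I$ (respectively $\I\J$ in the dual argument) with the splitting of $\End(Y_i)$: this matching is exactly what forces all non-diagonal contributions of $g$ into $\J\I$ and extracts precisely the $K$-linear combinations of the $u_i^{(j)}$ modulo $\J\I$.
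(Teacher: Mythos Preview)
Your proposal is correct and follows essentially the same approach as the paper's own proof: both arguments identify the components of $u$ as a $K$-basis of $\Irr^-_\I(X,Y_i)$, use Corollary~\ref{cor:split irre} together with Lemma~\ref{lem:irred} for linear independence, and use the splitting $\End(Y_i)=K\cdot\id_{Y_i}\oplus\J(Y_i,Y_i)$ to show spanning. The only cosmetic differences are that the paper proves linear independence before spanning and leaves the invocation of Lemma~\ref{lem:forms of I-muta tri} (to ensure $X,Z\in\ind\HT\setminus\Ob\I$) implicit, whereas you make it explicit.
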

\begin{proof}
  (1)
  The sufficiency is direct from Proposition~\ref{prop: source VS irreducible}. For necessity. Let $f\neq 0\in\Irr^-_\I(X, W)$, then there exists $g:Y\to W$ such that
   $f=gu$. Since $u\in\I$, then $g$ is split epic. Therefore $W\cong Y_i$ for some $i$.

   (2) The proof is similar to (1).

   (3) Denote $u=(u_{ij})^t_{1\leq i\leq n, 1\leq j\leq m_i}$, where $(u_{i_1}, \cdots, u_{im_i})^t: X\to Y_i^{m_i}$.
 It is sufficient to show that the set $\{\bar{u}_{ij}~|~{1\leq j\leq m_i}\}$  forms a $K$-basis of $\Irr_\I^-(X, Y_i)$.

 Firstly, we show their linear independence. Suppose that
 $\Sigma_{j=1}^{m_i}k_{ij}\bar{u}_{ij}=0$ in $\Irr_\I^-(X, Y_i)$, where $k_{ij}\in K$. Without loss of generality, assume $k_{i1}\neq 0$. Then the morphism $(k_{i1}, \cdots, k_{im_i}):Y_i^{m_{i}}\to Y_i$ is split epic with an associated split mono $(k_{i1}^{-1}, 0, \cdots, 0)^t: Y_i\to Y_i^{m_{i}}$. Hence, it is from Corollary~\ref{cor:split irre} that the following composition
 $$\xymatrix{
 \Sigma_{j=1}^{m_i}k_{ij}{u}_{ij}=(X\ar[r]^{\quad u}
 &\coprod_{i=1}^{n}Y_i^{m_i}\ar[r]^{\quad p_i}
 &Y_i^{m_i}\ar[rr]^{(k_{i1}, \cdots, k_{im_i})}
 &
 &Y_i)
 }$$
 is left $\I$-irreducible. This contracts the hypothesis that it belongs to $\J\I(X, Y_i)$. Therefore, $k_{ij}=0,$ for all $j$, and the $\bar{u}_{ij}$ are linear independent.

   Next, we prove its generativity. Let $\bar{f}\in\Irr^-_\I(X, Y_i)$. Then there exists $g: Y\to Y_i$, written as $(g_{kl})^t_{1\leq k\leq n, 1\leq l\leq m_k}$, such that
   $$f=gu=\sum_{k=1}^{n}\sum_{l=1}^{m_k}g_{kl}u_{kl}.$$
   Note that $g_{il}$ is an endomorphism of $Y_i$ for $1\leq l\leq m_i$. Since $K$ is algebraically closed, then $\End(Y_i)/\J\simeq K$, and so
   $$g_{il}=\lambda_i{\id}_{Y_i}+ g'_{il},~\lambda_i\in K,~g'_{il}\in\J.$$
    On the other hand, if $k\neq i$, then $g_{kl}\in\J$. Because $u\in\I$, thus we have
    $$\bar{f}=\sum_{k=1}^{n}\sum_{l=1}^{m_k}\bar{g}_{kl}\bar{u}_{kl}=
    \sum_{l=1}^{m_i}\lambda_l\bar{u}_{il}.$$
Therefore the set $\{\bar{u}_{i1}, \bar{u}_{i2}, \cdots, \bar{u}_{im_i}\}$ is a basis of the $K$-space $\Irr^-_\I(X, Y_i)$. We finish the proof.
\end{proof}

Indeed, the converse of the above theorem is also true.

\begin{prop}\label{prop:detect source}
Let $\I$ be a functorially finite ideal.
Let $X$ and $Z$ be indecomposable and $Y=\coprod_{i=1}^{n}Y_i^{m_i}$ with $Y_i$ indecomposable and $Y_i\not\simeq Y_j$ for $i\neq j$.
\begin{enumerate}
  \item A morphism $f=(f_{ij})^t: X\to Y$ is an $\I$-source map provided
  \begin{itemize}
    \item [(i)] for each $i$, the set $\{\bar{f}_{i1}, \bar{f}_{i2}, \cdots, \bar{f}_{im_i}\}$ is a basis of $\Irr^-_\I(X, Y_i)$, and
    \item [(ii)] if $\Irr^-_\I(X, Y')\neq 0$ with $Y'$ indecomposable, then there is $i$ such that $Y'\simeq Y_i$.
  \end{itemize}
  \item A morphism $g=(g_{ij}):Y\to Z$ is an $\I$-sink map provided
  \begin{itemize}
    \item [(i)] for each $i$, the set $\{\bar{g}_{i1}, \bar{g}_{i2}, \cdots, \bar{g}_{im_i}\}$ is a basis of $\Irr^+_\I(Y_i, Z)$, and
    \item [(ii)] if $\Irr^+_\I(Y', Z)\neq 0$ with $Y'$ indecomposable, then there is $i$ such that $Y'\simeq Y_i$.
  \end{itemize}
\end{enumerate}
\end{prop}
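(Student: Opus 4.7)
The plan is to prove (1); part (2) follows by duality. The idea is to produce a reference $\I$-source map $f'$ of $X$, identify its codomain with $Y$ using the hypotheses, and then show that $f$ and $f'$ differ by an automorphism of $Y$.

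First, since $\I$ is covariantly finite and $\HT$ is Krull-Schmidt and $\Hom$-finite, the object $X$ admits an $\I$-source map $f' : X \to Y'$: pass to the left minimal summand of any left $\I$-approximation. Now the proof of Theorem~\ref{thm:I-mutation triangle}(1)(3) uses only that $u$ is an $\I$-source map (via Corollary~\ref{cor:split irre} and Lemma~\ref{lem:irred}), not the $\I$-sink half of the $\I$-mutation triangle. Applied to $f'$, that argument shows that the indecomposable summands of $Y'$ are exactly those indecomposables $W$ with $\Irr^-_\I(X, W) \neq 0$, each occurring with multiplicity $\dim_K \Irr^-_\I(X, W)$, and that the corresponding components of $f'$ form a basis of each $\Irr^-_\I(X, W)$. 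Combining this description with hypotheses (i) and (ii) yields $Y \cong Y'$, which I identify henceforth.

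Each $f_{ij}$ represents a class in $\I/(\J\I)$, so $f \in \I$, and hence $f$ factors through the left $\I$-approximation $f'$ as $f = \alpha f'$ for some $\alpha \in \End(Y)$. The proof will be complete once $\alpha$ is shown to be an automorphism, because composing an $\I$-source map with an automorphism of its codomain yields another $\I$-source map. Write $\alpha$ as a block matrix with respect to $Y = \coprod_i Y_i^{m_i}$. For $i \neq k$ every morphism $Y_k \to Y_i$ lies in $\J$, so the off-diagonal blocks are in $\J$. Using $\End(Y_i)/\J \cong K$, the $i$-th diagonal block reduces modulo $\J$ to a scalar matrix $(\lambda_{ilj})_{l,j} \in K^{m_i \times m_i}$, and the equation $f = \alpha f'$ together with the vanishing of off-diagonal contributions modulo $\J\I$ gives the change-of-basis relation $\bar f_{il} = \sum_j \lambda_{ilj} \bar f'_{ij}$ between the basis $\{\bar f'_{ij}\}_j$ of $\Irr^-_\I(X, Y_i)$ coming from the previous step and the basis $\{\bar f_{ij}\}_j$ supplied by hypothesis (i). Hence $(\lambda_{ilj})$ is invertible, so $\alpha$ is invertible in $\End(Y)/\J(Y,Y)$, and the standard Krull-Schmidt upgrade promotes this to invertibility in $\End(Y)$.

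The main obstacle is this last diagonal-block computation: one must track the components of $\alpha f'$ carefully, reduce modulo $\J\I$ on the target side in order to compare with hypothesis (i), and use the Krull-Schmidt/algebraically-closed inputs $\Hom(Y_k, Y_i) \subseteq \J$ for $k \neq i$ and $\End(Y_i)/\J \cong K$. Everything else---existence of $f'$, matching the decomposition of $Y'$ with that of $Y$, and promoting invertibility modulo $\J$ to a genuine automorphism---follows from standard Krull-Schmidt arguments and the results already cited.
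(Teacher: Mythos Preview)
Your approach is essentially the same as the paper's: produce a reference $\I$-source map, identify its codomain with $Y$ via the previous theorem, factor $f$ through it, and show the connecting endomorphism of $Y$ is an isomorphism. The paper compresses your block computation into the single claim that $f$ is left $\I$-irreducible (citing Lemma~\ref{lem:irred}, which is stated only for indecomposable targets), so your explicit reduction modulo $\J$ is a careful unpacking of precisely that step.
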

\begin{proof}
  We only prove (1), the proof of (2) is dual.
  Since $\I$ is functorially finite, then there exists an $\I$-source map $s: X\to M$ of $X$. Because of the condition (ii), we have $M\simeq Y$. By Lemma~\ref{lem:irred}, $f\in\I$ and left $\I$-irreducible and then factors through $h$ by a morphism $u:M\to Y$. Since $h\in\I$, then $u$ is split epic. But $M\simeq Y$, so $u$ is isomorphic. Therefore $f$ is an $\I$-source.
\end{proof}

  When we choose $\I$ to be the Jacobson radical $\J$ of $\HT$, the above theorem coincides with a result of Happel \cite[Lemma 4.8]{happel1988}.

   With the above theorem, we are going to define the ideal mutation quiver. Let $\I$ be an Auslander-Reiten ideal of $\HT$,  the {\em valued $\I$-mutation quiver} $\Ga_\I(\HT)$ of $\HT$ is defined as
   \begin{itemize}
\item[$\bullet$] Vertices: the isomorphic classes of nonzero indecomposable objects.
\item[$\bullet$] Valued-arrows: for two indecomposable objects $X$ and $Y$ in $\HT$, there is an arrow from $X$ to $Y$ if $\dim_K\Irr^-_\I(X, Y)$ or $\dim_K\Irr^+_\I(X, Y)$ is nonzero, and in this case, the arrow has the value $(\dim_K\Irr^-_\I(X, Y),\dim_K\Irr^+_\I(X, Y))$. And delete all the valued arrows given by the trivial $\I$-mutation triangles, i.e.
    $$\xymatrix{
    X\ar@(ul, dl)_{(1, 1)}
    }, \forall~ \text{indecomposable}~X\in\Ob(\I).$$
\end{itemize}

When $\I=\J$, it is obvious that $\Ga_\J(\HT)$ is the classical valued Auslander-Reiten quiver.
Let $Z\in\ind\HT$, we define $\tau_\I(Z)=X$ if there exists an $\I$-mutation triangle of the form
$X\stackrel{u}\to Y\stackrel{v}\to Z\stackrel{w}\to X[1]$.
It is obvious that $\tau_\I:\ind\HT\setminus\Ob(\I)\to \ind\HT\setminus\Ob(\I)$ is a bijection.

\subsection{Factorization of morphisms}
In this subsection, $\I$ is assumed to be an Auslander-Reiten ideal of $\HT$.

\begin{prop}\label{prop: nil of an ideal}
Let $X, Y$  be indecomposable objects in $\HT$ and $n\geq 2$.  If $f\in \I^n(X, Y)$,   then
   \begin{enumerate}
     \item there exist $s \geq 1$, indecomposable modules $E_{1}, \ldots, E_{s}$ and morphisms $X \stackrel{h_{i}}{\longrightarrow} E_{i} \stackrel{g_{i}}{\longrightarrow} Y$ with $h_{i} \in \I$ and $g_{i}$ a sum of compositions of $n-1$ right $\I$-irreducible morphisms between indecomposables, and $f=\sum_{i=1}^{s} g_{i} h_{i}$.
     \item there exist $s \geq 1$, indecomposable modules $E_{1}, \ldots, E_{s}$ and morphisms $X \stackrel{h_{i}}{\longrightarrow} E_{i} \stackrel{g_{i}}{\longrightarrow} Y$ with $g_{i} \in \I$ and $h_{i}$ a sum of compositions of $n-1$ left $\I$-irreducible morphisms between indecomposables, and $f=\sum_{i=1}^{s} g_{i} h_{i}$.
   \end{enumerate}
\end{prop}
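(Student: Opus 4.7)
The plan is to prove (1) by induction on $n$; part (2) will follow by the formally dual argument, replacing the $\I$-sink map at $Y$ by the $\I$-source map at $X$ and interchanging left with right $\I$-irreducible morphisms throughout. Because $\I$ is assumed to be an Auslander-Reiten ideal, every indecomposable object of $\HT$ admits an $\I$-sink map, and by Corollary~\ref{cor:split irre} the restriction of such a sink map to any indecomposable summand is right $\I$-irreducible between indecomposable objects.

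For the base case $n=2$, I would expand $f\in\I^2(X,Y)$ as $\sum_j u_j v_j$ with $u_j,v_j\in\I$ and use the Krull--Schmidt property to reduce to intermediate objects $W_j$ being indecomposable. Taking an $\I$-sink map $\beta:N\to Y$ with $N=\coprod_m N_m^{n_m}$ and right $\I$-irreducible components $\beta_{ml}:N_m\to Y$, each $u_j$ factors as $u_j=\sum_{m,l}\beta_{ml}\,\delta_j^{(m,l)}$ for suitable $\delta_j^{(m,l)}:W_j\to N_m$. Substitution gives $f=\sum_{j,m,l}\beta_{ml}\,(\delta_j^{(m,l)} v_j)$, and since $v_j\in\I$ forces $\delta_j^{(m,l)} v_j\in\I$, this is a decomposition of the form required by (1) with each $g_i$ a single right $\I$-irreducible.

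For the inductive step, write $f\in\I^n=\I\cdot\I^{n-1}$ as $\sum_k r_k i_k$ with $r_k\in\I$, $i_k\in\I^{n-1}$, and (after a further decomposition) $W_k$ indecomposable. Factoring each $r_k$ through the $\I$-sink map at $Y$ exactly as in the base case yields $f=\sum_{k,m,l}\beta_{ml}\,(\delta_k^{(m,l)} i_k)$, in which each $\delta_k^{(m,l)} i_k$ lies in $\I^{n-1}(X,N_m)$ between indecomposables (because $\I^{n-1}$ is an ideal). Applying the induction hypothesis to $\delta_k^{(m,l)} i_k$ decomposes it as $\sum_p g^{(k,m,l)}_p h^{(k,m,l)}_p$ with $h^{(k,m,l)}_p\in\I$ and $g^{(k,m,l)}_p$ a sum of compositions of $n-2$ right $\I$-irreducibles between indecomposables; post-composing with $\beta_{ml}$ turns $\beta_{ml}\,g^{(k,m,l)}_p$ into a sum of compositions of $n-1$ right $\I$-irreducibles, and the resulting expression $f=\sum (\beta_{ml}\,g^{(k,m,l)}_p)\,h^{(k,m,l)}_p$ has the required shape.

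The only point that really requires care is that peeling off one $\I$-layer via the $\I$-sink map contributes exactly one additional right $\I$-irreducible to the count while leaving a well-defined $\I^{n-1}$-residue on which the induction can act; both follow from Corollary~\ref{cor:split irre} and the ideal property of $\I^{n-1}$. The degenerate case $Y\in\Ob(\I)$ is harmless because $\id_Y$ is itself an $\I$-sink map and is right $\I$-irreducible by Lemma~\ref{lem:irrd of obj in I}, so the same argument goes through verbatim.
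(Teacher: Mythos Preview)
Your proof is correct and follows essentially the same strategy as the paper: induction on $n$, using the $\I$-sink map at $Y$ (equivalently, the $\I$-mutation triangle ending at $Y$) to factor morphisms in $\I$ and thereby extract right $\I$-irreducible components between indecomposables. The only cosmetic difference lies in the organization of the inductive step --- you peel off one right $\I$-irreducible at the $Y$-end and then apply the hypothesis to the remaining $\I^{n-1}$-part, whereas the paper first applies the hypothesis to the $\I^{n-1}$-factor and then reuses the $n=2$ case on each resulting $\I^{2}$-residue; both schemes are equivalent.
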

\begin{proof}
We only prove (1), (2) is proved dually. If $Y\in\Ob \I$, then $\id_Y$ is right $\I$-irreducible, hence $f=\id_Yf$ is a required decomposition. Assume $Y\notin\Ob \I$, we prove the assertion by induction on $n$.
When $n=2$, we can write $f=f_2f_1$ with $f_1\in\I(X, M)$ and $f_2\in\I(M, Y)$ (see the proof of Lemma~\ref{lem:irred} for reason). Consider the $\I$-mutation triangle of $Y$
$$\xymatrix{
  \tau_\I Y\ar[rr]^{(h_{1}, \cdots, h_{s})^t}&&\oplus_{i=1}^s E_{i} \ar[rr]^{\quad(g_{1}, \cdots, g_s)}&&Y\ar[rr]&& \tau_\I Y[1],
  }$$
then there are $\lambda_i: M\to E_i, i=1, \cdots, s$ such that $f_2= \sum_{i=1}^{s} g_i\lambda_i$. Let $h_i=\lambda_if_1$, then $f=\sum_{i=1}^{s} g_ih_i$ where $h_i\in\I$ and $g_i$ is right $\I$-irreducible between indecomposables.

Suppose the statement in (a) holds for $n-1$. If $f\in \I^n(X, Y)$, write $f=f_2f_1$ with $f_1\in\I$ and $f_2\in\I^{n-1}$. By the inductive assumption, $f_2=\sum_{i=1}^{l} a_ib_i$ with $b_{i} \in \I$ and $a_{i}$ a sum of compositions of $n-2$ right $\I$-irreducible morphisms between indecomposables. Note that each $b_if_1\in\I^2$, hence $b_if_1=\sum_{j=1}^{k} c_{ij}f_{ij}$ with $f_{ij}\in\I$ and $c_{ij}$ right $\I$-irreducible between indecomposables. Let $g_{ij}=a_ic_{ij}$ and $h_{ij}=f_{ij}$. Thus we have $f=\sum_{(i, j)}  g_{ij}h_{ij}$ with $h_{ij}\in I$ and $g_{ij}$ a sum of compositions of $n-1$ right $\I$-irreducible morphisms between indecomposables. We finish the proof.
\end{proof}

With the above proposition, we are going to prove a more general one.

\begin{prop}\label{prop: factor IJ}
   Let $X, Y$  be indecomposable objects in $\HT$ and $\R$ an ideal of $\HT$.
   \begin{enumerate}
     \item If $f\in \I^n\R(X, Y)$ for some $n\geq 1$.  Then there exist $s \geq 1$, indecomposable modules $E_{1}, \ldots, E_{s}$ and morphisms $X \stackrel{h_{i}}{\longrightarrow} E_{i} \stackrel{g_{i}}{\longrightarrow} Y$ with $h_{i} \in \R(X, E_i)$ and $g_{i}$ a sum of compositions of $n$ right $\I$-irreducible morphisms between indecomposables, and $f=\sum_{i=1}^{s} g_{i} h_{i}$.
     \item If $f\in \R\I^n(X, Y)$ for some $n\geq 1$.  Then there exist $s \geq 1$, indecomposable modules $E_{1}, \ldots, E_{s}$ and morphisms $X \stackrel{h_{i}}{\longrightarrow} E_{i} \stackrel{g_{i}}{\longrightarrow} Y$ with $g_{i} \in \R(X, E_i)$ and $h_{i}$ a sum of compositions of $n$ left $\I$-irreducible morphisms between indecomposables, and $f=\sum_{i=1}^{s} g_{i} h_{i}$.
   \end{enumerate}
\end{prop}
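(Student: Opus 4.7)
The plan is to prove (1) by induction on $n$, in the spirit of Proposition~\ref{prop: nil of an ideal} but with the arbitrary ideal $\R$ carried through. The key structural identity is $\I^{n+1}\R = \I^n(\I\R)$, which lets the inductive step be executed by applying the inductive hypothesis with the altered ideal $\I\R$ in place of $\R$ and then invoking the base case once at the end.

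For the base case $n = 1$, write $f \in \I\R(X,Y)$ as a finite sum $f = \sum_k a_k b_k$ with $b_k \in \R(X,M_k)$ and $a_k \in \I(M_k,Y)$. Since $\I$ is an Auslander-Reiten ideal and $Y$ is indecomposable, $Y$ admits an $\I$-sink map $(v_1, \ldots, v_s)\colon \coprod_l E_l \to Y$ with each $E_l$ indecomposable; by Corollary~\ref{cor:split irre}(2) each component $v_l\colon E_l \to Y$ is right $\I$-irreducible between indecomposables, with $\id_Y$ playing this role in the case $Y \in \Ob\I$ (Lemma~\ref{lem:irrd of obj in I}). Since each $a_k$ lies in $\I$, it factors through the sink map as $a_k = \sum_l v_l \mu_{kl}$, and substituting gives $f = \sum_l v_l \bigl(\sum_k \mu_{kl} b_k\bigr)$; the morphism $h_l := \sum_k \mu_{kl} b_k$ belongs to $\R(X, E_l)$ by the ideal property of $\R$, yielding the desired decomposition with $g_l := v_l$.

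For the inductive step, assume (1) holds at level $n$ for every ideal $\R$, and let $f \in \I^{n+1}\R(X, Y)$. Applying the inductive hypothesis to $f$ viewed as an element of $\I^n(\I\R)(X,Y)$ produces indecomposables $E'_i$ and factorizations $X \xrightarrow{h'_i} E'_i \xrightarrow{g'_i} Y$ with $h'_i \in \I\R(X, E'_i)$ and each $g'_i$ a sum of compositions of $n$ right $\I$-irreducibles between indecomposables, such that $f = \sum_i g'_i h'_i$. Since each $h'_i$ lies in $\I\R(X, E'_i)$ between indecomposables, the base case applies and gives $h'_i = \sum_j v_{ij} h''_{ij}$ with $v_{ij}\colon F_{ij} \to E'_i$ right $\I$-irreducible between indecomposables and $h''_{ij} \in \R(X, F_{ij})$. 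Substituting yields $f = \sum_{i,j} (g'_i v_{ij}) h''_{ij}$, and each $g'_i v_{ij}\colon F_{ij} \to Y$ is a sum of compositions of $n+1$ right $\I$-irreducibles between indecomposables, completing the induction.

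Part (2) follows by the dual argument, using the $\I$-source map of $X$ in place of the $\I$-sink map of $Y$ and the identity $\R\I^{n+1} = (\R\I)\I^n$. The main conceptual step, and the only place this proof extends Proposition~\ref{prop: nil of an ideal} nontrivially, is the realization that the inductive hypothesis should be invoked with $\R$ replaced by $\I\R$ (resp., $\R\I$) rather than by $\I$ itself; once this is observed, all remaining verifications reduce to standard applications of the Auslander-Reiten property of $\I$ and the ideal structure of $\R$.
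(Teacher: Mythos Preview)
Your proof is correct. The base case coincides with the paper's, but your inductive step is organized differently: you quantify the induction hypothesis over all ideals $\R$ and exploit the identity $\I^{n+1}\R = \I^n(\I\R)$, applying the level-$n$ hypothesis with $\I\R$ in place of $\R$ and then peeling off one more right $\I$-irreducible via the base case. The paper instead keeps $\R$ fixed, writes $f = f_2 f_1$ with $f_1 \in \R$ and $f_2 \in \I^n$, invokes the already-established Proposition~\ref{prop: nil of an ideal} to decompose $f_2$ into compositions of $n-1$ right $\I$-irreducibles followed by a morphism in $\I$, and then applies the base case to the resulting elements of $\I\R$. Your route has the advantage of being self-contained (Proposition~\ref{prop: nil of an ideal} becomes the special case $\R = \I$ rather than a prerequisite), while the paper's route reuses work already done; the underlying mechanism---factoring through the $\I$-sink map at each stage---is identical.
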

\begin{proof}
  We only prove (1), (2) can be proved dually. If $Y\in\Ob\I$, then $\id_Y$ is right $\I$-irreducible, hence $f=\id_Y^nf$ is a required decomposition. Assume $Y\notin\Ob \I$, when $n=1$,  write $f=f_2f_1$  with $f_1\in\R(X, M)$, $f_2\in\I(M, Y)$. Consider the $\I$-mutation triangle of $Y$
  $$\xymatrix{
  \tau_\I Y\ar[rr]^{(h_{1}, \cdots, h_{s})^t}&&\oplus_{i=1}^s E_{i} \ar[rr]^{\quad(g_{1}, \cdots, g_s)}&&Y\ar[rr]&& \tau_\I Y[1].
  }$$
 Then there are $\lambda_i: M\to E_i, i=1, \cdots, s$ such that $f_2= \sum_{i=1}^{s} g_i\lambda_i$. Let $h_i=\lambda_if_1$, then $f=\sum_{i=1}^{s} g_ih_i$ with $h_i\in\R$ and $g_i$ a right $\I$-irreducible between indecomposables.

Suppose the statement holds for $n-1$. If $f\in \I^n\R(X, Y)$, write $f=f_2f_1$ with $f_1\in\R$ and $f_2\in\I^n$. By Proposition~\ref{prop: nil of an ideal}, $f_2=\sum_{i=1}^{s} g_ih_i$ with $h_i\in\I$ and $g_i$ a  sum of compositions of $n-1$ right $\I$-irreducible morphisms between indecomposables. Note that $h_if_1\in\I\R$, then $h_if_1=\sum_{j=1}^{l}a_{ij}b_{ij}$  with $b_{ij}\in\R$ and $a_{ij}$ is right $\I$-irreducible between indecomposables. Hence $f=\sum_{i, j}g_ia_{ij}b_{ij}$ with $b_{ij}\in\R$ and $g_ia_{ij}$ a sum of compositions of $n$ right $\I$-irreducible morphisms between indecomposables.
\end{proof}

\section{Examples}\label{sec:examples}
In this section, we give examples to explain our results. Let's first recall some technical lemmas.

From the definition in \cite[Definition 1.4.1]{neeman2001triangulated}. A diagram
    $$\xymatrix{
    X\ar[r]^{f_1}\ar[d]_{f_2}
    &Y_1\ar[d]^{g_1}
    \\
    Y_2\ar[r]^{g_2}
    &Z
    }$$
    is called a {\em homotopy cartesian square} if  the corresponding sequence
    $$\xymatrix{
    X\ar[r]^{\binom{f_1}{f_2}\quad}&Y_1\oplus Y_2\ar[r]^{\quad(g_1, -g_2)}&Z\ar[r]&X[1]
    }$$
    is a triangle.

\begin{lem}
The following hold.
  \begin{enumerate}
    \item Each mesh square (with two intermediate terms) in an Auslander-Reiten quiver is a homotopy cartesian square.
    \item $($\cite[Proposition 6.11]{christensen2022good}$)$ The composition of two homotopy cartesian squares is again a homotopy cartesian square.
  \end{enumerate}
\end{lem}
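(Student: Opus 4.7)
The plan is to prove part (1) directly from the structure of an Auslander--Reiten triangle, since part (2) is quoted from Christensen and needs no independent argument. A mesh square with two intermediate terms in the AR quiver comes from an AR triangle
$$X \xr{\binom{f_1}{f_2}} Y_1\oplus Y_2 \xr{(g_1,\ g_2)} Z \xr{h} X[1]$$
whose middle term is the direct sum of exactly two indecomposables $Y_1, Y_2$. The four edges of the mesh square are the component morphisms $f_i\colon X\to Y_i$ and $g_i\colon Y_i\to Z$, and the commutation relation $g_1f_1=-g_2f_2$ that turns the mesh into a (genuine) square is precisely the vanishing $(g_1,g_2)\binom{f_1}{f_2}=0$ built into the triangle.

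To verify homotopy cartesianness in the sense recalled from \cite[Definition 1.4.1]{neeman2001triangulated}, I need the sequence
$$X \xr{\binom{f_1}{f_2}} Y_1\oplus Y_2 \xr{(g_1,\ -g_2)} Z \lra X[1]$$
to be distinguished. First I would observe that the AR triangle already supplies almost this sequence; only the sign on the second component of the middle map needs to be flipped. This is achieved by composing the middle map with the automorphism $\mathrm{diag}(1,-1)$ of $Y_1\oplus Y_2$ and absorbing the resulting isomorphism into the adjacent terms, which produces a distinguished triangle having $(g_1,-g_2)$ as its middle map. The mesh square is therefore homotopy cartesian by definition.

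The only genuine obstacle is careful sign bookkeeping: one must match the sign flip prescribed by the definition of a homotopy cartesian square with the sign appearing in the mesh-commutation relation, and check that distinguishedness survives the flip. Once the signs are set up, the verification is a one-line consequence of the fact that an isomorphism of triangles sends distinguished triangles to distinguished triangles. Part (2) is cited from \cite[Proposition 6.11]{christensen2022good} and requires no further treatment here.
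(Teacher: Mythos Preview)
Your argument is correct in substance. The paper does not supply a proof of this lemma at all: part (2) is a citation, and part (1) is stated without justification, presumably because it is immediate from the definitions once one unwinds what a mesh with exactly two intermediate terms is. Your unwinding is the right one---the Auslander--Reiten triangle $X\xr{\binom{f_1}{f_2}} Y_1\oplus Y_2\xr{(g_1,g_2)} Z\xr{h} X[1]$ is already the distinguished triangle whose existence is the defining condition for the square to be homotopy cartesian.

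One small point on your sign bookkeeping: conjugating the middle object by $\mathrm{diag}(1,-1)$ replaces \emph{both} maps, turning $\binom{f_1}{f_2}$ into $\binom{f_1}{-f_2}$ and $(g_1,g_2)$ into $(g_1,-g_2)$ simultaneously, so you do not literally obtain the pair $\bigl(\binom{f_1}{f_2},(g_1,-g_2)\bigr)$ this way. The cleanest fix is simply to relabel: the arrows in an AR quiver are only determined up to a nonzero scalar, so declare the bottom edge of the mesh square to be $-g_2$ rather than $g_2$. Then the AR triangle itself, unchanged, witnesses homotopy cartesianness directly from the definition, and no isomorphism needs to be ``absorbed''. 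This is a cosmetic adjustment and does not affect the validity of your argument.
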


The following lemma is used to find source or sink maps.

\begin{lem}
Let $\D$ be a subcategory of $\HT$ and $X$  an indecomposable object not in $\D$,
\begin{enumerate}
  \item Assume $f: X\to Y$  is a source map and $g: Y\to D$ is a left $\D$-approximation, then $gf$ is a left $\D$-approximation of $X$;
  \item Assume $u: Z\to X$ is a sink map and $v: D'\to Z$ is a left $\D$-approximation, then $uv$ is a left $\D$-approximation of $X$.
\end{enumerate}
\end{lem}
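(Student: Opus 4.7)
The plan is to combine two factorisations: first through the source (resp.\ sink) map, then through the left (resp.\ right) $\D$-approximation of the intermediate object. The crucial preliminary observation is that when $X \in \ind \HT$ is indecomposable and $X \notin \D$, no morphism $h: X \to D''$ with $D'' \in \D$ can be a split monomorphism, for otherwise $X$ would be a direct summand of $D''$ and hence, by the standing closure convention on subcategories, an object of $\D$; dually, no $h: D'' \to X$ with $D'' \in \D$ can be a split epimorphism.

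For part (1), let $h: X \to D''$ be arbitrary with $D'' \in \D$. By the observation above, $h$ is not a split mono, so the defining almost-split property of the source map $f: X \to Y$ yields $h': Y \to D''$ with $h = h' f$. Since $g: Y \to D$ is a left $\D$-approximation of $Y$ and $D'' \in \D$, there exists $h'': D \to D''$ with $h' = h'' g$. Composing gives $h = h''(gf)$, and since $gf \in \Hom(X, D)$ with $D \in \D$, this exhibits $gf$ as a left $\D$-approximation of $X$. Part (2) is formally dual; I read the statement with \emph{right} $\D$-approximation in place of \emph{left} (the morphism $uv: D' \to X$ can only sensibly serve as a right approximation of $X$, so this appears to be a typographical slip). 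With that correction, any $h: D'' \to X$ with $D'' \in \D$ fails to be split epi by the preliminary observation, hence factors through the sink map $u$ as $h = u h'$, and then $h'$ factors through the right $\D$-approximation $v$ as $h' = v h''$, yielding $h = (uv) h''$.

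There is no substantive obstacle; the argument is a routine two-step factorisation once one has noted that the hypothesis $X \notin \D$ precisely excludes the exceptional split-mono (resp.\ split-epi) case in the almost-split definition of a source (resp.\ sink) map.
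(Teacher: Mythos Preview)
Your proof is correct and is exactly the intended argument; the paper states this lemma without proof (it is recorded in the Examples section merely as a tool for computing approximations), so there is nothing to compare against beyond noting that your two-step factorisation is the standard one. Your identification of the typo in part~(2) is also correct: both occurrences of ``left'' there should read ``right'', since $uv:D'\to X$ is a morphism into $X$ and the dual argument produces a right $\D$-approximation.
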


According to \cite{xiao2005locally}, let $\HT$ be a locally finite triangulated category, then it has Auslander-Reiten triangles, and if the Auslander-Reiten quiver has loops, it is of the form $\hat{L}_n$:
$$\xymatrix{
\ar@(ul,dl)X_1\ar@<0.25em>[r]
&X_2\ar@<0.25em>[l]\ar@<0.25em>[r]
&\cdots\ar@<0.25em>[l]\ar@<0.25em>[r]
&X_n\ar@<0.25em>[l]
}.$$

{ The following examples give Auslander-Reiten ideals which are not Jacobson radicals.}

\begin{exam}\label{exam:exsmple no tri}
Let $\HT$ be a locally finite triangulated $K$-category of type $\hat{L}_2$, that is $\HT$ has the Auslander-Reiten quiver
$$\xymatrix{
\ar@(ul,dl)_{\epsilon}X\ar@<0.25em>[rr]^{\al}
&
&Y\ar@<0.25em>[ll]^{\beta}
}$$
with relations $\al\beta=0$, $\epsilon^2+\beta\al=0$. $\HT$ has Auslander-Reiten triangles
\begin{align*}
   &\xymatrix{
X\ar^{\binom{\epsilon}{\al}\quad} [r]
&X\oplus Y\ar^{\quad(\epsilon, \beta)}[r]
& X\ar@{~>}[r]
&,
}\\
  &\xymatrix{
Y\ar^{\beta}[r]
&X\ar^{\al}[r]
&Y\ar@{~>}[r]
&.
}
\end{align*}

These two Auslander-Reiten triangles gives two homotopy cartesian squares
$$\xymatrix{
X\ar[r]^{\epsilon}\ar[d]_{\alpha}&X\ar[d]^{\epsilon}
\\
Y\ar[r]_{-\beta}&X
}
\quad\quad\quad\text{and}\quad\quad\quad
\xymatrix{
Y\ar[r]^{\beta}\ar[d]&X\ar[d]^{\alpha}
\\
0\ar[r]&Y.
}
$$

In $\HT$, $[1]=\id$ and the $\Hom$ space has $K$-basis
$$\{{\id}_X, {\id}_Y, \epsilon, \epsilon^2, \epsilon^3, \al, \al\epsilon, \beta, \epsilon\beta, \al\epsilon\beta\}.$$

(i)  Let $\I$ be the ideal generated by $\epsilon$. That is $\I =\langle\epsilon, \epsilon^2, \epsilon^3, \al\epsilon, \epsilon\beta, \al\epsilon\beta\rangle$. $\I$ is not the Jacobson radical of $\HT$. $\Gh_\I=\langle\epsilon^3, \al\epsilon, \epsilon\beta, \al\epsilon\beta\rangle=\CoGh_\I$. $\I$ is functorrially finite since $X\stackrel{\epsilon}\to X$ is both a right $\I$-approximation and  a left $\I$-approximation of $X$, $Y\stackrel{\epsilon\beta}\to X$ is a left $\I$-approximation  of $Y$ and $X\stackrel{\al\epsilon}\to Y$ is a right $\I$-approximation of $Y$. The morphisms
    $X\stackrel{\al\epsilon}\longrightarrow Y$ and $Y\stackrel{\epsilon\beta}\longrightarrow X$
   are both $\Gh_\I$-source maps and $\Gh_\I$-sinks maps, so $\I$ is an Auslander-Reiten ideal. Indeed,  the $\I$-mutation triangles are
\begin{align*}
   &  \xymatrix{
X\ar^{\epsilon}[r]
&X\ar^{\al\epsilon}[r]
&Y\ar@{~>}[r]
&,
}\\
  & \xymatrix{
Y\ar^{\epsilon\beta}[r]
&X\ar^{\epsilon}[r]
&X\ar@{~>}[r]
&.}
\end{align*}
These triangles are obtained by the pasting meshes,
$$\xymatrix@!0{
&X\ar[dr]^{\epsilon}&&
\\
X\ar[ur]^{\epsilon}\ar[dr]&&X\ar[dr]^{\alpha}&
\\
&Y\ar[ur]\ar@{-->}[dr]&&Y
\\
&&0\ar@{-->}[ur]&
}\quad \quad\text{and}\quad\quad
\xymatrix@!0{
&&X\ar[dr]^{\epsilon}&
\\
&X\ar[ur]^{\epsilon}\ar[dr]&&X
\\
Y\ar[ur]^{\beta}\ar@{-->}[dr]&&Y\ar[ur]&
\\
&0.\ar@{-->}[ur]&&
}$$

$\I/\J\I=\langle \bar{\epsilon}, \overline{\epsilon\beta}\rangle$, $\I/\I\J=\langle \bar{\epsilon}, \overline{\al\epsilon}\rangle$, and the valued $\I$-mutation quiver is
$$\xymatrix{
\ar@(ul,dl)_{(1, 1)}X\ar@<0.25em>[r]^{(0, 1)}
&Y.\ar@<0.25em>[l]^{(1, 0)}
}$$

(ii) Let $\I$ be { the} ideal generated by $\epsilon^2$, that is  { $\I=\langle \epsilon^2, \epsilon^3\rangle$}.
{  The ideal $\I$} is functorially finite but not an Auslander-Reiten ideal. Note that $\Gh_\I=\langle  \epsilon^2, \epsilon^3, \al, \al\epsilon, \beta, \epsilon\beta, \al\epsilon\beta\rangle$, $\al: X\to Y$ is an $\Gh_\I$-sink map of $Y$, but not a left $\Gh_\I$-approximation of $X$ since $\al\epsilon: X\to Y$ cannot factor through $\al$. Hence $\I$ isn't an Auslander-Reiten ideal.
\end{exam}

\begin{exam}
Let $\HT$ be a locally finite triangulated $K$-category of type $\hat{L}_3$, that is $\HT$ has the Auslander-Reiten quiver
$$\xymatrix{
\ar@(ul,dl)_{\epsilon}X\ar@<0.25em>[rr]^{\al}
&
&Y\ar@<0.25em>[ll]^{\beta}\ar@<0.25em>[rr]^{\gamma}
&
&Z\ar@<0.25em>[ll]^{\delta}
}$$
with relations $\epsilon^2+\beta\al=0$, $\al\beta+\delta\gamma=0$ and $\gamma\delta=0$. $\HT$ has Auslander-Reiten triangles
\begin{align*}
   & \xymatrix{
X\ar^{\binom{\epsilon}{\al}\quad} [r]
&X\oplus Y\ar^{\quad(\epsilon, \beta)}[r]
& X\ar@{~>}[r]
&
}, \\
   & \xymatrix{
Y\ar^{\binom{\beta}{\gamma}\quad}[r]
&X\oplus Z\ar^{\quad(\al, \delta)}[r]
&Y\ar@{~>}[r]
&
}, \\
    &\xymatrix{
Z\ar[r]^\delta
&Y\ar[r]^\gamma
&Z\ar@{~>}[r]
&
}.
\end{align*}
Let $\I$ be the ideal generated by $\epsilon$.
\begin{align*}
  \J = & \langle\epsilon, \epsilon^2, \epsilon^3, \epsilon^4, \epsilon^5, \al, \al\epsilon, \al\epsilon^2, \al\epsilon^3, \gamma\al, \gamma\al\epsilon, \beta, \epsilon\beta, \epsilon^2\beta, \epsilon^3\beta, \al\beta, \al\epsilon\beta, \al\epsilon^3\beta, \\
   &\ga, \gamma\al\epsilon\beta, \beta\delta, \epsilon\beta\delta, \al\epsilon\beta\delta, \gamma\al\epsilon\beta\delta \rangle,\\
  \I = & \langle\epsilon, \epsilon^2, \epsilon^3, \epsilon^4, \epsilon^5, \al\epsilon, \al\epsilon^2, \al\epsilon^3, \gamma\al\epsilon, \epsilon\beta, \epsilon^2\beta, \epsilon^3\beta, \al\epsilon\beta, \al\epsilon^3\beta, \\
   &\gamma\al\epsilon\beta, \epsilon\beta\delta, \al\epsilon\beta\delta, \gamma\al\epsilon\beta\delta \rangle,\\
   \Gh_\I=& \langle\epsilon^5, \al\epsilon^3, \gamma\al\epsilon, \epsilon^3\beta,  \al\epsilon\beta, \al\epsilon^3\beta, \gamma\al\epsilon\beta, \epsilon\beta\delta, \al\epsilon\beta\delta, \gamma\al\epsilon\beta\delta \rangle=\CoGh_\I.
\end{align*}
We remind the reader that $\al\epsilon^3\beta=\delta\gamma\delta\gamma=0$.
By an easy check, $\I$ is functorially finite. And since the morphisms $X\stackrel{\gamma\al\epsilon}\to Z$, $Y\stackrel{\al\epsilon\beta}\to Y$ and $Z\stackrel{\epsilon\beta\delta}\to X$ are both $\Gh_\I$-source maps and $\Gh_\I$-sink maps. Thus, $\I$ is an Auslander-Reiten ideal. Indeed, there are $\I$-mutation triangles:
\begin{align*}
   & \xymatrix{
X\ar^{\epsilon}[r]
&X\ar^{\gamma\al\epsilon}[r]
&Z\ar@{~>}[r]
&
}, \\
   &  \xymatrix{
Y\ar^{\epsilon\beta}[r]
&X\ar^{\al\epsilon}[r]
&Y\ar@{~>}[r]
&
},\\
   & \xymatrix{
Z\ar[r]^{\epsilon\beta\delta}
&X\ar[r]^{\epsilon}
&X\ar@{~>}[r]
&
}.
\end{align*}
These triangles are obtained by the pasting meshes,
$$\xymatrix@!0{
&X\ar[dr]^{\epsilon}&&&
\\
X\ar[ur]^{\epsilon}\ar[dr]&&X\ar[dr]^{\alpha}&&
\\
&Y\ar[ur]\ar[dr]&&Y\ar[dr]^{\gamma}&
\\
&&Z\ar[ur]\ar@{-->}[dr]&&Z
\\
&&&0\ar@{-->}[ur]&
}\text{and}
\xymatrix@!0{
&&X\ar[dr]^{\epsilon}&&
\\
&X\ar[ur]^{\epsilon}\ar[dr]&&X\ar[dr]^{\alpha}&
\\
Y\ar[ur]^{\beta}\ar[dr]&&Y\ar[ur]\ar[dr]&&Y
\\
&Z\ar@{-->}[dr]\ar[ur]&&Z\ar[ur]&
\\
&&0\ar@{-->}[ur]&&
}\text{and}
\xymatrix@!0{
&&&X\ar[dr]^{\epsilon}&
\\
&&X\ar[ur]^{\epsilon}\ar[dr]&&X.
\\
&Y\ar[ur]^{\beta}\ar[dr]&&Y\ar[ur]&
\\
Z\ar@{-->}[dr]\ar[ur]^{\delta}&&Z\ar[ur]&&
\\
&0\ar@{-->}[ur]&&&
}$$
\end{exam}

In the following examples, we draw ideal mutation quivers of triangulated categories concerning a functorially finite $\tau$-stable subcategory.

\begin{exam}
  As in the above example, $\D^b(A\smod)$  has Auslander-Reiten triangles. Indeed, $\D^b(A\smod)$ has the Auslander-Reiten quiver
  $$\xymatrix@!0{
  &&{P_1}\ar[dr]
  &&{P_3[1]}\ar@{-->}[ll]_\tau\ar[dr]
  && {S_2[1]}\ar@{-->}[ll]_\tau\ar[dr]
  &&{S_1[1]}\ar[dr]\ar@{-->}[ll]_\tau
  &&{P_1[2]}\ar@{-->}[ll]_\tau
  \\
  \dots
  &{P_2}\ar[ur]\ar[dr]
  &&{I_2}\ar[ur]\ar[dr]\ar@{-->}[ll]_\tau
  &&{P_2[1]}\ar[ur]\ar[dr]\ar@{-->}[ll]_\tau
  &&{I_2[1]}\ar[dr]\ar[ur]\ar@{-->}[ll]_\tau
  &&{P_2[2]}\ar[ur]\ar@{-->}[ll]_\tau
  &\cdots
  \\
  P_3\ar[ur]
  &&S_2\ar[ur]\ar@{-->}[ll]_\tau
  &&S_1\ar[ur]\ar@{-->}[ll]_\tau
  &&P_1[1]\ar[ur]\ar@{-->}[ll]_\tau
  && P_3[2]\ar@{-->}[ll]_\tau\ar[ur]
  &&.
  }$$
  Set $\D=\{\tau^i P_3~|~i\in\mathbb{Z}\}$, by Corollary~\ref{cor: object mutation}, $[\D]$ is an Auslander-Reiten ideal. The valued $[\D]$-mutation quiver is gluing by the following three quivers, note that the first two  have a common item $P_1[1]$, the latter two have a common item $S_1$ and the first, and the third ones have a common item $P_3[2]$.
  $$\xymatrix@!0{
  &P_2\ar[dr]
  && P_3[1]\ar[dr]\ar@{-->}[ll]_{\tau_\D}
  &&I_2[1]\ar[dr]\ar@{-->}[ll]_{\tau_\D}
  &&P_1[2]\ar@{-->}[ll]_{\tau_\D}\ar[dr]
  &
  \\
  \cdots\ar[ur]
  &&S_2\ar[ur]
  &&P_1[1]\ar[ur]
  &&P_3[2]\ar[ur]
  &&\cdots~,
  }$$
    $$\xymatrix@!0{
  &P_1\ar[dr]
  && P_2[1]\ar[dr]\ar@{-->}[ll]_{\tau_\D}
  &&S_1[1]\ar[dr]\ar@{-->}[ll]_{\tau_\D}
  &
  \\
  \cdots\ar[ur]
  &&S_1\ar[ur]
  &&P_1[1]\ar[ur]
  &&\cdots~,
  }$$
  $$\xymatrix@!0{
  &I_2\ar[dr]
  && S_2[1]\ar[dr]\ar@{-->}[ll]_{\tau_\D}
  &&P_2[2]\ar[dr]\ar@{-->}[ll]_{\tau_\D}
  &
  \\
  \cdots\ar[ur]
  &&S_1\ar[ur]
  &&P_3[2]\ar[ur]
  &&\cdots~.
  }$$
\end{exam}

In the following, we draw an ideal mutation quiver of a cluster category. For a wealth of information on the theory of cluster categories, we refer to \cite{buan2006tilting}.

\begin{exam}
Let $Q$ be the quiver
 $$1\longrightarrow 2\longrightarrow 3\longrightarrow 4.$$
 Its corresponding cluster category $\C_Q$  has the Auslander-Reiten quiver
 $$\xymatrix@!0{
  &&&P_1\ar[dr]&&P_4[1]\ar@{-->}[ll]_\tau\ar[dr]
  && P_4\ar@{-->}[ll]_\tau\ar[dr]&&
  \\
  &&P_2\ar[ur]\ar[dr]&&I_3\ar[ur]\ar[dr]\ar@{-->}[ll]_\tau
  &&P_3[1]\ar[ur]\ar[dr]\ar@{-->}[ll]_\tau&& P_3\ar[dr]\ar@{-->}[ll]_\tau&
  \\
  &P_3\ar[ur]\ar[dr]&&\mathop{}^{2}_{3}\ar[ur]\ar[dr]\ar@{-->}[ll]_\tau
  &&I_2\ar[ur]\ar[dr]\ar@{-->}[ll]_\tau
  &&P_2[1]\ar[ur]\ar[dr]\ar@{-->}[ll]_\tau&& P_2\ar[dr]\ar@{-->}[ll]_\tau
  \\
  P_4\ar[ur]&&S_3\ar[ur]\ar@{-->}[ll]_\tau&&S_2\ar[ur]\ar@{-->}[ll]_\tau
  &&S_1\ar[ur]\ar@{-->}[ll]_\tau&&P_1[1]\ar[ur]\ar@{-->}[ll]_\tau&&P_1\ar@{-->}[ll]_\tau
  }$$
  We set $\D=\{P_1, P_4[1], P_4, S_3, S_2, S_1, P_1[1]\}$, $\I=[\D]$, by Corollary~\ref{cor: object mutation}, $\I$ is an Auslander-Reiten ideal. The valued $\I$-mutation quiver is
   $$\xymatrix@!0{
  &\quad P_1\quad\ar[dr]^{01}
  &&P_4[1]\ar[dr]^{01}
  &&S_3\ar[dr]^{01}
  &&P_4[1]\ar[dr]^{01}
  &&P_4\ar[dr]^{01}
  &&P_1\ar[dr]^{01}
  &&P_4\ar[dr]^{01}
  &
  \\
  P_3\ar[ur]^{10}\ar[dr]_{10}
  &&I_3\ar[ur]^{10}\ar[dr]_{10}\ar@{-->}[ll]_{\tau_\I}
  &&P_2[1]\ar[ur]^{10}\ar[dr]_{10}\ar@{-->}[ll]_{\tau_\I}
  &&\mathop{}^{2}_{3}\ar[ur]^{10}\ar[dr]_{10}\ar@{-->}[ll]_{\tau_\I}
  &&P_3[1]\ar[ur]^{10}\ar[dr]_{10}\ar@{-->}[ll]_{\tau_\I}
  &&P_2\ar[ur]^{10}\ar[dr]_{10}\ar@{-->}[ll]_{\tau_\I}
  &&I_2\ar[ur]^{10}\ar[dr]_{10}\ar@{-->}[ll]_{\tau_\I}
  &&P_3\ar@{-->}[ll]_{\tau_\I}
  \\
   &S_3\ar[ur]_{01}
   &&S_1\ar[ur]_{01}
   &&P_1[1]\ar[ur]_{01}
   &&S_2\ar[ur]_{01}
   &&P_1[1]\ar[ur]_{01}
   &&S_2\ar[ur]_{01}
   &&S_1\ar[ur]_{01}
   &
  }$$
  where the notation $01$ denotes $(0, 1)$ and $10$ denotes $(1, 0)$.
  \end{exam}

Let $\HT$ be a $\Hom$-finite Krull-Schmidt triangulated $K$-category having Auslander-Reiten triangles and $\D$ a functorially finite $\tau$-stable subcategory. Roughly speaking, the Auslander-Reiten quiver $\Gamma({\HT})$ contains all information of morphisms in $\HT$. And the $[\D]$-mutation quiver $\Gamma_{\D}(\HT)$ contains all information of morphisms in $[\D]$. And the Auslander-Reiten quiver $\Gamma({\HT/[\D]})$ of the quotient category $\HT[\D]$, a subquiver of $\Gamma({\C_Q})$ by deleting the vertices corresponding to objects of $\D$ along with the arrows into or out of such vertices (see \cite[Theorem 4.2(ii)]{jorgensen2010quotients}), contains all information of morphisms that are not in $[\D]$. We ask the following question.\\

  {\bf Question:} Can we recover $\Gamma({\HT})$ via $\Gamma_{\D}(\HT)$ and $\Gamma(\HT/[\D])$?\\

{\bf Acknowledgments:}  The authors would like to thank Professor Panyue Zhou and the anonymous reviewer for their careful reading and useful suggestions to improve the article.
\bibliographystyle{plain}

\end{document}